\theoremstyle{plain}
\newtheorem{lemma}{Lemma}
\newtheorem{definition}{Definition}
\newtheorem{corollary}{Corollary}
\newtheorem{theorem}{Theorem}
\newtheoremstyle{derp}
{3pt}
{3pt}
{}
{}
{\upshape}
{:}
{.5em}
{}
\theoremstyle{derp}
\newtheorem{example}{Example}
\newcommand{\R}{\mathbb{R}}
\newcommand{\Q}{\mathbb{Q}}
\newcommand{\Z}{\mathbb{Z}}
\newcommand{\C}{\mathbb{C}}
\newcommand{\N}{\mathbb{N}}
\newcommand{\B}{\mathcal{B}}
\newcommand{\ID}{\mathrm{id}}
\newcommand{\lcm}{\mathrm{lcm}}
\newcommand{\bla}{\mbox{\textvisiblespace}}
\newcommand{\ske}{\mathrm{Sk}}
\newcommand{\loc}[1]{#1_{\mathrm{loc}}}
\newcommand{\OC}[1]{\overline{\mathcal{O}(#1)}}
\title{Toeplitz subshift whose automorphism group is not finitely generated}
\author{
Ville Salo \\ vosalo@utu.fi
}
\begin{document}
\maketitle

\begin{abstract}
We compute an explicit presentation of the (topological) automorphism group of a particular Toeplitz subshift with subquadratic complexity. The automorphism group is a non-finitely generated subgroup of rational numbers, or alternatively the $5$-adic integers, under addition, the shift map corresponding to the rational number 1. The group is
\[ (\langle (5/2)^i \;|\; i \in \N \rangle, +) \leq (\Q, +). \]
\end{abstract}

\section{Introduction}

A subshift is a topologically closed, shift-invariant subset of $S^\Z$, where $S$ is a finite alphabet, and shift-invariance means $\sigma(X) = X$ where $\sigma$ is the left shift map. A (topological) endomorphism of a subshift $X$ is a continuous function $f : X \to X$ that commutes with the shift (often required to be surjective). It is an automorphism if it is also a homeomorphism (equivalently, bijective). With respect to function composition, the endomorphisms form a monoid and the automorphisms form a group, and one can ask what kinds of monoids and groups can appear as endomorphism monoids and automorphism groups of subshifts, and how to compute them for of a given subshift. On an infinite subshift, the shift map always generates a copy of $\Z$ in the automorphism group. By the definition of the automorphism group, the shift maps are included in the center of the group, and thus this copy of $\Z$ is a normal subgroup. Often one is also interested in the automorphism group where the group generated by the shift, the shift group, has been quotiented out.

These monoids and groups have been discussed for many (classes of) subshifts. The automorphism group of the full shift was first discussed in \cite{He69}, where it was shown that it contains an isomorphic copy of every finite group, and a copy of the two-generator free group. These results, and stronger ones, were generalized to all mixing subshifts of finite type in \cite{BoLiRu88}. The study of individual endomorphisms and automorphisms of subshifts -- usually full shifts -- is known as the study of cellular automata. From this theory, we obtain many recursion theoretic results about these algebras. Namely, the local rule of a cellular automaton (that is, an endomorphism) gives a canonical recursive presentation of an element of the group, and for example, there is no algorithm to check whether a given element of the automorphism group has finite order \cite{KaOl08}.

Simpler examples of automorphism groups are given by minimal subshifts. For them, the automorphism group can often be computed explicitly. For symbol-to-word substitutions, the shift group is often of finite index in the automorphism group; in particular the automorphism group is virtually cyclic. Such results are shown in for example \cite{He69,Co71,HoPa89,Ol13,SaTo14,CyKr14a}.

For many classes of minimal subshifts, the automorphism group is not only virtually $\Z$, but $\Z$, that is, precisely the shift group. For example, the result of \cite{Ol13} shows that all Sturmian subshifts have an automorphism group consisting of shift maps only. For many examples, a stronger property called minimal self-joinings \cite{Gl03,JuRaSw80,So14} is known, and it is easy to see that subshifts with this property cannot have automorphisms other than shift maps.


Cyr and Kra show many results about automorphism groups of subshifts of subquadratic and linear growth in \cite{CyKr14,CyKr14a}. In \cite{CyKr14a}, they in particular obtain that a linearly recurrent minimal subshift has a virtually $\Z$ automorphism group, answering the question we asked in \cite{SaTo14}. The same result is shown independently with quite different methods by Donoso, Durand, Maass and Petite \cite{DoDuMaPe14}. We do not yet know whether our proof methods, different from both those of \cite{CyKr14} and \cite{DoDuMaPe14}, can be extended to a third proof of this result. Our method in \cite{SaTo14} is hard to apply in abstract settings, since it exploits the explicit self-similarity -- substitutability and desubstitutability -- of the subshift.\footnote{Of course, there is some hope for this: one can give linearly recurrent subshifts (and, to some extent, minimal subshifts in general) a concrete \emph{S-adic} structure as the image of an infinite chain of substitutions with suitable properties, see for example \cite{Du00}.}

However, as another application of the technique of \cite{SaTo14}, we prove the following theorem (see Theorem~\ref{thm:Main}):

\begin{theorem}
\label{thm:Exists}
There exists a Toeplitz subshift with the automorphism group
\[ (\langle (5/2)^i \;|\; i \in \N \rangle, +) \leq (\Q, +). \]
\end{theorem}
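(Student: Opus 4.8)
The plan is to construct an explicit self-similar Toeplitz subshift $X$ and to compute its automorphism group through the canonical factor map onto its maximal equicontinuous factor. Concretely, I would build $X$ from a hierarchical hole-filling scheme whose periods are the powers $5^n$, so that the maximal equicontinuous factor is the $5$-adic odometer $\Z_5 = \varprojlim \Z/5^n\Z$, with an almost one-to-one factor map $\pi : X \to \Z_5$ under which the shift $\sigma$ corresponds to the translation $x \mapsto x+1$. The alphabet and the pattern of newly filled positions at each level would be chosen with a deliberate two-fold symmetry; this symmetry is the source of the factor of $2$ (and hence of the non-finitely-generated group $\langle (5/2)^i \rangle$, which is concretely the dyadic rationals $\Z[1/2]$ sitting inside $\Z_5$). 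Since a Toeplitz construction with these parameters is automatically minimal, I can freely use minimality throughout.

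Next I would set up the homomorphism $\rho : \mathrm{Aut}(X) \to \Z_5$. Because the factor map $\pi$ onto the maximal equicontinuous factor is canonical, every automorphism $\phi$ descends to a self-homeomorphism $\bar\phi$ of $\Z_5$ commuting with the minimal rotation $+1$; by a standard minimality argument such a map is a translation $+c$, and I set $\rho(\phi)=c$. This $\rho$ sends $\sigma$ to $1$. The first substantial step is to prove $\rho$ is injective: an automorphism in its kernel fixes $\pi^{-1}(z)$ setwise for every $z$, hence is the identity on the dense $G_\delta$ set of points with singleton fibers, and therefore the identity by continuity. Making this rigorous at the exceptional fibers is exactly where the desubstitution technique of \cite{SaTo14} enters — I would recursively desubstitute a point to pin down the behaviour of $\phi$ on the non-injective fibers.

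The heart of the argument is identifying the image of $\rho$. First, that each generator $(5/2)^i$ is realized: since $2$ is a unit in $\Z_5$, the element $5^i/2^i$ is a bona fide point of $\Z_5$, and using the built-in two-fold symmetry at level $i$ I would exhibit an explicit automorphism combining the odometer rotation by $5^i/2^i$ with a local recoding, verifying it is a homeomorphism commuting with $\sigma$; closing under the group operations then puts all of $\langle (5/2)^i \rangle$ in the image. Second, and conversely, that a translation $+c$ lifts to a self-homeomorphism of $X$ only if $c \in \langle (5/2)^i \rangle$: a lift must be compatible with the hole-filling pattern at every level, and I would run an induction over levels (again via desubstitution) showing that the level-$n$ compatibility constraints force $c$ into the subgroup generated by $1, 5/2, \dots, (5/2)^n$. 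Combining injectivity with this determination of the image yields $\mathrm{Aut}(X) \cong \langle (5/2)^i \rangle \leq \Q$, with $\sigma$ corresponding to $1$.

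I expect the main obstacle to be the converse direction of the image computation: proving that no translation outside $\langle (5/2)^i \rangle$ lifts. This is a genuinely combinatorial statement about the filling pattern, and the difficulty is to show that the level-by-level constraints are both necessary and that they propagate coherently under desubstitution, rather than accidentally loosening at some level. A secondary technical point is to verify that the automorphisms realizing $5^i/2^i$ are invertible with continuous inverse and genuinely commute with the shift, and to handle the exceptional (non-injective) fibers of $\pi$ carefully enough that both injectivity of $\rho$ and the realizability claims hold without exception.
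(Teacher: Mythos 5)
Your overall framework---computing $\mathrm{Aut}(X)$ via the translations it induces on the maximal equicontinuous factor $\Z_5$---is legitimate, and it is in fact the classical route (Coven, Olli, Donoso--Durand--Maass--Petite) that the paper deliberately does \emph{not} take: the paper never passes to the odometer, but works inside the subshift with the $n$th order shift maps $\sigma_i = {\downarrow^i \sigma}$ and a radius-reduction induction. Your injectivity step is fine, and simpler than you suggest: an automorphism inducing the identity preserves every fiber, hence fixes a point with singleton fiber, hence fixes its dense orbit, hence is the identity by continuity; no desubstitution is needed there. Your identification of the group as $\Z[1/2] \leq \Z_5$ is also correct. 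However, your source of the factor $2$ is wrong: a literal ``two-fold symmetry'' of the alphabet or filling pattern would produce a torsion element of $\mathrm{Aut}(X)$, which is impossible since $\langle (5/2)^i \rangle \cong \Z[1/2]$ is torsion-free; the paper's standing hypotheses demand the exact opposite, namely that $\ID_X$ is the \emph{only} symbol map on $X$. The $2$ comes instead from the density of holes: the filling word $w = 1\bla 0\bla 0$ has $|w|_{\bla} = q = 2$ holes per period $p = 5$, and the relation $\sigma^{p} \circ \psi_w = \psi_w \circ \sigma^{q}$ of Lemma~\ref{lem:PhiContinuity} gives $\sigma_1^2 = \sigma^5$, i.e.\ $\sigma_1 \mapsto 5/2$; it is a speed ratio under desubstitution, not a symmetry.

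The more serious gap is in the converse direction, which you rightly call the heart of the matter but for which you propose no working mechanism. The inductive invariant ``the level-$n$ constraints force $c$ into $\langle 1, 5/2, \dots, (5/2)^n \rangle$'' cannot be right as stated: these subgroups \emph{increase} with $n$ and their union is the whole candidate group, so level-by-level compatibility by itself excludes no $c \in \Z_5$ whatsoever---every $5$-adic integer has a formal, generally non-terminating expansion $c = \sum_j k_j (5/2)^j$ produced by exactly the desubstitution recursion you describe. What must be proved is that for a \emph{continuous} lift this recursion terminates at a finite level in an honest power of the shift, and that needs a quantitative input absent from your plan. The paper's mechanism is that desubstitution contracts the radius: an endomorphism of radius $R$ desubstitutes to one of radius about $\frac{q}{p}R + b = \frac{2}{5}R + b$ (Lemma~\ref{lem:InductionStep}), so after finitely many steps the radius falls below a constant; then pigeonhole gives $h_n = h_{n+m}$, and Lemma~\ref{lem:Recursion} (using the sparsity of high-level holes, Lemma~\ref{lem:Sparse}, together with the no-nontrivial-symbol-map hypothesis) forces the stabilized map to be a shift. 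A second omission: the desubstituted map a priori commutes only with $\sigma^2$, not with $\sigma$, since the two hole-phases could behave differently; the paper spends Theorem~\ref{thm:AllAboutPerp} (independence of $X$ from $\Z_2$, via coprimality of $2$ and $5$) on precisely this point. In your framework this one is repairable---$(X, \sigma^2)$ is again minimal and almost 1-1 over the odometer, so the two phase-conjugates induce the same translation and must coincide---but it has to be addressed before the induction can even be set up.
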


This is the additive subgroup of the rationals generated by the powers of $5/2$. What is special about this subshift is that the automorphism group is not finitely generated. While many examples of explicitly computed automorphism groups of minimal subshifts are known, we are not aware of ones that are not virtually cyclic (and thus in particular finitely generated). To prove Theorem~\ref{thm:Exists}, we exploit the fact that a self-similar subshift has a self-similar automorphism group. More precisely, if the self-similarity of a subshift is somehow dictated by a local rule, then we can (sometimes) conjugate automorphisms by this local rule, and reduce their neighborhood (the set of cells the local rule looks at). Characterizing the group then reduces to characterizing the automorphisms of small neighborhood size (in our case, size 1), and how they behave when conjugated by the self-similarity. In \cite{SaTo14}, we had to perform the argument in an extension of the automorphism group; here, the argument can be performed completely within the automorphism group.

Another observation of interest is that our subshift has subquadratic complexity: the number of words of length $n$ that appear in the points of the subshift is $O(n^{1.757})$. It is shown in \cite{CyKr14} that in any transitive system, subquadratic complexity implies that the automorphism group divided by the shift group is periodic, that is, every element has finite order. A natural question, though not discussed in \cite{CyKr14}, is whether one can construct transitive systems where the group divided by the shift group is still infinite. In fact, it is easy to construct such systems, and a better question is whether one can make them minimal. This is explicitly asked in \cite{DoDuMaPe14}, and our example answers this in the positive.

In the case of minimal subshifts, endomorphisms are automatically surjective, and an interesting question is whether they are in fact always injective as well. If this is the case, then the subshift is called coalescent. In \cite{Do97}, an example of a Toeplitz subshift which is non-coalescent is given. Our example is coalescent, which means that our example of an automorphism group is also an example of an endomorphism monoid. In \cite{Su84}, an example of a Toeplitz subshift is constructed which is not uniquely ergodic. It is well-known that a regular Toeplitz subshift is uniquely ergodic, and it is easy to verify that ours is regular. Thus, our example is uniquely ergodic.

Most of this work appeared in the PhD thesis of the author \cite{Sa14}.

\section{Preliminaries}

A \emph{dynamical system} is a pair $(X, T)$ where $X$ is a compact metric space and $T : X \to X$ is a continuous homeomorphism. A \emph{morphism} between dynamical systems $(X, T)$ and $(Y, T')$ is a continuous map $f : X \to Y$ satisfying $f \circ T = T' \circ f$. A morphism $f : X \to Y$ that is surjective is called a \emph{factor map}, and then $Y$ is called a \emph{factor} of $X$. If $f$ is also injective (and thus a homeomorphism) it is called a \emph{conjugacy}, and $X$ and $Y$ are said to be \emph{conjugate}. A dynamical system $(X, T)$ is \emph{minimal} if it contains no proper subsystem $Y \subsetneq X$ with $T(Y) = Y$ other than $Y = \emptyset$. It is \emph{transitive} if for any nonempty open sets $U, V \subset X$ we have $T^i(U) \cap V \neq \emptyset$ for some $i \in \Z$. A minimal dynamical system is transitive.

An \emph{endomorphism} of a dynamical system $(X, T)$ is a morphism $f : X \to X$, and an endomorphism that is homeomorphic is called an \emph{automorphism}. The endomorphisms form a monoid and the automorphisms form a group, with respect to function composition. The functions $\{T^i \;|\; i \in \Z\}$ form a subgroup of the automorphism group, called the \emph{shift group}.

The symbol $S$ denotes a finite alphabet with the discrete topology. By $S^\Z$ we mean the set of two-way infinite sequences, called \emph{points}. We write $S^*$ for the set of ($0$-indexed) finite words over $S$, including the empty word. Our indexing conventions are $x_i$ for the symbol $x(i)$ in the $i$th coordinate of $x$, and $x_{[j,k]}$ for the subword $x_j x_{j+1} \cdots x_k$, and in this notation, the topology of $S^\Z$ is given by the metric $d(x, y) = \inf \{2^{-i} \;|\; x_{[-i,i]} = y_{[-i,i]}\}$. We use similar conventions for indexing words. By $|v|_u$ we denote the number of (possibly overlapping) occurrences of $u$ in $v$, and $u \sqsubset v$ means $v_{[i,i+|u|-1]} = u$ for some $i$. When $u \in S^*$, we write $u^\Z = x$ where $x_i = u_{i \bmod |u|}$ for all $i \in \Z$.

The \emph{shift map} $\sigma : S^\Z \to S^\Z$ is defined by $x_i = x_{i+1}$. A \emph{subshift} is a topologically closed subset $X$ of $S^\Z$ which is \emph{shift-invariant} in the sense that $\sigma(X) = X$. Then $(X, \sigma)$ is a dynamical system. For a point $x \in S^\Z$, we write $\mathcal{O}(x) = \{\sigma^i(x) \;|\; i \in \Z\}$ for the \emph{orbit} of $x$. For any point $x \in S^\Z$, the \emph{orbit closure} $\OC{x}$ is a subshift. A point $x \in S^\Z$ is \emph{uniformly recurrent} if for each $n$ there exists $m$ such that $x_{[-n,n]} \sqsubset x_{[i-m,i+m]}$ for all $i \in \Z$. The orbit closure of a uniformly recurrent point is minimal.

A \emph{Toeplitz point} is a point $x \in S^\Z$ such that for all coordinates $i$ there exists a \emph{period} $p > 0$ such that $x_{i+kp} = x_i$ for all $k \in \Z$. Then, for all intervals $[j,j']$, we also find $p > 0$ such that $x_{[j,j'] + kp} = x_{[j,j']}$ for all $k \in \Z$, and we similarly say $p$ is the period of $[j, j']$ in $x$. A Toeplitz point is clearly uniformly recurrent, so the subshift $X \subset S^\Z$ it generates is minimal. A \emph{Toeplitz subshift} is any subshift generated by a Toeplitz point. Not every point in a Toeplitz subshift need be Toeplitz, but the Toeplitz points are necessarily dense, since the orbit of the point generating the subshift is dense, and contains only Toeplitz points.

Note that periodic points $u^\Z$ are Toeplitz with our definition, and they generate Toeplitz subshifts conjugate to finite systems $(\Z_n, (x \mapsto x+1))$. In fact, these finite systems are important in what follows. Thus, we will consider $\Z_n$ a dynamical system with dynamics $x \mapsto x+1$. These are precisely the finite minimal dynamical systems. We will also discuss them with trivial dynamics, $(\Z_n, \ID_{\Z_n})$ (in which case they are of course not subshifts if $n > 1$).

Every morphism $f : X \to Y$ between subshifts $X, Y \subset S^\Z$ has a \emph{radius} $R \in \N$ and a \emph{local rule} $\loc{f} : S^{[-R,R]} \to S$ such that $f(x)_i = \loc{f}(x_{[i-R,i+R]})$ for all $x \in X$, $i \in \Z$. A morphism is said to have \emph{one-sided radius} $R$ and \emph{one-sided local rule} $\loc{f} : S^{[0,R]} \to S$ if $f(x)_i = \loc{f}(x_{[i,i+R]})$ for all $x$ and $i$. If $f$ has radius $R$, then $\sigma^R \circ f$ has one-sided radius $2R$.

\section{Toeplitz subshifts and independence}

In this section, we also define the concept of independence, and characterize it for Toeplitz subshifts in Theorem~\ref{thm:AllAboutPerp}. In the following sections, we mainly need that disjointness of Toeplitz systems implies mutually independence.

We begin with a general discussion of Toeplitz subshifts. Our methods are very elementary, and we do not need much of the theory of Toeplitz subshifts -- in particular, while the discussion below is strongly related to the maximal equicontinuous factor of a Toeplitz subshift, we will not discuss this factor explicitly, but only its finite factors.\footnote{The maximal equicontinuous factor of a Toeplitz subshift is just an inverse limit of the finite factors, so the difference is small.} The lemmas that follow can be extracted from, or found directly in, any reference that discusses the maximal equicontinuous factor of a Toeplitz subshift \cite{Ku03,Wi84,Do05}, but we give a self-contained presentation with a more combinatorial point of view.

\begin{definition}
Let $x \in S^\Z$ be arbitrary. For each $k > 0$, define the \emph{$k$-skeleton of $x$} as the point
\[ \ske(k, x)_i = \left\{\begin{array}{ll}
x_i,  & \mbox{if } \forall m \in \Z: x_{i + mk} = x_i, \mbox{ and} \\
\bla, & \mbox{otherwise.}
\end{array}\right. \]
The number $k$ is an \emph{essential period} of $x$ if $\sigma^\ell(\ske(k, x)) \neq \ske(k, x)$ for all $0 < \ell < k$.
\end{definition}

We note that since $\ske(k, x)$ necessarily has period $k$, the condition $\sigma^\ell(\ske(k, x)) \neq \ske(k, x)$ for all $0 < \ell < k$ means exactly $|\OC{\ske(k, x)}| = k$.

The $k$-skeleton of $x$ contains the coordinates having period $k$, and the least period of a coordinate $i$ in $x$ is the least $k$ such that $\ske(k, x)_i \neq \bla$. It is easy to see that $x$ is Toeplitz if and only if $\lim_{k \rightarrow \infty} \ske(k!, x) = x$.

We show that while the definition talks about global periods, one can detect the periods locally in Toeplitz subshifts. This is well-known, see for example Proposition~4.71 in \cite{Ku03}.

\begin{lemma}
\label{lem:ContinuityThing}
Let $X \in S^\Z$ be a Toeplitz subshift. Then for all $k > 0$, there exists $m \geq 0$ such that for all $y \in X$, we have $y_j = y_{j+k} = \cdots = y_{j+(m-1)k}$ if and only if $\ske(k, y)_j = y_j$. In particular, the function $f : X \to (S \cup \{\bla\})^\Z$ defined by $f(y) = \ske(k, y)$ is a block map.
\end{lemma}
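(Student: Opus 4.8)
The plan is to prove the equivalence by showing each direction, with the forward (necessity) direction being the substantive one. For the easy direction, suppose $\ske(k,y)_j = y_j$; then by definition of the skeleton, $y_j$ has period $k$, so in particular $y_{j} = y_{j+k} = \cdots = y_{j+(m-1)k}$ holds for \emph{every} $m$, and certainly for the $m$ we eventually fix. The real content is the converse: finding a single $m$ that works uniformly over all $y \in X$, so that seeing $m$ consecutive agreements along the arithmetic progression with step $k$ forces global periodicity of that coordinate. Once both directions are established for a fixed $k$, the ``in particular'' claim is immediate, since whether $\ske(k,y)_j = y_j$ then depends only on the window $y_{[j, j+(m-1)k]}$, giving a local rule of radius $(m-1)k$ (after an appropriate shift), hence $f$ is a block map.

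The heart of the argument is a compactness/minimality argument to produce the uniform $m$. First I would fix a Toeplitz point $x$ generating $X$, which exists by definition of a Toeplitz subshift. I would record the key structural fact: a coordinate $j$ has period $k$ in $y$ (i.e.\ $\ske(k,y)_j = y_j$) exactly when the whole arithmetic progression $(y_{j+nk})_{n \in \Z}$ is constant. The obstacle is that a \emph{finite} stretch of agreement does not by itself imply global agreement for an arbitrary point; I need to use that $X$ is generated by a Toeplitz point and is minimal. The natural approach is: suppose for contradiction that no uniform $m$ works. Then for every $m$ there is a point $y^{(m)} \in X$ and a coordinate $j_m$ with $y^{(m)}_{j_m} = y^{(m)}_{j_m + k} = \cdots = y^{(m)}_{j_m + (m-1)k}$ but $\ske(k, y^{(m)})_{j_m} = \bla$, meaning the progression is \emph{not} globally constant. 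After translating so that $j_m = 0$ (using shift-invariance of $X$) and passing to a convergent subsequence by compactness of $X$, I obtain a limit point $z \in X$ with $z_0 = z_k = z_{2k} = \cdots$ agreeing on an arbitrarily long (hence, in the limit, one-sided infinite) progression, yet I want to derive a contradiction with the fact that the coordinate $0$ still fails to have global period $k$.

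The subtlety I expect to be the main obstacle is closing this contradiction cleanly, because a one-sided infinite agreement $z_0 = z_k = z_{2k} = \cdots$ does not immediately give the two-sided agreement defining the skeleton. To handle this I would lean on the local detectability of periods in the generating Toeplitz point $x$: for the fixed $k$, the set of coordinates of $x$ having least period dividing $k$ is itself syndetic or can be understood via the essential-period structure, and by uniform recurrence of $x$ every sufficiently long window of any $y \in X$ is a subword of $x$. Concretely, I would argue that a long enough run of agreements along step $k$ can only occur inside $x$ at coordinates that genuinely have period $k$, because at a coordinate where the period fails, the Toeplitz structure forces a \emph{disagreement} within a bounded distance along the progression; taking $m$ larger than this bound yields the equivalence. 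Thus the proof reduces to extracting, from the Toeplitz property of $x$ and uniform recurrence, a bound $M_k$ beyond which $k$-periodic agreement on $M_k$ consecutive terms of the progression cannot ``accidentally'' hold at a non-$k$-periodic coordinate; setting $m = M_k$ completes the argument and simultaneously exhibits the local rule.
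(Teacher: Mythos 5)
Your plan ends up converging on the paper's own approach: you (correctly) discard the pure compactness argument --- the condition $\ske(k,y)_j = \bla$ is not a closed condition, so it does not survive passage to a limit point, and indeed the lemma is false for general subshifts, so no argument that ignores the Toeplitz structure can succeed --- and you then propose to transfer a long run of agreements from an arbitrary $y \in X$ into the generating Toeplitz point $x$ (which works simply because $y \in \OC{x}$ means every finite window of $y$ occurs in $x$; uniform recurrence is not actually needed for this direction), and finally to argue inside $x$ that a coordinate whose progression with step $k$ is non-constant must see a disagreement within a bounded number of steps along that progression. The problem is that this last claim, which is the entire technical heart of the lemma, is asserted rather than proved: you write that ``the Toeplitz structure forces a disagreement within a bounded distance along the progression'' and that ``the proof reduces to extracting \ldots a bound $M_k$'', but you never extract it. As written, the proposal is a correct plan whose final step is exactly the statement to be shown.

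Here is the two-step mechanism you are missing, which is essentially all of the paper's proof. First, fix a residue $h \in [0,k-1]$ whose progression $h + \Z k$ is not constant in $x$, and pick witnesses $x_h = c \neq b = x_{h+\ell k}$. By the Toeplitz property the \emph{pair} of coordinates $\{h, h+\ell k\}$ has a common period $p$, so $x_{h+np} = c$ and $x_{h+\ell k+np} = b$ for all $n \in \Z$. Since $\lcm(p,k)$ is a multiple of both $p$ and $k$, along the progression $h + \Z k$ the value $c$ is forced to recur with period $\lcm(p,k)$ at one phase and the value $b$ at another; hence any $\lcm(p,k)+\ell$ consecutive terms of the progression contain coordinates forced to carry both values, i.e., a disagreement. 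Second --- and this is the uniformity point your sketch also skips --- the bound just obtained depends only on the residue class $h$, and the progression $j + \Z k$ through an arbitrary coordinate $j$ coincides, as a set, with the progression through $j \bmod k$; since there are only $k$ residue classes, taking $m$ larger than the maximum of these finitely many per-class bounds yields a single $m$ valid for all coordinates of $x$, and then (via the transfer step) for all $y \in X$. Without this finiteness observation, your per-coordinate bound could a priori grow without control over the infinitely many coordinates, and no uniform $m$ would follow. Your side remark that the coordinates of least period dividing $k$ form a syndetic set, or that one should invoke the essential-period structure, is not needed and does not substitute for the argument above.
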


\begin{proof}
Let $x \in X$ be Toeplitz. We claim that there exists $m$ such that in any point $y \in \OC{x} = X$, for all $h \in [0, k-1]$ either
\[ \forall \ell \in \Z: y_{h+\ell k} = y_h, \]
that is, the arithmetic progression $h + \Z k$ is constant, or
\[ \forall \ell: \exists j \in [1, m-1]: y_{h + \ell k} \neq y_{h+(\ell+j)k}, \]
that is, the arithmetic progression $h + \Z k$ has a syndetic set of coordinates where the symbol changes.

Suppose that this is not the case. Then for all $m$ we find $y\in \OC{x}$ such that
\[ y_{h + \ell k} = y_{h + (\ell + 1) k} = \cdots = y_{h + (\ell + m - 1) k} \neq y_{h + (\ell + m) k} \]
for some $\ell$ and $h$. Since $y \in \OC{x}$, we in particular find $j_m \in \Z$ such that
\[ a = x_{j_m} = x_{j_m+k} = \cdots = x_{j_m+(m-1)k} \neq x_{j_m + mk}. \]

We show that this cannot happen for arbitrarily large $m$. Namely, consider such $j_m$ modulo $k$. For $h \in [0, k-1]$, if $x_{h+\ell k} = a$ for all $\ell$, then we cannot have $j_m = h \bmod k$ for any $m$. Otherwise, $x_{h+\ell k} = b \neq c = x_h$ for some $\ell$. Let $p$ be a period for $\{h, h+\ell k\}$ in $x$, so that $x_{h+np} = c$ and $x_{h+\ell k+np} = b$ for all $n$. Then if $m > \lcm(p, k) + \ell$, we cannot have $j_m = h \bmod k$. This rules out all values of $j_m$ modulo $h$, which is a contradiction.

This shows that $f$ is a block map: $f(y)_j = y_j$ if $y_j = y_{j+k} = \cdots = y_{j+mk}$, and $f(y)_j = \bla$ otherwise.
\end{proof}

Thus, in a Toeplitz point $x$, and also the points in its orbit closure, we can locally detect skeletons, and the local detection rule is simply to check that the cell has a particular period for some fixed amount of steps. Since $k$ is an essential period of $x$ if and only if $|\OC{\ske(k, x)}| = k$, we have some obvious corollaries.

\begin{lemma}
Let $x, y \in S^\Z$ be Toeplitz points that generate the same subshift $X$. Then the essential periods of $x$ are the same as those of $y$.
\end{lemma}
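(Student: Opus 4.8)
The plan is to exploit the fact, established in Lemma~\ref{lem:ContinuityThing}, that for each fixed $k > 0$ the skeleton map $\phi_k : X \to (S \cup \{\bla\})^\Z$ sending $z \mapsto \ske(k, z)$ is a block map on the \emph{whole} subshift $X$, not merely on its Toeplitz points. In particular $\phi_k$ is continuous and commutes with $\sigma$, so it is a factor map onto its image $\phi_k(X)$. The key observation I would record is that a factor map carries orbit closures to orbit closures: since $X$ is compact and $\phi_k$ is continuous and shift-commuting, $\phi_k(\OC{z}) = \OC{\phi_k(z)}$ for every $z \in X$.

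Applying this with $z = x$ and using the hypothesis $X = \OC{x}$ gives $\phi_k(X) = \OC{\ske(k, x)}$, and applying it with $z = y$ and using $X = \OC{y}$ gives $\phi_k(X) = \OC{\ske(k, y)}$. Hence for every $k > 0$ the two orbit closures coincide:
\[ \OC{\ske(k, x)} = \phi_k(X) = \OC{\ske(k, y)}. \]
In particular $|\OC{\ske(k, x)}| = |\OC{\ske(k, y)}|$.

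Finally, I would invoke the characterization recorded just after the definition of essential period: since $\ske(k, z)$ always has period $k$, the number $k$ is an essential period of $z$ precisely when $|\OC{\ske(k, z)}| = k$. Combined with the equality of cardinalities above, $k$ is an essential period of $x$ iff $|\OC{\ske(k, x)}| = k$ iff $|\OC{\ske(k, y)}| = k$ iff $k$ is an essential period of $y$, which is exactly the claim.

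Given the previous lemma, the argument is essentially immediate, so there is no real obstacle; the only point needing a (routine) verification is the orbit-closure identity $\phi_k(\OC{z}) = \OC{\phi_k(z)}$. One inclusion follows because $\phi_k(X)$ is a closed, $\sigma$-invariant set containing $\phi_k(z)$ and hence contains $\OC{\phi_k(z)}$; the reverse follows because every point of $\OC{z}$ is a limit of shifts of $z$, and continuity together with shift-commutation pushes such limits into $\OC{\phi_k(z)}$.
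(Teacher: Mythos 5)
Your proposal is correct and follows essentially the same route as the paper: both apply the block map $f(z) = \ske(k,z)$ from Lemma~\ref{lem:ContinuityThing}, use the identity $f(\OC{z}) = \OC{f(z)}$ together with $\OC{x} = \OC{y} = X$, and conclude via the characterization that $k$ is an essential period of $z$ exactly when $|\OC{\ske(k,z)}| = k$. The only difference is cosmetic: you spell out the verification of the orbit-closure identity, which the paper uses implicitly in its chain of equalities.
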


\begin{proof}
Since the points generate the same subshift, in particular $y \in \OC{x}$. For $k$ and $x$, let $f$ be the block map $f$ from the previous lemma. We have
\[ |\OC{\ske(k, y)}| = |\OC{f(y)}| = |f(\OC{y})| = |f(\OC{x})| = |\OC{f(x)}| = |\OC{\ske(k, x)}|, \]
so $k$ is an essential period of $y$ if and only if it is one of $x$.
\end{proof}

The following is a trivial case of Theorem~1.3 from \cite{Do05}.

\begin{lemma}
\label{lem:FiniteFactors}
Let $X$ be a Toeplitz subshift. Then $\Z_n$ is a factor of $X$ is and only if $n | k$ for some essential period $k$ of~$X$.
\end{lemma}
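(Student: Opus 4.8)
The plan is to prove the two implications separately, keeping a fixed Toeplitz point $x$ generating $X$, writing $d_i$ for the least period of the coordinate $i$ in $x$, and using the characterization recalled in the excerpt that $k$ is essential exactly when $|\OC{\ske(k,x)}| = k$. For the direction assuming $n \mid k$ with $k$ essential, I would argue that by Lemma~\ref{lem:ContinuityThing} the map $y \mapsto \ske(k,y)$ is a block map $X \to (S\cup\{\bla\})^\Z$; being continuous and shift-commuting on the minimal system $X = \OC{x}$, its image is $\OC{\ske(k,x)}$, so it is a factor map onto this orbit closure. Since $k$ is essential, $|\OC{\ske(k,x)}| = k$, and as the orbit closure of a point of least period $k$ it is conjugate to $(\Z_k, x \mapsto x+1)$. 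Reduction modulo $n$ is a factor map $(\Z_k,+1) \to (\Z_n,+1)$ whenever $n \mid k$, and composing the two exhibits $\Z_n$ as a factor of $X$.

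For the converse, suppose $\phi : X \to \Z_n$ is a factor map. Since $\Z_n$ is finite and discrete, $\phi$ is locally constant, hence has some radius $R$, so that $\phi(y) = g(y_{[-R,R]})$ for a suitable $g$. I would let $p$ be the least period of the window $[-R,R]$ in $x$. Because $x$ is Toeplitz, the condition $x_{[-R,R]+mp} = x_{[-R,R]}$ for all $m$ holds precisely when $d_i \mid p$ for every $i \in [-R,R]$, so in fact $p = \lcm_{i \in [-R,R]} d_i$. From $x_{[-R,R]+p} = x_{[-R,R]}$ I get $\phi(\sigma^p x) = g(x_{[-R,R]+p}) = g(x_{[-R,R]}) = \phi(x)$, while the factor property gives $\phi(\sigma^p x) = \phi(x) + p$ in $\Z_n$, forcing $n \mid p$. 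The remaining and decisive step is to show that this $p$ is itself essential: letting $e$ be the least period of $\ske(p,x)$, so $e \mid p$, every $i \in [-R,R]$ has $d_i \mid p$ and is thus a filled coordinate of $\ske(p,x)$; since $\ske(p,x)$ has period $e$, these filled coordinates satisfy $x_{i+me} = x_i$ for all $m$, i.e. $d_i \mid e$. Hence $p = \lcm_{i\in[-R,R]} d_i \mid e$, and with $e \mid p$ this yields $e = p$, so $|\OC{\ske(p,x)}| = p$ and $p$ is essential, giving an essential period divisible by $n$.

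The easy direction is routine once one notes that the skeleton block map is genuinely surjective onto the relevant orbit closure, which is automatic for block maps on a minimal system. The real obstacle lies in the converse, and specifically in the claim that the least window period $p$ is essential rather than merely a multiple of some essential period. The naive move of extracting an essential period dividing $p$ from its skeleton only delivers $n \mid p$ together with an unrelated essential period, with no divisibility linking them; the argument is saved by the identification $p = \lcm_{i\in[-R,R]} d_i$, which shows the least period of $\ske(p,x)$ cannot drop below this common multiple and therefore equals $p$. I expect that pinning down this equality, and in particular the claim that the window period equals the least common multiple of the coordinate periods, is the one place where care is needed.
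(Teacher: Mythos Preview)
Your proof is correct. The forward direction matches the paper's argument exactly.

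For the converse the paper takes a slightly different route: it fixes a word $w$ with $\phi(y)=0$ whenever $y_{[0,|w|-1]}=w$, lets $k$ be the \emph{minimal} integer such that $w$ occurs somewhere in $\ske(k,x)$, and argues that any smaller period $\ell$ of $\ske(k,x)$ would force $w$ to already appear in $\ske(\ell,x)$, contradicting minimality of $k$. Your approach instead takes the least period $p$ of the entire radius window at the origin, identifies it as $\lcm_{i\in[-R,R]} d_i$, and shows directly that the least period of $\ske(p,x)$ cannot drop below this lcm. Both arguments rest on the same underlying observation --- that a period $e$ of $\ske(p,x)$ forces every filled coordinate to have period $e$ in $x$ --- but they package it differently. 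Your version has the pleasant side effect of proving, en route, exactly what the paper later isolates as Lemma~\ref{lem:LeastPeriodThing} (the least period of any interval in a Toeplitz point is essential), so your argument is in a sense more self-contained. The paper's choice of $k$ may produce a smaller essential period than your $p$, but that extra sharpness is not needed for the statement.
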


\begin{proof}
Let $x \in S^\Z$ be a Toeplitz point generating $X$. If $k$ is an essential period, then $\OC{\ske(k, x)}| = k$. Clearly, $\OC{\ske(k, x)}$ is then conjugate to $\Z_k$, so also $\Z_n$ is a factor of $X$ for all $n | k$.

Next, suppose $f : X \to \Z_n$ is a factor map. We may assume it is one-sided and $f(x) = 0$. By continuity, $y_{[0, r]}$ determines the image $f(y)$ for $y \in X$. Let $w$ be such that $f(y) = 0$ whenever $y_{[0,|w|-1]} = w$. Then $w$ occurs at $x_{[i, i+|w|-1]}$ only if $i \equiv 0 \bmod n$. Let $k$ be minimal such that $w$ occurs in $\ske(k, x)$. Then $k$ is an essential period because $\sigma^{\ell}(\ske(k, x)) = \ske(k, x)$ implies that there is an $\ell$-periodic subsequence of $\Z$ where $x$ contains only occurrences of $w$, so that $w$ occurs in $\ske(\ell, x)$. We have $n | k$ because $w$ occurring in $\ske(k, x)$ implies that there is a $k$-periodic subsequence of $\Z$ where $x$ contains only occurrences of $w$, and $f(y) = 0$ if $y_{[0,|w|-1]} = w$.
\end{proof}

By the previous lemma, the finite factors of a Toeplitz subshift give all the information about the periodic structure of the generating point $x$. Usually, this information is organized into the maximal equicontinuous factor,\footnote{In the spectral theory point of view, it is collected into the set of \emph{eigenvalues}: For example, we can say $\lambda \in \C$ is an \emph{eigenvalue} of $X$, if there is a continuous function $\phi : X \to \C \setminus \{0\}$, called an \emph{eigenfunction} for $\lambda$, with $\phi(\sigma(x)) = \lambda \phi(x)$. Then, in the Toeplitz case, one can show that any eigenvalue $\lambda$ must be an $n$th root of unity where $\Z_n$ is a finite factor, and conversely if $\Z_n$ is a finite factor, we can find an eigenfunction for $\lambda = e^{2\pi/n}$.} based on the following lemma.

\begin{lemma}
\label{lem:PeriodicStructure}
Let $x \in S^\Z$ be Toeplitz and aperiodic. Then there exists a sequence $n_1, n_2, \ldots$ of essential periods of $x$ such that $n_1 > 1$, for all $i$ we have $n_i < n_{i+1}$ and $n_i | n_{i+1}$, and $\lim_i \ske(n_i, x) = x$.
\end{lemma}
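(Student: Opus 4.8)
The plan is to start from the one fact about Toeplitz points already isolated in the text, namely that $\lim_{k \to \infty} \ske(k!, x) = x$, and to massage these factorial levels into a chain of essential periods having the required divisibility and growth. Set $y_k = \ske(k!, x)$; each $y_k$ is a periodic point, and I write $m_k$ for its least period, so $m_k \mid k!$. The whole proof then consists of (i) replacing the factorial level $k!$ by the essential period $m_k$ without changing the skeleton, (ii) proving the divisibility $m_k \mid m_{k+1}$, and (iii) extracting a strictly increasing subsequence using aperiodicity.

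The first step is a short sublemma: for any $K$, if $\ell$ is the least period of the point $\ske(K, x)$, then $\ske(\ell, x) = \ske(K, x)$ and $\ell$ is an essential period of $x$. Indeed $\ell \mid K$, so every coordinate of period $\ell$ already has period $K$, giving $\mathrm{supp}(\ske(\ell,x)) \subseteq \mathrm{supp}(\ske(K,x))$ with matching symbols; conversely, if $i$ lies in the support of $\ske(K,x)$ then, since that skeleton has period $\ell$, all coordinates $i + t\ell$ are non-blank and equal to $x_i$, so $x_{i+t\ell} = x_i$ for every $t$ and $i$ has period $\ell$. Hence the two skeletons coincide, and as $\ell$ is the least period of $\ske(\ell, x)$, the remark after the skeleton definition ($|\OC{\ske(\ell,x)}| = \ell$) says precisely that $\ell$ is essential. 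Applying this with $K = k!$ makes each $m_k$ an essential period with $\ske(m_k, x) = y_k \to x$.

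The heart of the matter, and the step I expect to be the main obstacle, is the divisibility $m_k \mid m_{k+1}$. Here I only know that the supports nest, $\mathrm{supp}(y_k) \subseteq \mathrm{supp}(y_{k+1})$ (because $k! \mid (k+1)!$), and support-nesting by itself does not force divisibility of least periods; the extra input is that the nested symbols are genuine values of $x$. I handle this by a $\gcd$ computation: write $a = m_k$, $b = m_{k+1}$, and $g = \gcd(a,b) = sa + tb$, and take any $i \in \mathrm{supp}(y_k)$. Since $i$ has period $a$ and, being in $\mathrm{supp}(y_k) \subseteq \mathrm{supp}(y_{k+1})$, also period $b$, and since $\mathrm{supp}(y_k)$ is $a$-periodic while $\mathrm{supp}(y_{k+1})$ is $b$-periodic, one checks $x_{i + ra + r'b} = x_i$ for all $r,r'$ by moving first along the $a$-progression (staying in $\mathrm{supp}(y_k) \subseteq \mathrm{supp}(y_{k+1})$) and then along the $b$-progression. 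Specializing the exponents yields both $x_{i+g} = x_i$ and, for every $r$, $x_{(i+g)+ra} = x_i$, i.e. $i+g$ again has period $a$ and lies in $\mathrm{supp}(y_k)$. Thus $\mathrm{supp}(y_k)$ is invariant under translation by $g$ and, symmetrically, by $-g$, so $y_k$ has period $g$; as $a$ is its least period, $a \mid g \mid b$, giving $m_k \mid m_{k+1}$.

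Finally I extract the sequence. Because $m_k \mid m_{k+1}$, the sequence $(m_k)$ is non-decreasing, and it cannot be bounded: if it were eventually equal to some $m$, then $\ske(k!, x) = \ske(m, x)$ for all large $k$ would force $x = \lim_k \ske(k!,x) = \ske(m,x)$ to be periodic, contradicting aperiodicity. Hence $m_k \to \infty$, and the distinct values of $(m_k)$ form a strictly increasing divisibility chain; discarding the value $1$ (which, being the smallest, can occur only at the start) leaves an infinite chain $n_1 < n_2 < \cdots$ with $n_1 > 1$, $n_i \mid n_{i+1}$, each $n_i$ an essential period, and $\ske(n_i, x) = \ske(k_i!, x)$ with $k_i \to \infty$, so $\ske(n_i, x) \to x$, as required.
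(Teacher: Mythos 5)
Your proof is correct, but there is nothing in the paper to measure it against: the paper states Lemma~\ref{lem:PeriodicStructure} with no proof at all, deferring (via the remark at the start of the section) to the standard references on Toeplitz flows. So what you have produced is a genuinely self-contained argument, carried out entirely inside the paper's combinatorial framework of skeletons and essential periods, and all three of its ingredients check out. Your sublemma --- if $\ell$ is the least period of the point $\ske(K,x)$ then $\ske(\ell,x)=\ske(K,x)$, hence $\ell$ is an essential period --- is sound: non-blank entries of $\ske(K,x)$ propagate along the $\ell$-progression because the skeleton is itself $\ell$-periodic and its non-blank entries are genuine values of $x$. The divisibility step, which is the real content, also holds up: for $i$ in the support of $y_k$, each $i+ra$ stays in $\mathrm{supp}(y_k)\subseteq\mathrm{supp}(y_{k+1})$ and carries the value $x_i$, giving $x_{i+ra+r'b}=x_i$ for all $r,r'$; specializing the Bézout representation $g=sa+tb$ (and its negative) shows that translation by $g=\gcd(a,b)$ is a value-preserving bijection of $\mathrm{supp}(y_k)$, so $g$ is a period of the point $y_k$, and since the periods of a bi-infinite sequence form a subgroup of $\Z$, the least period $a$ divides $g$, i.e.\ $a=g\mid b$. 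The final extraction is fine as well: aperiodicity rules out a bounded $(m_k)$, since otherwise $x=\lim_k\ske(k!,x)=\ske(m,x)$ would be periodic, and discarding a possible initial value $1$ costs nothing. Two things your route buys: it delivers the self-containedness the paper advertises for this section but omits for this particular lemma, and it isolates a reusable fact (your sublemma) that every period of $x$ reduces to an essential period dividing it with the same skeleton --- which is essentially how essential periods are defined in the literature the paper cites.
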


The sequence $(n_1, n_2, \ldots)$ is called the \emph{periodic structure}, and the \emph{maximal equicontinuous factor} is the inverse limit of the diagram $\cdots \rightarrow \Z_{n_2} \rightarrow \Z_{n_1}$ where the map $\Z_{n_{i+1}} \rightarrow \Z_{n_i}$ is $a \mapsto a \bmod \Z_{n_i}$. The reason the maximal equicontinuous factor is useful in organizing this information is that the inverse limit is independent of the choice $(n_1, n_2, \ldots)$ up to isomorphism. It turns out that this is indeed the maximal equicontinuous factor, in the sense that it is a factor with equicontinuous dynamics, and every factor with equicontinuous dynamics factors through it. For us, it is enough to talk directly about the set of finite factors of the form $\Z_n$.

\begin{lemma}
\label{lem:LeastPeriodThing}
If $X$ is a Toeplitz subshift, $x \in X$ is Toeplitz, and the least period of $[i, j]$ in $x$ is $k$, then $\Z_k$ is a factor of $X$.
\end{lemma}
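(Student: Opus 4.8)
The plan is to show that the least period $k$ of $[i,j]$ is itself an \emph{essential} period of $x$, after which Lemma~\ref{lem:FiniteFactors} (applied with $n = k$, using the essential period $k$ itself) immediately yields that $\Z_k$ is a factor of $X$. Recall that $k$ is an essential period precisely when $|\OC{\ske(k,x)}| = k$, so the whole task reduces to computing the least period of the periodic point $\ske(k,x)$.

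First I would observe that the interval $[i,j]$ lies entirely in the non-blank part of $\ske(k,x)$, and that $\ske(k,x)$ agrees with $x$ throughout $[i,j]$. Indeed, since $k$ is a period of $[i,j]$, we have $x_{c+mk}=x_c$ for every $c \in [i,j]$ and every $m \in \Z$, which is exactly the condition defining $\ske(k,x)_c = x_c \neq \bla$.

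Next, let $\ell = |\OC{\ske(k,x)}|$ be the least period of $\ske(k,x)$; since $\ske(k,x)$ has period $k$, we get $\ell \mid k$, and in particular $\ell \le k$. The key step is to transport the $\ell$-periodicity of the skeleton back to $x$ along the interval. Because $\sigma^\ell(\ske(k,x)) = \ske(k,x)$ and each $c \in [i,j]$ is non-blank, we have $\ske(k,x)_{c+m\ell} = \ske(k,x)_c = x_c$ for all $m$; as these skeleton symbols are non-blank, they coincide with the corresponding symbols of $x$, giving $x_{c+m\ell} = x_c$ for all $m \in \Z$ and all $c \in [i,j]$. Thus $\ell$ is a period of $[i,j]$, and minimality of $k$ forces $\ell \ge k$. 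Combined with $\ell \mid k$ this yields $\ell = k$, so $|\OC{\ske(k,x)}| = k$ and $k$ is an essential period, completing the argument.

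The step I expect to require the most care is the transport in the previous paragraph: the identity $x_{c+m\ell}=x_c$ is legitimate only because the entire progression $c + \ell\Z$ stays inside the non-blank (genuinely $k$-periodic) part of the skeleton, where skeleton symbols equal those of $x$; one must keep track of the blank symbol $\bla$ rather than reasoning directly on $x$. Note that this argument is uniform in whether $x$ is periodic or aperiodic, so no separate treatment of the finite case is needed and Lemma~\ref{lem:PeriodicStructure} is not invoked.
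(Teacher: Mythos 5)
Your proposal is correct and follows essentially the same route as the paper: both establish that $k$ is an essential period by noting $\ske(k,x)_{[i,j]} = x_{[i,j]}$ and transporting any period $\ell < k$ of the skeleton back to a period of $[i,j]$ in $x$ (contradicting minimality of $k$), then conclude via Lemma~\ref{lem:FiniteFactors}. Your write-up merely makes explicit the details the paper compresses into one sentence, namely that the progression $c+\ell\Z$ stays in the non-blank part of the skeleton where skeleton symbols equal those of $x$.
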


\begin{proof}
Clearly, $\ske(x, k)_{[i, j]} = x_{[i, j]}$. If $\sigma^\ell(\ske(k, x)) = \ske(k, x)$ for some $0 < \ell < k$, then $\ell$ is a smaller period of $[i, j]$ in $x$. Thus, $k$ is an essential period, and the result follows from Lemma~\ref{lem:FiniteFactors}.
\end{proof}

We now define the notions of disjointness and independence. Disjointness is a relatively well-known concept in the theory of dynamical systems, and it was introduced in \cite{Fu67}. We do not know if independence has been studied previously, but it is useful to us when studying the automorphism group of our Toeplitz example in Section~\ref{sec:InterestingEndos}. While distinct for minimal subshifts in general, we show in Theorem~\ref{thm:AllAboutPerp} that the two notions, disjointness and independence, are equivalent for Toeplitz subshifts, and simply state that the systems have no common finite factor. These, and some other equivalent notions, are listed in Theorem~\ref{thm:AllAboutPerp}.

\begin{definition}
If $X,Y$ are two subshifts, we say that a subshift $J \subset X \times Y$ is a joining of $X$ and $Y$ if the restrictions of the projection maps $\pi_1 : J \to X$ and $\pi_2 : J \to Y$ are surjective. If each joining is equal to $X \times Y$, we then say that $X$ and $Y$ are \emph{disjoint}, and denote this by $X \perp Y$.
\end{definition}

\begin{lemma}
\label{lem:PerpIsMinimal}
Suppose $X$ and $Y$ are minimal. Then $X \perp Y$ if and only if $X \times Y$ is minimal.
\end{lemma}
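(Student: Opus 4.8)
The plan is to prove both implications from a single observation: for minimal $X$ and $Y$, every nonempty closed shift-invariant subset of $X \times Y$ is automatically a joining. I would first record the easy direction. Suppose $X \times Y$ is minimal, and let $J \subseteq X \times Y$ be any joining. By definition $J$ is a subshift, hence a closed shift-invariant subset of $X \times Y$; it is nonempty because its projection onto $X$ is surjective and $X \neq \emptyset$. Thus $J$ is a nonempty subsystem, and minimality of $X \times Y$ forces $J = X \times Y$. As $J$ was arbitrary, every joining equals $X \times Y$, which is exactly $X \perp Y$.

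For the converse, assume $X \perp Y$ and let $M \subseteq X \times Y$ be any nonempty closed shift-invariant subset; the goal is $M = X \times Y$. The key step is to verify that $M$ is in fact a joining. The image $\pi_1(M)$ is compact, being the continuous image of the compact set $M$, hence closed in $X$; and it is shift-invariant because $\pi_1 \circ \sigma = \sigma \circ \pi_1$. Since $M \neq \emptyset$, the set $\pi_1(M)$ is a nonempty closed shift-invariant subset of the minimal system $X$, so $\pi_1(M) = X$. The same argument gives $\pi_2(M) = Y$. Thus both projections restricted to $M$ are surjective, so $M$ is a joining, and disjointness yields $M = X \times Y$. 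Since an arbitrary nonempty subsystem of $X \times Y$ coincides with the whole space, $X \times Y$ is minimal.

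The ingredients are elementary: continuous images of compact sets are compact (giving closedness of the projections), the projections intertwine the shift, and the definition of minimality. The only place requiring care, and the conceptual heart of the argument, is the realization that disjointness is tested only against joinings, whereas minimality must be checked against all nonempty subsystems; the two are reconciled precisely by the observation that minimality of the factors upgrades every nonempty subsystem of the product into a joining. I do not expect any genuine obstacle beyond making this projection argument precise.
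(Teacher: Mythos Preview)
Your proof is correct and follows essentially the same approach as the paper. The only cosmetic difference is in the converse direction: the paper picks a point $(x,y)$ in the subsystem and argues that its orbit closure projects onto each factor (since every orbit is dense in a minimal system), whereas you argue directly that the projections of the subsystem itself are closed invariant and hence equal to the factors---both are the same minimality argument in slightly different packaging.
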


\begin{proof}
If $X \times Y$ is minimal and $J$ is a joining of $X$ and $Y$, then $J$ is a nonempty subshift of $X \times Y$, and thus $J = X \times Y$ by minimality, so that $X \perp Y$.

Suppose then that $X \perp Y$ and $J$ is a nonempty subshift of $X \times Y$. Then $(x, y) \in J$ for some $x \in X$ and $y \in Y$. Since both $X$ and $Y$ are minimal, $x$ generates $X$ and $y$ generates $Y$, so that the orbit closure $K$ of $(x, y)$ projects onto $X$ through $\pi_1$ and onto $Y$ through $\pi_2$. By $X \perp Y$, we have $K = X \times Y$. Of course, we then have $J = X \times Y$, so that $X \times Y$ is minimal.
\end{proof}

In the case that one of the systems is finite, we have the following alternative characterization.

\begin{lemma}
\label{lem:FinitePerp}
Let $X$ be minimal. Then $X \perp \Z_m$ if and only if $(X, \sigma^m)$ is minimal.
\end{lemma}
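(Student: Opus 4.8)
The plan is to route the argument through Lemma~\ref{lem:PerpIsMinimal}. Since $X$ is minimal by hypothesis and $\Z_m$ (with dynamics $a \mapsto a+1$) is a finite minimal system, that lemma gives that $X \perp \Z_m$ holds if and only if the product $X \times \Z_m$ is minimal, where the product carries the dynamics $T(x,a) = (\sigma(x), a+1)$. So it suffices to prove that $X \times \Z_m$ is minimal if and only if $(X, \sigma^m)$ is minimal, and the real content is comparing the subsystems of these two systems.

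To set up this comparison I would slice along the fibers over $\Z_m$. Given a closed $T$-invariant set $K \subseteq X \times \Z_m$, write $K_a = \{x \in X : (x,a) \in K\}$ for each $a \in \Z_m$, so that $K = \bigcup_{a} K_a \times \{a\}$. Since $T$ is a homeomorphism and $T(K) = K$, unwinding the definition of the fibers shows that $T$-invariance is equivalent to the relation $\sigma(K_a) = K_{a+1}$ for all $a$; iterating this gives $K_a = \sigma^a(K_0)$ and, reading the relation around the cycle, $\sigma^m(K_0) = K_0$. Thus the fiber $K_0$ is a closed $\sigma^m$-invariant subset of $X$. Conversely, any closed $Y \subseteq X$ with $\sigma^m(Y) = Y$ yields the closed $T$-invariant set $\bigcup_a \sigma^a(Y) \times \{a\}$ with fiber $Y$ over $0$, so the assignment $K \mapsto K_0$ is a bijection between subsystems of $X \times \Z_m$ and subsystems of $(X, \sigma^m)$.

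It then remains to match the trivial and proper subsystems on the two sides. Because each $\sigma^a$ is a bijection of $X$, the set $K$ is nonempty exactly when $K_0$ is nonempty, and $K = X \times \Z_m$ holds exactly when every $K_a = X$, which by $K_a = \sigma^a(K_0)$ is equivalent to $K_0 = X$. Hence a proper nonempty $T$-invariant subset of $X \times \Z_m$ exists precisely when a proper nonempty $\sigma^m$-invariant subset of $X$ exists, i.e.\ $X \times \Z_m$ is non-minimal iff $(X, \sigma^m)$ is non-minimal. Together with the reduction via Lemma~\ref{lem:PerpIsMinimal}, this establishes the equivalence. I expect no serious difficulty here; the one step that demands care is the translation of $T(K) = K$ into the fiber relation $\sigma(K_a) = K_{a+1}$ and the observation that it forces the honest equality $\sigma^m(K_0) = K_0$ rather than a mere inclusion, since that is exactly what makes $K_0$ a genuine subsystem of $(X, \sigma^m)$ and keeps the correspondence a bijection.
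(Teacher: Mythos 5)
Your proof is correct, and after the shared first step --- invoking Lemma~\ref{lem:PerpIsMinimal} to replace $X \perp \Z_m$ by minimality of $X \times \Z_m$ --- it takes a genuinely different route from the paper's. The paper proves the equivalence of minimality of $X \times \Z_m$ and of $(X,\sigma^m)$ via dense orbits: given $(x,n_1)$ and $(y,n_2)$, minimality of $\sigma^m$ produces $k$ with $\sigma^{km}(\sigma^{n_2-n_1}(x))$ within $\epsilon$ of $y$, hence $\sigma^{km+n_2-n_1}(x,n_1)$ is within $\epsilon$ of $(y,n_2)$; conversely, a return of $\sigma^n(x,0)$ to the fiber over $0$ forces $m \mid n$. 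You instead build a bijection $K \mapsto K_0$ between closed $T$-invariant subsets of $X \times \Z_m$ and closed $\sigma^m$-invariant subsets of $X$, via the fiber relation $\sigma(K_a) = K_{a+1}$ (which correctly closes up to $\sigma^m(K_0) = K_0$ around the cycle), and then match the empty and full subsystems on the two sides. Both arguments are sound; yours is purely topological --- no metric and no dense-orbit characterization of minimality is needed, so it works verbatim for non-metrizable compact systems --- and it yields slightly more, namely an isomorphism of the full lattices of subsystems, of which the minimality equivalence is the special case. The paper's argument is shorter, trading that structure for a two-line $\epsilon$-approximation that leans on the standard equivalence between minimality and density of every orbit.
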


\begin{proof}
We have $X \perp \Z_m$ if and only if $X \times \Z_m$ is minimal.

If $(X, \sigma^m)$ is minimal, then for all $\epsilon > 0$ and $x, y \in X$, if $(x, n_1), (y, n_2) \in X \times \Z_n$ and $n_1 \leq n_2$ (the other case being symmetric), by the minimality of $\sigma^m$, there exists $k$ such that $d(\sigma^{km}(\sigma^{n_2-n_1}(x)), y) < \epsilon$, and then
\[ d(\sigma^{km+n_2-n_1}((x, n_1)), (y, n_2)) = d((\sigma^{km+n_2-n_1}(x), n_2), (y, n_2)) < \epsilon. \]

If $X \times \Z_m$ is minimal then for any $\epsilon > 0$ and $x, y \in X$, for some $n \in \N$ we have $\sigma^n(x, 0) = (z, 0)$ where $d(z, y) < \epsilon$. Clearly $n = km$ for some $k$, so $(X, \sigma^m)$ is minimal.
\end{proof}

\begin{definition}
A morphism $\phi : X \times Y \to Z$ is \emph{right-independent} if $\phi$ factors through the projection map $\pi_1 : X \times Y \to X$, that is,
\[ \exists h : X \to X: \phi = h \circ \pi_1. \]
We define \emph{right-dependence} as the complement of right-independence, and \emph{left-independence} and \emph{left-dependence} symmetrically. If all morphisms $\phi : X \times Y \to X$ are right-independent then we say $X$ is independent from $Y$. If $X$ is independent from $Y$ and $Y$ from $X$, then we say the two are \emph{mutually independent}.
\end{definition}

In general, we can define these notions in categories with products, and in concrete categories where products correspond to set theoretic products, $X$ is independent from $Y$ if there are no morphisms $\phi : X \times Y \to X$ which actually depend on the $Y$-coordinate.

The two notions have nontrivial interplay within the class of minimal systems. We can at least construct two minimal systems $X$ and $Y$ such that $X \times Y$ is minimal, but $X$ depends on $Y$:


\begin{example}
For all $u \in \{0, 1\}^*$, let $O(u)$ be the word where odd coordinates of $u$ have been flipped (counting from the left, starting with $0$), and $E(u)$ the word where even coordinates have been flipped. Let $B(u) = E(O(u))$. If $|u|$ is odd, then $O(uv) = O(u)E(v)$ and $E(uv) = E(u)O(v)$.

Let $w_0 = 000$, and inductively define $w_{i+1} = w_i w_i O(w_i) O(w_i) B(w_i)$, all of which are of odd length. For example, $w_1 = 000000010010111$ and
\begin{align*}
w_2 = \;\,&000000010010111 000000010010111 010101000111101 \, \cdot \\
&010101000111101 1111111011011000.
\end{align*}

For all $i$, $w_i$ occurs in all of $w_{i+1}$, $O(w_{i+1})$, $E(w_{i+1})$ and $B(w_{i+1})$:
\begin{itemize}
\item $O(w_{i+1}) = O(w_i)E(w_i) w_iB(w_i)E(w_i)$,
\item $E(w_{i+1}) = E(w_i)O(w_i) B(w_i)w_iO(w_i)$, and
\item $B(w_{i+1}) = B(w_i)B(w_i) E(w_i)E(w_i)w_i$.
\end{itemize}
For any $i$, the point $x = \lim_j w_j$ is an infinite product of the words $w_{i+1}$, $O(w_{i+1})$, $E(w_{i+1})$ and $B(w_{i+1})$. By the previous observation, it is then uniformly recurrent. Thus, the system $X = \overline{\mathcal{O}(x)}$ with the shift dynamics $\sigma$ is minimal. Since $w_iw_i \sqsubset X$ for all $i$ and $|w_i|$ is odd, also $\sigma^2$ is minimal.

Now, let $Y = \mathcal{O}((01)^\Z)$. It follows from the minimality of $\sigma^2$, Lemma~\ref{lem:FinitePerp} and Lemma~\ref{lem:PerpIsMinimal} that also $X \times Y$ is minimal. By the inductive definition of $X$, the map
\[ \phi(x, y) = x + y, \]
where $+$ is cellwise addition modulo $2$, is well-defined from $X \times Y$ to $X$. It clearly depends on the $Y$-coordinate. \qed
\end{example}

We give an obvious composition result. Of the two claims we prove, we only need the first one.\footnote{There are many more symmetric versions of this lemma. We can of course replace right by left, but we can also define a dual notion of `coindependence' by considering maps from $X$ to the coproduct (disjoint union) $X \cup Y$ instead of maps from the product $X \times Y$ to $X$.}

\begin{definition}
A map $\xi : X \times Y \to Z$ is \emph{right-surjective} if for all $x$, the function $\xi|_{\{x\} \times Y} : \{x\} \times Y \to Z$ is surjective. We define \emph{left-surjectivity}, \emph{right-injectivity} and \emph{left-injectivity} in the obvious way, and \emph{bi-surjectivity} and \emph{bi-injectivity} as the conjunction of the respective left- and right notions.
\end{definition}

\begin{lemma}%
\label{lem:FactorThing}
Let $\xi : X \times Y \to Z$ and $\xi' : X \times Z \to X$ be morphisms. If
\begin{itemize}
\item $\xi$ is right-surjective and $\xi'$ right-dependent, then $X$ is dependent of $Y$.
\item $\xi$ is right-dependent and $\xi'$ right-injective, then $X$ is dependent of $Y$.
\end{itemize}
\end{lemma}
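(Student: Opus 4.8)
The plan is to produce, under either hypothesis, an explicit right-dependent morphism $\psi : X \times Y \to X$, thereby witnessing that $X$ is dependent of $Y$. In both cases the natural candidate is the composite obtained by feeding the output of $\xi$ into the second coordinate of $\xi'$, namely
\[ \psi(x, y) = \xi'(x, \xi(x, y)). \]

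First I would check that $\psi$ is a genuine morphism. It is the composition of the map $(x, y) \mapsto (x, \xi(x, y))$ from $X \times Y$ to $X \times Z$ with $\xi' : X \times Z \to X$. The first map is continuous, and it commutes with the shift because $\xi$ does and the identity on $X$ does; composing with the morphism $\xi'$ then gives a continuous, shift-commuting map $X \times Y \to X$. Hence $\psi$ is a morphism, and it only remains to show it is right-dependent, i.e.\ that some fixed $x$ admits $y_1, y_2 \in Y$ with $\psi(x, y_1) \neq \psi(x, y_2)$.

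For the first claim, right-dependence of $\xi'$ gives $x_0 \in X$ and $z_1, z_2 \in Z$ with $\xi'(x_0, z_1) \neq \xi'(x_0, z_2)$; right-surjectivity of $\xi$ then lets me pick $y_1, y_2 \in Y$ with $\xi(x_0, y_1) = z_1$ and $\xi(x_0, y_2) = z_2$, so that $\psi(x_0, y_1) \neq \psi(x_0, y_2)$. For the second claim, right-dependence of $\xi$ gives $x_0$ and $y_1, y_2$ with $\xi(x_0, y_1) \neq \xi(x_0, y_2)$, and right-injectivity of $\xi'$ (applied with the first coordinate fixed to $x_0$) preserves this inequality after applying $\xi'(x_0, \cdot)$, again yielding $\psi(x_0, y_1) \neq \psi(x_0, y_2)$. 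In each case $\psi$ is right-dependent, so $X$ is dependent of $Y$.

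The argument is essentially a diagram chase, so I do not expect a serious obstacle; the only point demanding a moment of care is verifying that the intermediate map $(x, y) \mapsto (x, \xi(x, y))$ is shift-commuting, which is what makes $\psi$ a morphism rather than merely a continuous map. Everything else is the bookkeeping of tracking a single pair of witnesses through the composition.
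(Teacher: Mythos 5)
Your proposal is correct and is essentially identical to the paper's proof: the paper also defines the composite $\phi(x, y) = \xi'(x, \xi(x, y))$ and asserts that under either hypothesis it witnesses dependence. You have simply filled in the routine verifications (that the composite is a morphism, and the witness-chasing for each bullet) that the paper leaves implicit.
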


\begin{proof}
Define $\phi(x, y) = \xi'(x, \xi(x, y))$. If either assumption holds for $\xi$ and $\xi'$, this map shows that $X$ is dependent of $Y$.
\end{proof}

\begin{lemma}%
\label{lem:DependsOnBoring}
A nontrivial subshift $X$ is dependent of every system $(\Z_m, \ID)$ with $m > 1$.
\end{lemma}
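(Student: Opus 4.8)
The plan is to unwind the definitions until the statement becomes a bookkeeping exercise about endomorphisms. The key observation is that a morphism $\phi : X \times (\Z_m, \ID) \to X$ is nothing more than an $m$-indexed family of endomorphisms of $X$, and that $\phi$ is right-dependent exactly when this family is non-constant. So the whole lemma reduces to producing two distinct endomorphisms of a nontrivial $X$ and gluing them along the $\Z_m$-coordinate.

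First I would record this identification precisely. Writing $T(x,a) = (\sigma(x), a)$ for the dynamics on the product, continuity of $\phi$ is automatic because $\Z_m$ carries the discrete topology, and the intertwining condition $\phi \circ T = \sigma \circ \phi$ is equivalent to each slice $\phi_a := \phi(\cdot, a) : X \to X$ being an endomorphism of $(X, \sigma)$ (the computation is just $\phi_a(\sigma x) = \phi(T(x,a)) = \sigma\phi(x,a) = \sigma\phi_a(x)$). A factorization $\phi = h \circ \pi_1$ forces all slices to coincide with $h$, so $\phi$ is right-dependent precisely when $\phi_a \neq \phi_b$ for some $a, b$. Since $m > 1$ guarantees $0 \neq 1$ in $\Z_m$, it then suffices to find two distinct endomorphisms $g_0, g_1$ of $X$ and set $\phi(x, a) = g_0(x)$ for $a = 0$ and $\phi(x,a) = g_1(x)$ otherwise: each slice is an endomorphism, so $\phi$ is a genuine morphism, and it depends on the second coordinate.

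It remains to exhibit two distinct endomorphisms of a nontrivial $X$ (i.e. $|X| > 1$). The identity $\ID_X$ always serves as one of them, and for the other I would split into cases according to whether $\sigma$ acts as the identity on $X$. If $\sigma \neq \ID_X$, the shift itself is a second (auto)morphism and we are done. If $\sigma = \ID_X$, then every point of $X$ is $\sigma$-fixed, hence constant, so $X \subseteq \{s^\Z : s \in S\}$ is a finite discrete space carrying trivial dynamics; then every self-map of $X$ commutes with $\sigma = \ID_X$ and so is an endomorphism, and $|X| > 1$ supplies a non-identity self-map.

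The only point requiring any care is this degenerate case $\sigma = \ID_X$, that is, a nontrivial subshift all of whose points are constant (such as $\{0^\Z, 1^\Z\}$): here one cannot use the shift as the second endomorphism and must instead exploit that trivial dynamics turns every continuous self-map into a morphism. Everything else is a direct unwinding of the definitions of morphism and right-independence, so I expect no real obstacle beyond remembering to treat this degenerate subshift separately.
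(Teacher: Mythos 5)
Your proof is correct and follows essentially the same route as the paper's: produce two distinct endomorphisms of $X$ and glue them along the $\Z_m$-coordinate (the paper cuts $\Z_m$ by an arbitrary subset $\emptyset \subsetneq C \subsetneq \Z_m$ rather than your singleton $\{0\}$, which is immaterial). You are in fact slightly more careful than the paper: its proof simply declares $\ID_X$ and the shift map to be the two distinct endomorphisms, which fails for a nontrivial subshift consisting of shift-fixed points, such as $\{0^\Z, 1^\Z\}$ --- precisely the degenerate case you treat separately, so your case analysis closes a small gap in the paper's argument (harmless in the paper's applications, where $X$ is an infinite minimal subshift).
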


\begin{proof}
Let $X$ be any such subshift. There exist two distinct endomorphisms $\phi_1$ and $\phi_2$ of $X$, for example, $\ID_X$ and the shift map. Let $\emptyset \subsetneq C \subsetneq \Z_m$ be any subset. Let 
\[ \phi(x, y) = \left\{ \begin{array}{ll}\phi_1(x), &\mbox{if $y \in C$,} \\ \phi_2(x), &\mbox{otherwise.} \\\end{array}\right. \]
Then $\phi$ is a right-dependent map, so $X$ is not independent of $(\Z_m, \ID)$.
\end{proof}

\begin{definition}
A \emph{(nontrivial) invariant} of a system $X$ is a factor map from $X$ to a system $(\Z_m, \ID)$. 
\end{definition}

\begin{lemma}%
\label{lem:InvariantNotTransitive}
If $X$ is transitive, then it has no nontrivial invariant.
\end{lemma}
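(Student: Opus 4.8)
The plan is to exploit the fact that the target system $(\Z_m, \ID)$ carries \emph{trivial} dynamics, which forces any morphism into it to be constant along orbits. Suppose toward a contradiction that $X$ is transitive yet admits a nontrivial invariant, that is, a surjective morphism $f : X \to (\Z_m, \ID)$ with $m > 1$. By the definition of a morphism, $f \circ \sigma = \ID \circ f = f$, so $f(\sigma(x)) = f(x)$ for all $x \in X$, and hence each fiber $f^{-1}(j)$ is invariant under both $\sigma$ and $\sigma^{-1}$.

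I would then note that these fibers are clopen and that there are nontrivially many of them. Since $\Z_m$ carries the discrete topology, every singleton $\{j\}$ is clopen, and by continuity of $f$ so is each preimage $f^{-1}(j) \subseteq X$. Surjectivity makes every fiber nonempty, and $m > 1$ guarantees at least two of them.

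The contradiction then falls out of transitivity. Set $U = f^{-1}(0)$ and $V = f^{-1}(1)$: both are nonempty and open, and they are disjoint as fibers over distinct points. Transitivity yields some $i \in \Z$ with $\sigma^i(U) \cap V \neq \emptyset$; but $U$ is $\sigma$-invariant, so $\sigma^i(U) = U$ and therefore $U \cap V \neq \emptyset$, contradicting their disjointness.

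I do not expect any genuine obstacle: the whole argument hinges on the single observation that trivial dynamics on the target turns the invariant into a partition of $X$ into disjoint nonempty clopen invariant pieces, and transitivity forbids any such splitting. The only point deserving explicit mention is that ``nontrivial'' should be read as $m > 1$, which is precisely what supplies the two distinct fibers needed to invoke the transitivity condition.
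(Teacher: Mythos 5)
Your proof is correct and is essentially the paper's argument, unpacked: the paper's one-line proof just observes that the image of a transitive system under a factor map is transitive, whereas $(\Z_m, \ID)$ with $m > 1$ is not, and your fiber argument is precisely the verification of that fact, carried out upstairs in $X$ (pulling back the clopen partition) rather than downstairs in the factor. There are no gaps.
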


\begin{proof}
The image of a transitive system in a factor map is transitive.
\end{proof}

\begin{lemma}%
\label{lem:DependsOnInvariant}
If $X$ is nontrivial and $X \times Y$ has a nontrivial right-surjective invariant, then $X$ is dependent of $Y$.
\end{lemma}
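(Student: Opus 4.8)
The plan is to reduce this to the two composition facts already in hand, namely Lemma~\ref{lem:DependsOnBoring} and the first bullet of Lemma~\ref{lem:FactorThing}. Write $\xi : X \times Y \to (\Z_m, \ID)$ for the given nontrivial right-surjective invariant. Here nontriviality of the invariant means $m > 1$, and right-surjectivity means that for each fixed $x$ the restriction $\xi|_{\{x\} \times Y}$ is onto $\Z_m$. The target carries the trivial dynamics $\ID$, which is exactly the form of finite system featuring in Lemma~\ref{lem:DependsOnBoring}.

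First I would invoke Lemma~\ref{lem:DependsOnBoring}: since $X$ is nontrivial and $m > 1$, the subshift $X$ is dependent of $(\Z_m, \ID)$. Unwinding the definition of dependence, this produces a right-dependent morphism $\xi' : X \times \Z_m \to X$; concretely the one from the proof of that lemma, which switches between two distinct endomorphisms of $X$ according to whether the $\Z_m$-coordinate lands in a fixed proper nonempty subset $C \subsetneq \Z_m$ (such a $C$ exists precisely because $m > 1$).

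Next I would feed $\xi$ and $\xi'$ into Lemma~\ref{lem:FactorThing}, taking $Z = (\Z_m, \ID)$. Then $\xi : X \times Y \to Z$ is right-surjective and $\xi' : X \times Z \to X$ is right-dependent, so the first bullet of that lemma immediately yields that $X$ is dependent of $Y$, which is the claim. Explicitly, the witnessing morphism is $\phi(x, y) = \xi'(x, \xi(x, y))$, a genuine morphism $X \times Y \to X$ because $(x,y) \mapsto (x, \xi(x,y))$ is a morphism $X \times Y \to X \times \Z_m$ (the second factor carrying $\ID$, matching the invariant) and $\xi'$ is a morphism.

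There is no genuine obstacle here, since all the real content has been isolated in the earlier lemmas; the proof is essentially forced once the systems are lined up. The only points that need care are bookkeeping: confirming that the trivial dynamics on $\Z_m$ attached to the invariant coincides with the $(\Z_m, \ID)$ of Lemma~\ref{lem:DependsOnBoring}, and reading ``nontrivial invariant'' as $m > 1$ so that both a proper nonempty $C$ exists and right-surjectivity of $\xi$ actually carries information. With these conventions fixed, the argument collapses to a single application of Lemma~\ref{lem:FactorThing}.
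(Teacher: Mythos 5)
Your proof is correct and is essentially identical to the paper's: the paper likewise takes the right-surjective invariant $\xi : X \times Y \to (\Z_m, \ID)$, obtains a right-dependent $\xi' : X \times (\Z_m, \ID) \to X$ from Lemma~\ref{lem:DependsOnBoring}, and concludes via the first bullet of Lemma~\ref{lem:FactorThing}. Your additional bookkeeping (checking that $(x,y) \mapsto (x,\xi(x,y))$ is a morphism into $X \times (\Z_m,\ID)$ and that nontriviality means $m>1$) is just an explicit unwinding of what the paper leaves implicit.
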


\begin{proof}
Let $\xi : X \times Y \to (\Z_m, \ID)$ be a nontrivial right-surjective invariant. By Lemma~\ref{lem:DependsOnBoring}, $X$ depends on $(\Z_m, \ID)$, so that some map $\xi' : X \times (\Z_m, \ID) \to X$ is right-dependent. The result then follows from Lemma~\ref{lem:FactorThing}.
\end{proof}

\begin{theorem}
\label{thm:AllAboutPerp}
Let $X, Y$ be nontrivial Toeplitz subshifts. Then the following are equivalent:
\begin{enumerate}
\item	$X \perp Y$ \label{perp}
\item	$X \times Y$ is minimal \label{minimal}
\item	$X \times Y$ is transitive \label{transitive}
\item	$X$ and $Y$ are mutually independent \label{mutindep}
\item	$X$ is independent from $Y$ \label{indep}
\item	$X$ and $Y$ have no common nontrivial finite factors. \label{nofactors}
\end{enumerate}
\end{theorem}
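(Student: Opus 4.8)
The plan is to establish the six equivalences through a cycle of implications, leaning on the lemmas already proved. The natural backbone is to show
\[ (\ref{minimal}) \Leftrightarrow (\ref{perp}), \quad (\ref{minimal}) \Rightarrow (\ref{transitive}), \quad (\ref{transitive}) \Rightarrow (\ref{nofactors}), \quad (\ref{nofactors}) \Rightarrow (\ref{minimal}), \]
and then close off the independence conditions by showing $(\ref{transitive}) \Rightarrow (\ref{indep})$, $(\ref{indep}) \Rightarrow (\ref{mutindep})$ is immediate by symmetry in the Toeplitz setting, and $(\ref{mutindep}) \Rightarrow (\ref{indep})$ is trivial. First I would dispatch $(\ref{perp}) \Leftrightarrow (\ref{minimal})$, which is exactly Lemma~\ref{lem:PerpIsMinimal} since $X, Y$ are minimal (being Toeplitz). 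The implication $(\ref{minimal}) \Rightarrow (\ref{transitive})$ is free because any minimal system is transitive.

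The genuinely substantive implication is $(\ref{nofactors}) \Rightarrow (\ref{minimal})$, and I expect this to be the main obstacle. Here is where the structure theory from the earlier lemmas must be used: a common nontrivial finite factor $\Z_n$ of both $X$ and $Y$ (with $n > 1$) is precisely the obstruction to minimality of $X \times Y$, and conversely the absence of such a factor should force minimality. The forward direction of this — that a shared factor $\Z_n$ destroys minimality — is easy: if $f : X \to \Z_n$ and $g : Y \to \Z_n$ are factor maps, then $(f(x) - g(y)) \in \Z_n$ is a nontrivial invariant of $X \times Y$, so $X \times Y$ carries a factor onto $(\Z_n, \ID)$ and cannot be transitive by Lemma~\ref{lem:InvariantNotTransitive}; this simultaneously gives $(\ref{transitive}) \Rightarrow (\ref{nofactors})$ by contraposition. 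The reverse direction, showing that no common finite factor implies $X \times Y$ is minimal, is where I would need to work. The idea is to use the periodic structure from Lemma~\ref{lem:PeriodicStructure}: pick Toeplitz generators $x$ of $X$ and $y$ of $Y$, and show $(x, y)$ generates $X \times Y$. Given a target pattern $(x', y')_{[-n,n]}$ in $X \times Y$, one wants a shift $\sigma^j$ of $(x,y)$ matching it on $[-n,n]$. The periods of the relevant windows of $x$ and $y$ are controlled by essential periods, hence by finite factors $\Z_{k}$ and $\Z_{k'}$; since $X$ and $Y$ share no common finite factor beyond the trivial one, the corresponding $k$ and $k'$ can be taken coprime (any common factor $\Z_d$ with $d > 1$ would be a shared finite factor), so the Chinese Remainder Theorem lets me align both coordinates simultaneously. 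This coprimality-via-no-common-factor step, combined with the local detectability of skeletons from Lemma~\ref{lem:ContinuityThing}, is the crux.

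For the independence conditions, $(\ref{transitive}) \Rightarrow (\ref{indep})$ should follow by contraposition through the composition machinery: if $X$ depends on $Y$, there is a right-dependent morphism $\phi : X \times Y \to X$, and I would argue this produces a proper joining or a nontrivial invariant of $X \times Y$, contradicting transitivity — in fact, combining with Lemma~\ref{lem:DependsOnInvariant} and Lemma~\ref{lem:DependsOnBoring} gives the cleanest route, since dependence on $Y$ for a Toeplitz $X$ should manifest as a right-surjective invariant of $X \times Y$ via the shared finite factor. Conversely, the direction $(\ref{nofactors}) \Rightarrow (\ref{indep})$ or $(\ref{minimal}) \Rightarrow (\ref{indep})$ can be handled by noting that a right-dependent map would have to distinguish points of $Y$ lying in the same fiber, which forces a nontrivial invariant and hence a common finite factor by the argument above. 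Finally, $(\ref{mutindep}) \Leftrightarrow (\ref{indep})$ is immediate: mutual independence trivially implies independence, and the converse holds because the entire list of conditions is symmetric in $X$ and $Y$ (conditions $(\ref{perp})$–$(\ref{transitive})$ and $(\ref{nofactors})$ are manifestly symmetric), so independence of $X$ from $Y$ propagates to the symmetric statement. I would arrange the final write-up so that each arrow cites the relevant lemma explicitly, isolating the Chinese Remainder / skeleton argument as the one place where real work occurs.
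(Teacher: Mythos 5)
Your handling of the independence conditions $\eqref{mutindep}$ and $\eqref{indep}$ contains a genuine gap, and it is twofold. First, your arrow set ($\eqref{perp}\Leftrightarrow\eqref{minimal}$, $\eqref{minimal}\Rightarrow\eqref{transitive}$, $\eqref{transitive}\Rightarrow\eqref{nofactors}$, $\eqref{nofactors}\Rightarrow\eqref{minimal}$, $\eqref{transitive}\Rightarrow\eqref{indep}$, $\eqref{indep}\Rightarrow\eqref{mutindep}$, $\eqref{mutindep}\Rightarrow\eqref{indep}$) has no implication leading \emph{out} of $\{\eqref{mutindep},\eqref{indep}\}$ back into the block $\{\eqref{perp},\eqref{minimal},\eqref{transitive},\eqref{nofactors}\}$, so even if every listed arrow were proved, conditions $\eqref{mutindep}$ and $\eqref{indep}$ could a priori be strictly weaker than the others. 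Your ``by symmetry'' justification of $\eqref{indep}\Rightarrow\eqref{mutindep}$ is circular: symmetry can only be invoked once $\eqref{indep}$ is known to be equivalent to a manifestly symmetric condition such as $\eqref{nofactors}$, and that equivalence is precisely what is missing. The missing arrow $\eqref{indep}\Rightarrow\eqref{nofactors}$ is exactly what Lemma~\ref{lem:DependsOnBoring} and Lemma~\ref{lem:DependsOnInvariant} give when used in their \emph{stated} direction: a common finite factor yields the bi-surjective invariant $(x,y)\mapsto\phi_1(x)-\phi_2(y)$ of $X\times Y$, which by Lemma~\ref{lem:DependsOnInvariant} makes $X$ dependent on $Y$. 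You instead invoke these lemmas in the reverse direction (``dependence \ldots should manifest as a right-surjective invariant''), which is the unproven converse.

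Second, that converse is not merely unproven; it is \emph{false} at the level of generality of the lemmas you cite, so no rearrangement of the composition machinery can deliver your $\eqref{transitive}\Rightarrow\eqref{indep}$. The paper's own Example exhibits a minimal subshift $X$ and the nontrivial Toeplitz subshift $Y=\mathcal{O}((01)^\Z)$ such that $X\times Y$ is minimal (hence transitive, hence carrying no nontrivial invariant by Lemma~\ref{lem:InvariantNotTransitive}) while $X$ \emph{depends} on $Y$ via $(x,y)\mapsto x+y$. Thus transitivity of the product does not imply independence, and dependence does not force an invariant, unless the Toeplitz hypothesis is used in an essential way. The paper's actual proof of $\eqref{nofactors}\Rightarrow\eqref{mutindep}$ is a direct right-independence computation parallel to, but subtler than, the CRT argument you correctly identified as the crux for minimality: given a morphism $\xi: X\times Y\to X$ of one-sided radius $R$ and words $w\in\B_{R+1}(X)$, $u,u'\in\B_{R+1}(Y)$, pick Toeplitz points $x,y$ with $x_{[0,R]}=w$, $y_{[0,R]}=u$, $y_{[j,j+R]}=u'$, and let $k_x$, $k_y$, $k_z$ be the least periods of the relevant windows in $x$, in $y$, \emph{and in the image point} $z=\xi(x,y)$, which is Toeplitz and lies in $X$, so that $\Z_{k_z}$ is also a finite factor of $X$ by Lemma~\ref{lem:LeastPeriodThing}. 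Then $\gcd(k_xk_z,k_y)=1$, and choosing $m$ with $mk_xk_z\equiv j \bmod k_y$ gives $\loc{\xi}(w,u)=z_0=z_{mk_xk_z}=\loc{\xi}(w,u')$, so $\xi$ is right-independent. The period $k_z$ of the image point is the ingredient your sketch omits entirely: aligning only the periods of $x$ and $y$, as in your minimality argument, does not allow one to conclude $z_0=z_{mk_xk_z}$, which is the step that makes the local rule's output independent of the $Y$-word.
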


\begin{proof}
The equivalence of $\eqref{perp}$ and $\eqref{minimal}$ was proved in Lemma~\ref{lem:PerpIsMinimal}. 
It is clear that $\eqref{transitive}$ follows from $\eqref{minimal}$ and $\eqref{indep}$ follows from $\eqref{mutindep}$. If $\eqref{nofactors}$ does not hold, then $X$ and $Y$ have a common finite factor $\Z_m$ through factor maps $\phi_1 : X \to \Z_m$ and $\phi_2 : Y \to \Z_m$ (since the systems systems $\Z_m$ are the only minimal finite systems). This means $\xi(x, y) \mapsto \phi_1(x) - \phi_2(y)$ is a bi-surjective invariant, so that $\eqref{transitive}$ does not hold by Lemma~\ref{lem:InvariantNotTransitive}, and $\eqref{indep}$ does not hold by Lemma~\ref{lem:DependsOnInvariant}.

We now tackle the hard part, the implications $\eqref{nofactors} \implies \eqref{minimal}$ and $\eqref{nofactors} \implies \eqref{mutindep}$, which conclude the proof.

So, suppose $\eqref{nofactors}$. We first show that $\eqref{mutindep}$ follows. Let $\xi : X \times Y \to X$ be a morphism with (one-sided) radius $R$. Choose $w \in \B_{R+1}(X)$ and $u, u' \in \B_{R+1}(Y)$ arbitrarily. Fix Toeplitz points $x \in X$ and $y \in Y$ such that $x_{[0,R]} = w$ and $y_{[0, R]} = u$ (using the fact that Toeplitz points are dense) and choose $j \in \N$ such that $y_{[j, j+R]} = u'$. Of course, $z = \xi(x, y) \in X$ is Toeplitz, since $x$ and $y$ are. Let $[0, R]$ have least period $k_x$ in $x$, let $0$ have least period $k_z$ in $z$, and let $[0, j+R]$ have least period $k_y$ in $y$.

By Lemma~\ref{lem:PeriodicStructure}, there exists a factor map from $X$ to both $\Z_{k_x}$ and $\Z_{k_z}$, and from $Y$ to $\Z_{k_y}$. This means that $\gcd(k_x k_z, k_y) = 1$ by the assumption that $X$ and $Y$ have no common finite factors.


If $\gcd(k_x k_z, k_y) = 1$, then there exists $m$ such that $m k_x k_z \equiv j \bmod k_y$, so
\[ \loc{\xi}(w, u) = \xi(x,y)_0 = \sigma^{m k_x k_z}(\xi(x, y))_0 = \xi(x, \sigma^{m k_x k_z}(y))_0 = \loc{\xi}(w, u'). \]
Because $w$, $u$ and $u'$ were chosen arbitrarily, $\xi$ is right-independent. Thus $\eqref{nofactors} \implies \eqref{mutindep}$.

Next, we prove $\eqref{minimal}$ assuming $\eqref{nofactors}$, along similar lines: Fix Toeplitz points $x \in X$ and $y \in Y$. Let $R \in \N$ be arbitrary and let $w \in \B_{R+1}(X)$ and $u \in \B_{R+1}(Y)$ be arbitrary words. Let $j_1, j_2$ be such that $x_{[j_1, j_1+R]} = w$ and $y_{[j_2, j_2+R]} = u$. As previously, the least period $k_x$ of $[0, j_1+R]$ in $x$ is coprime with the least period $k_y$ of $[0, j_2+R]$ in $y$. Thus, there exists $m$ such that $m k_x \equiv j_2-j_1 \bmod k_y$. We have
\[ \sigma^{n k_x k_y + m k_x + j_1}(x, y)_{[0, R]} = (w, u) \]
for all $n \in \N$. Thus, the orbit of $(x, y)$ is dense in $X \times Y$. Since $(x, y)$ is Toeplitz, $X \times Y$ is minimal. 
\end{proof}

An important observation about finite factors is that endomorphisms of a Toeplitz subshift $X$ induce maps on the finite factors.

\begin{lemma}
\label{lem:InducedMapInFactor}
Let $\chi : X \to \Z_n$ be a morphism, where $X$ is minimal, and let $f : X \to X$ be an arbitrary morphism. Then there exists a morphism $f_n : \Z_n \to \Z_n$ such that $\chi \circ f = f_n \circ \chi$.
\end{lemma}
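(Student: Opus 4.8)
The plan is to first pin down what morphisms $\Z_n \to \Z_n$ can possibly be, and then to show that $\chi \circ f$ differs from $\chi$ only by such a morphism. Since $\Z_n$ carries the dynamics $a \mapsto a+1$, any morphism $g : \Z_n \to \Z_n$ must satisfy $g(a+1) = g(a)+1$, which forces $g(a) = a + g(0)$; that is, the only morphisms $\Z_n \to \Z_n$ are the translations $T^c : a \mapsto a+c$. Hence it suffices to produce a single constant $c \in \Z_n$ with $\chi(f(x)) = \chi(x) + c$ for all $x \in X$, and then set $f_n = T^c$.

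First I would note that $\chi \circ f : X \to \Z_n$ is again a morphism: $f$ commutes with $\sigma$ and $\chi$ intertwines $\sigma$ with $a \mapsto a+1$, so $(\chi \circ f)(\sigma(x)) = \chi(\sigma(f(x))) = (\chi \circ f)(x) + 1$. Thus both $\chi$ and $\chi \circ f$ are morphisms of $X$ into $\Z_n$ that ``add $1$'' under the shift. Next I would consider the difference map $\delta : X \to \Z_n$, $\delta(x) = \chi(f(x)) - \chi(x)$, computed in $\Z_n$. This $\delta$ is continuous, and the two $+1$ contributions cancel, so $\delta(\sigma(x)) = \delta(x)$; in other words $\delta$ is a shift-invariant continuous map, i.e.\ a morphism $X \to (\Z_n, \ID)$.

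The main (and essentially only) point is to conclude that $\delta$ is constant. Since $\Z_n$ is discrete, each level set $\delta^{-1}(c)$ is clopen, and shift-invariance of $\delta$ makes it $\sigma$-invariant; by minimality of $X$ the only $\sigma$-invariant clopen subsets are $\emptyset$ and $X$, so exactly one level set equals $X$ and $\delta \equiv c$ for some $c$. (Alternatively, a nonconstant $\delta$ would, after restriction to its image, furnish a nontrivial invariant of the transitive system $X$, contradicting Lemma~\ref{lem:InvariantNotTransitive}.) This is where all the content lies; the rest is routine unwinding of the morphism conditions.

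Finally I would take $f_n = T^c : a \mapsto a+c$, which is a morphism $\Z_n \to \Z_n$ by the first observation. Then $f_n(\chi(x)) = \chi(x) + c = \chi(f(x))$ for every $x \in X$, that is $\chi \circ f = f_n \circ \chi$, as required. I expect no difficulty beyond the constancy of $\delta$, and I note that no hypothesis on $\chi$ beyond being a morphism of the minimal system $X$ is used (in particular surjectivity of $\chi$ is not needed, since $f_n$ is defined on all of $\Z_n$).
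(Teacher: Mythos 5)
Your proof is correct and takes essentially the same route as the paper: both arguments reduce the lemma to showing that the difference $\chi(f(x)) - \chi(x)$ is a constant $c \in \Z_n$, whence $f_n$ is the translation $a \mapsto a + c$. The only (inessential) difference is how minimality enters: the paper fixes a point, notes the difference equals $k$ along its dense orbit, and extends by continuity, whereas you observe that the difference map is a continuous shift-invariant map into a discrete space, so its level sets are clopen invariant sets and minimality forces one of them to be all of $X$.
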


\begin{proof}
Let $\chi(x) = 0$ and $\chi(f(x)) = k$. Then $\chi(\sigma^j(x)) = j$ and $\chi(f(\sigma^j(x))) = k+j$ for all $j$. Since $\OC x = X$, we have $\chi(f(y)) = \chi(y) + k$ for all $y \in X$, so we can take $f_n(a) = a+k$ for all $a \in \Z_n$.
\end{proof}

In fact, the maps $f_n$ associated to $f$ determine it completely, see Proposition~1 in \cite{Ol13}.

\section{Toeplitz substitutions}

One way to generate Toeplitz sequences is the following type of substitution process. Let $w \in (S \dot\cup \{\bla\})^*$. We say $w$ is a \emph{partial word} over the alphabet $S$, and $\bla$ represents a \emph{missing coordinate}. We build a point $x(w)$ by a recursive process that starts with the point $\bla^\Z$ and fills the gaps of the current point with $w^\Z$, repeatedly. Let $\phi$ be the map that, given $y$ and $z$ in $(S \dot\cup \{\bla\})^\Z$, where both tails of $y$ contain infinitely many missing coordinates, writes $z$ in the missing coordinates of $y$. More precisely, if $\ell \geq 0$ is the least nonnegative coordinate of $y$ with $y_\ell = \bla$, we define
\[ \phi(y, z)_j = \left\{\begin{array}{ll}
y_j &\mbox{if } y_j \neq \bla, \\
z_0 &\mbox{if } j = \ell, \\
z_k &\mbox{if } {y_j = \bla} \, \wedge j > \ell \wedge k = |y_{[\ell,j-1]}|_{\bla}, \mbox{ and} \\
z_k &\mbox{if } {y_j = \bla} \, \wedge j < \ell \wedge k = |y_{[j+1,\ell]}|_{\bla}. \\
\end{array}\right. \]
If $w_0, w_{|w|-1} \in S$, then writing $\psi_w'(x) = \phi(x, w^\Z)$, we define
$x^i(w) = (\psi_w')^i(\bla^\Z)$ for all $i$, and define $x(w) = \lim_i x^i(w)$. It is easy to see that the limit $x(w)$ exists, and that indeed $x(w) \in S^\Z$, that is, this point contains no missing coordinates. It is also clearly a Toeplitz point. Our example is of the form $X_w = \overline{\mathcal{O}(x(w))} \subset S^\Z$. In this notation, the main result of this article is that the group $(\langle (5/2)^i \;|\; i \in \N \rangle, +) \leq (\Q, +)$ mentioned in the abstract is the automorphism group of the subshift $X_{1 \bla 0 \bla 0}$.

\newcommand{\zr}{\hspace{2.75pt}0 \hspace{2.75pt}}
\newcommand{\yx}{\hspace{2.75pt}1 \hspace{2.75pt}}

We note that defining, instead, $\psi_w(x) = \phi(w^\Z, x)$, we obtain the same limit $\lim_{n \rightarrow \infty} \psi_w^n(\bla^\Z)$. In fact, starting with the point $\bla^\Z$ and applying the operations $\psi_w'$ and $\psi_w$ in any order, we obtain the same limit point $x(w)$.

We note some obvious properties of the substitution map.

\begin{lemma}
\label{lem:PhiContinuity}
Let $w \in (S \cup \{\bla\})^*$, $|w| = p$ and $|w|_{\bla} = q$. Let $X = X_w$ and $\psi_w(x) = \phi(w^\Z, x) : S^\Z \to S^\Z$. Then $\psi_w$ is continuous and injective, and for any $k \in \Z$, we have the equality
\[ \sigma^{kp} \circ \psi_w = \psi_w \circ \sigma^{kq}. \]
\end{lemma}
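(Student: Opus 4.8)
The plan is to reduce all three claims to a single clean coordinatewise description of $\psi_w$ together with one combinatorial identity about the blank positions of $w^\Z$. First I would fix notation for the blanks. Since $w^\Z$ is $p$-periodic and carries exactly $q \geq 1$ blanks per period, its blank set $B = \{i \in \Z \;|\; w^\Z_i = \bla\}$ satisfies $B + p = B$ and $|B \cap [0,p-1]| = q$, so $B$ is bi-infinite. Enumerate it increasingly as $B = \{b_k \;|\; k \in \Z\}$ with $b_0 = \ell$ the least nonnegative blank, which matches the base case $z_0$ at $\ell$ in the definition of $\phi$. Reading that definition (with blanks to the left of $\ell$ receiving the negatively indexed entries of $x$) then yields the description
\[ \psi_w(x)_i = \begin{cases} w^\Z_i & \text{if } i \notin B, \\ x_k & \text{if } i = b_k. \end{cases} \]
The only combinatorial fact I need is that one full period of the pattern consumes exactly $q$ symbols of $x$:
\[ b_{k+mq} = b_k + mp \quad \text{for all } k, m \in \Z, \]
which holds because the half-open window $(b_k, b_k + p]$ contains exactly one period's worth of blanks, hence exactly $q$ of them, the largest being $b_k + p$.

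Given this description, continuity and injectivity are immediate. Each output coordinate $\psi_w(x)_i$ depends on at most one input coordinate — on none when $i \notin B$, and only on $x_k$ when $i = b_k$ — so $x \mapsto \psi_w(x)_i$ is either constant or a single coordinate projection; since $S$ is discrete, this makes $\psi_w$ continuous. Concretely, for any radius $M$ the value of $\psi_w(x)$ on $[-M,M]$ is determined by $x$ on the finite set $\{k \;|\; b_k \in [-M,M]\}$, which gives the metric estimate directly. For injectivity, $k \mapsto b_k$ is a bijection $\Z \to B$, so every coordinate of $x$ is recovered from its image via $x_k = \psi_w(x)_{b_k}$; hence $\psi_w(x) = \psi_w(x')$ forces $x = x'$.

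For the commutation identity, fix $k \in \Z$, recall $\sigma^{kp}(y)_i = y_{i+kp}$, and evaluate both sides at an arbitrary coordinate $i$, splitting on whether $i + kp \in B$. If $i + kp \notin B$, then $i \notin B$ by $p$-periodicity of $B$, so $\sigma^{kp}(\psi_w(x))_i = w^\Z_{i+kp} = w^\Z_i = \psi_w(\sigma^{kq}(x))_i$. If instead $i + kp = b_j \in B$, then $i = b_j - kp = b_{j-kq} \in B$ by the period–count identity, so
\[ \sigma^{kp}(\psi_w(x))_i = \psi_w(x)_{b_j} = x_j, \qquad \psi_w(\sigma^{kq}(x))_i = (\sigma^{kq}(x))_{j-kq} = x_j. \]
In both cases the two sides agree, and since $i$ was arbitrary the maps coincide.

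The only genuine obstacle here is bookkeeping: pinning down the enumeration $(b_k)_{k \in \Z}$ consistently with the base point $b_0 = \ell$ from the definition of $\phi$, and verifying the period–count identity $b_{k+q} = b_k + p$. Once that is in place, all three assertions follow by direct substitution. I would also flag that the bi-infinite bijectivity of $k \mapsto b_k$ — in particular that blanks left of $\ell$ are indexed by negative $k$ — is exactly what injectivity rests on, so it is worth stating the fourth clause of the definition of $\phi$ in that form explicitly before invoking it.
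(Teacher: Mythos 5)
Your proof is correct. The paper itself offers no argument for this lemma---it is introduced with ``We note some obvious properties of the substitution map'' and left unproved---so there is no proof of record to compare against; your explicit coordinatization is precisely the verification the author is implicitly relying on. The three ingredients you isolate do all the work: the description $\psi_w(x)_{b_k} = x_k$ with the blank set $B = \{b_k \;|\; k \in \Z\}$ enumerated increasingly and anchored at the least nonnegative blank, the period--count identity $b_{k+mq} = b_k + mp$, and the case split on whether $i + kp \in B$; continuity, injectivity, and the commutation relation then follow by substitution exactly as you write. One point you flag at the end deserves emphasis: the fourth clause of the paper's definition of $\phi$, read literally, assigns $z_k$ with $k = |y_{[j+1,\ell]}|_{\bla} > 0$ to blanks $j < \ell$, which would reuse the nonnegative entries of $z$, make $\psi_w$ non-injective, and break the commutation identity. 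Your reading---blanks to the left of $\ell$ receive the negatively indexed entries $z_{-k}$, so that $k \mapsto b_k$ is a bijection $\Z \to B$---is the only one under which the lemma is true, and it is clearly the intended meaning of ``writes $z$ in the missing coordinates of $y$.'' So your insistence on pinning down that enumeration before doing anything else is not mere bookkeeping but an implicit correction of a sign slip in the paper's definition, and your proof is the better for making it explicit.
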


We say $p > 0$ is a \emph{lazy period} of a partial point $y \in (S \cup \{\bla\})^\Z$ if $y_i = y_{i+kp}$ whenever $y_i, y_{i+kp} \in S$ for $i,k \in \Z$. The interpretation of having lazy period $p$ is that there is a way to fill the $\bla$-gaps so that the resulting point has period $p$. We note that having lazy periods $j$ and $j'$ does not imply the lazy period $\gcd(j, j')$ in general. For example, $(01\bla\bla\bla\bla)^\Z$ has lazy periods $2$ and $3$, but not $1$. However, the following is true.

\begin{lemma}
\label{lem:LazyPeriods}
If $y \in (S \cup \{\bla\})^\Z$ has period $j$ and lazy period $j'$, the $y$ has lazy period $\gcd(j, j')$.
\end{lemma}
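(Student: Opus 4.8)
The plan is to reduce everything to a single application each of the genuine period and the lazy period, connected by Bézout's identity. Write $d = \gcd(j, j')$. By the definition of lazy period, to show that $d$ is a lazy period I must verify that $y_i = y_{i+kd}$ whenever $y_i, y_{i+kd} \in S$, for arbitrary $i, k \in \Z$. So I fix such $i$ and $k$ and set $m = kd$, and the goal becomes $y_i = y_{i+m}$.

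The key observation is the asymmetry between the two hypotheses. The genuine period $j$ not only preserves values but also preserves being non-$\bla$, since $y_i = y_{i+j}$ holds as an equality in $S \cup \{\bla\}$; the lazy period $j'$, on the other hand, only equates values at positions that are already known to lie in $S$. Since $m$ is a multiple of $d = \gcd(j, j')$, it lies in the subgroup $j\Z + j'\Z = d\Z$, so I can write $m = sj + tj'$ for some integers $s, t$. I would then travel from $i$ to $i+m$ in two steps. First, the genuine period $j$ gives $y_{i+sj} = y_i$, and in particular $y_{i+sj} \in S$ because $y_i \in S$. Second, both $y_{i+sj}$ and $y_{(i+sj)+tj'} = y_{i+m}$ lie in $S$ (the former by the previous step, the latter by hypothesis), so the lazy period $j'$ applies and yields $y_{i+sj} = y_{i+m}$. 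Combining the two equalities gives $y_i = y_{i+m} = y_{i+kd}$, as desired.

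I do not expect any real obstacle here; the only subtlety is recognizing why the genuine period is indispensable. The counterexample $(01\bla\bla\bla\bla)^\Z$ preceding the lemma shows that two lazy periods need not combine through their gcd, precisely because a lazy period cannot by itself carry a filled coordinate to another filled coordinate -- it only constrains values once both endpoints are already known to be in $S$. The genuine period supplies exactly this missing ingredient, letting me shift a filled coordinate by a multiple of $j$ while staying filled, after which a single use of the lazy period closes the gap. The whole argument is therefore just Bézout together with this filled-ness-preservation property of genuine periods.
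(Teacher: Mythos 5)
Your proof is correct and takes essentially the same approach as the paper's: Bézout's identity writes the multiple of $\gcd(j,j')$ as an integer combination of $j$ and $j'$, the genuine period $j$ transports the value while preserving membership in $S$, and a single application of the lazy period $j'$ then closes the gap. The only cosmetic difference is that the paper phrases the argument as a contradiction (writing $\gcd(j,j') = mj - m'j'$ with $m, m' \in \N$), whereas you argue directly.
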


\begin{proof}
Suppose $y_i \neq y_{i+kp}$, where $y_i, y_{i+kp} \in S$ for $i,k \in \Z$ where $p = \gcd(j, j')$. Let $p = mj - m'j'$ where $m,m' \in \N$. Since $y$ has period $j$, we have $y_{i + kmj} = y_i$. Since we have lazy period $j'$, if $a = y_{i + kmj - km'j'} = y_{i + kp}$, we must have $a = \bla$ or $a = y_{i + kmj} = y_i$. A contradiction, since $a = y_{i+kp} \in S \setminus \{y_i\}$.
\end{proof}

\begin{lemma}
\label{lem:Important}
Let $w \in (S \cup \{\bla\})^p$ with $w_0,w_{|w|-1} \neq \bla$, and suppose $w^\Z$ has least lazy period $p$ for prime $p > 1$, and $|w|_{\bla} = q$. Then the essential periods of $x(w)$ are $p^i$ for $i \in \N$.
\end{lemma}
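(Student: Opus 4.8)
The plan is to read off the self-similar structure of $x(w)$ and track the least period of every coordinate. First I would record two consequences of the hypotheses. Writing $F = \{f \in [0,p-1] : w_f \neq \bla\}$ and $G = [0,p-1]\setminus F$, the assumption that $w^\Z$ has least lazy period $p$ means, since $p$ is prime, that $w^\Z$ does not have lazy period $1$: by Lemma~\ref{lem:LazyPeriods} every lazy period of the period-$p$ point $w^\Z$ is either a multiple of $p$ or forces lazy period $1$, so ``least lazy period $p$'' is equivalent to $w$ being non-constant on $F$, i.e.\ $\{w_f : f \in F\}$ contains at least two symbols. Also, since $w_0,w_{|w|-1}\neq\bla$ and $0 < q < p$ (there is at least one gap, as otherwise $x(w)$ would be periodic), we have $\gcd(p,q)=1$. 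I would then use the fixed-point identity $x(w) = \phi(w^\Z, x(w))$: if $\pi : \Z \to \{j : j \bmod p \in G\}$ is the order-preserving enumeration of the gap positions (with $\pi(0)$ the least nonnegative gap), then $x(w)_{\pi(n)} = x(w)_n$ for all $n$, and since the gap pattern has period $p$ we get $\pi(n+q) = \pi(n)+p$. Call a coordinate $j$ a \emph{stage-$i$} coordinate if $j \bmod p \in F$ (stage $1$) or, recursively, $j = \pi(n)$ with $n$ of stage $i-1$; every coordinate has a well-defined finite stage.

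The heart of the argument is the following claim, proved by induction on $i \geq 1$: every stage-$i$ coordinate has least period $p^i$, and moreover the set of its periods (the $M$ with $x(w)_{j+mM}=x(w)_j$ for all $m$) is \emph{exactly} the set of multiples of $p^i$. Strengthening the statement to describe the whole set of periods is what makes the induction go through, since for a single coordinate the periods need not be closed under $\gcd$. For the base case, a stage-$1$ coordinate $j$ with $j \equiv f \bmod p$ takes the value $w_f$ at every position $\equiv f$, so all multiples of $p$ are periods; and if $p \nmid M$ then $\gcd(M,p)=1$, so $j+mM \bmod p$ ranges over all residues, hitting $F$-positions of every value, which by non-constancy of $w|_F$ forbids $M$ from being a period. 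For the inductive step, let $j = \pi(n)$ with $n$ of stage $i-1$. The same residue argument shows any period $M$ of $j$ must satisfy $p \mid M$; writing $M = pL$ and using $\pi(n+mLq)=\pi(n)+mpL$ together with $x(w)_{\pi(\cdot)}=x(w)_{(\cdot)}$, one obtains that $M$ is a period of $j$ iff $Lq$ is a period of $n$, i.e.\ (by the induction hypothesis) iff $p^{i-1}\mid Lq$; since $\gcd(p,q)=1$ this holds iff $p^{i-1}\mid L$ iff $p^{i}\mid M$. Thus the periods of $j$ are exactly the multiples of $p^i$, completing the induction.

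From the claim, $\ske(p^i,x(w))_j \neq \bla$ iff $p^i$ is a period of $j$ iff the stage of $j$ is at most $i$; hence $\ske(p^i,x(w)) = x^i(w)$, the partial point obtained after $i$ substitution steps. Since $0<q<p$ forces every stage to be nonempty, $x^i(w)$ contains a coordinate of least period exactly $p^i$, so $x^i(w)$ (which has period $p^i$) is not fixed by $\sigma^{p^{i-1}}$ and therefore has least period exactly $p^i$; thus $|\OC{\ske(p^i,x(w))}| = p^i$ and each $p^i$ is an essential period. Conversely, for arbitrary $k$ the claim gives that $k$ is a period of $j$ iff $p^{v}\mid k$ for $v$ the stage of $j$, so $\ske(k,x(w))$ depends only on the $p$-adic valuation $v_p(k)$ and equals $x^{v_p(k)}(w)$; hence $|\OC{\ske(k,x(w))}| = p^{v_p(k)}$, which equals $k$ only when $k$ is itself a power of $p$. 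Therefore the essential periods are precisely $\{p^i : i \in \N\}$ (with $p^0=1$ arising from the empty skeleton $\bla^\Z$).

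The main obstacle I anticipate is the bookkeeping in the inductive step: pinning down the origin conventions so that $\pi(n+mLq)=\pi(n)+mpL$ and $x(w)_{\pi(n)}=x(w)_n$ hold on the nose, and verifying that the residue-covering argument legitimately forces $p \mid M$ for gap coordinates. The two hypotheses do all the real work here — primality of $p$ gives $\gcd(M,p)\in\{1,p\}$ and $\gcd(p^{i-1},q)=1$, while non-constancy of $w|_F$ is exactly what rules out spurious small periods — so the write-up mainly needs to make these two uses precise.
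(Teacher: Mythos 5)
Your proposal is correct, and it takes a genuinely different route from the paper's. The paper argues inside the substitution process: by induction it shows each partial point $x^j(w)$ has period $p^j$, and then---its key step---that every coordinate first filled at step $j+1$ has least \emph{lazy} period $p^{j+1}$ in $x^{j+1}(w)$, since lazy period $p^j$ would make that coordinate's values sweep through all letters of $w$ (using $\gcd(q^j,p^j)=1$), forcing $w^\Z$ to have lazy period $1$; from this it deduces $\ske(p^j, x(w))=x^j(w)$ and the essentiality of each $p^j$. You instead work directly in the limit point via the fixed-point identity $x(w)=\phi(w^\Z,x(w))$, and your inductive invariant is stronger: the \emph{exact} set of periods of a stage-$i$ coordinate is the set of multiples of $p^i$, proved by a one-level pull-back ($M=pL$ is a period of $\pi(n)$ iff $Lq$ is a period of $n$) in place of the paper's $q^j$-versus-$p^j$ computation. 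Both proofs use the hypotheses in the same roles (primality for the residue-covering step, non-constancy of $w$ on its domain to kill small periods, coprimality of $p$ and $q$ for the transfer between levels), but your strengthened invariant buys something the paper leaves implicit: knowing the full period set of every coordinate gives $\ske(k,x(w))=x^{v}(w)$, with $v$ the $p$-adic valuation of $k$, for \emph{arbitrary} $k$, hence an explicit proof that no $k$ other than a power of $p$ is essential, whereas the paper's lazy-period argument directly rules out only periods smaller than the relevant $p^j$, not non-multiples larger than it. Your remark that the period set of a single coordinate need not be gcd-closed, so the induction must carry the whole set rather than just the least period, is exactly the right technical point. In a full write-up you would still need to justify that every coordinate has a finite stage (equivalently that $x(w)\in S^\Z$, which the paper also treats as evident) and to note that $q\geq 1$ is tacitly assumed (the statement fails for $q=0$, as you observe); neither is a gap specific to your approach.
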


\begin{proof}
Let $x = x(w)$ and $x^i = x^i(w)$ for all $i$.

First, we verify by induction that $x^j$ has period $p^j$ for all $j$ (for which no assumptions on $p$ and $q$ are needed): This is true for $x^1$ because $x^1 = w^\Z$ and $|w| = p$. By definition, $x^{j+1} = \phi(x^j, w^\Z)$. Let $i \in \N$ be arbitrary. If $x^j_i \neq \bla$, then also $x^{j+1}_i = x^{j+1}_{i + p^{j+1}}$ because $p^j$ is a period of $x^j$. Since $x^j$ has period $p^j$, we have $|x^j_{[i, i + p^{j+1} - 1]}|_{\bla} = kp$ for $k = |x^j_{[i, i + p^j - 1]}|_{\bla}$. Since $w^\Z$ has period $p$, we then have $x^{j+1}_i = x^{j+1}_{i+p^{j+1}}$.

Next, we show by induction on $j$ that every coordinate $i$ such that $x^{j+1}_i \neq \bla$ and $x^j_i = \bla$ in fact has least lazy period $p^{j+1}$, and $|x^{j+1}_{[0,p^{j+1}-1]}|_{\bla} = q^{j+1}$. For $j = 0$, consider a coordinate $i$ such that $x^1_i \neq \bla$. It has period $p$, and thus lazy period $p$. If its least lazy period is $m$, then $m | p$ by the previous lemma, and thus $m = p$. By the assumption on $w$, we have $|x^1_{[0,p-1]}|_{\bla} = |w|_{\bla} = q$.

Inductively on $j$, consider a coordinate $i$ such that $x^{j+1}_i \neq \bla$ and $x^j_i = \bla$. Such a coordinate has period $p^{j+1}$ in $x^{j+1}$. Let $m$ be its least lazy period. Then $m | p^{j+1}$ as before, so that $m = p^\ell$ for some $\ell \leq j+1$. We claim that $m = p^{j+1}$. Suppose the contrary. Then, the coordinate $i$ in particular has lazy period $p^j$ in $x^{j+1}$, so that $x^{j+1}_{i+np^j} = \bla$ or $x^{j+1}_{i+np^j} = x^{j+1}_i$ for all $n$. By induction, and the periodicity of $x^j$, we have $|x^j_{[i+(n-1)p^j, i+(np^j-1]}|_{\bla} = q^j$ for all $n$. Then there exists $h \in [0,p-1]$ such that $x^{j+1}_i = w_h$ and $x^{j+1}_{i+np^j} = w_{h+nq^j}$ for all $n$, where $w$ is indexed modulo $p$. Since $\gcd(q^j, p^j) = 1$, $x^{j+1}_{i+np^j}$ takes on all coordinates of $w$. Thus, $w_i \in \{a, \bla\}^p$ for some $a \in S$, so that $w^\Z$ has lazy period $1$, a contradiction.

We have essentially shown that $\ske(p^j, x) = x^j$: Coordinates $x^j_i \neq \bla$ have period $p^j$ in $x^j$ and thus also $x$, so by definition, $\ske(p^j, x)_i = x^j_i$ for such $i$. On the other hand, if $x^j_i = \bla$, then $x^k_i \neq \bla$ for some $k > j$, and then $i$ has least lazy period $p^k > p^j$ in $x^k$, so that certainly its period in $x$ cannot be less than $p^k$, so that again $\ske(p^j, x)_i = \bla = x^j_i$.

We claim that $p^j$ is an essential period for each $j$. Otherwise, $\sigma^\ell(x^j)  = x^j$ for some $0 < \ell < p^j$. Take any coordinate $i$ with $x^j_i \neq \bla$ but $x^{j-1}_i \neq \bla$. Then $\sigma^\ell(x^j)  = x^j$ implies $i$ has period, in particular lazy period, $\ell$ in $x^j$. But the least lazy period of such $i$ in $x^j$ is $p^j$.
\end{proof}

When the assumptions of Lemma~\ref{lem:Important} hold, Lemma~\ref{lem:PhiContinuity} can also be strengthened.

\begin{lemma}
\label{lem:Homeomorphism}
Let $w \in (S \cup \{\bla\})^p$ with $w_0, w_{|w|-1} \neq \bla$, and suppose $w^\Z$ has least lazy period $p$ for prime $p > 1$, and $|w|_{\bla} = q$. Let $\chi : X \to \Z_p$ be the unique morphism with $x(w) \mapsto 0$. Then $\psi_w(x) = \phi(w^\Z, x) : S^\Z \to S^\Z$ restricts to a homeomorphism from $X$ to $\chi^{-1}(0)$.
\end{lemma}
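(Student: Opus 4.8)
The plan is to realize $\psi_w$ as an injective continuous map that intertwines $(X,\sigma^q)$ with $(\chi^{-1}(0),\sigma^p)$, and then to use minimality on both sides. Since $X$ is compact and $\chi^{-1}(0)$ is Hausdorff, once $\psi_w$ is shown to be a continuous bijection $X \to \chi^{-1}(0)$ the homeomorphism property is automatic; continuity and injectivity of $\psi_w$ are already given by Lemma~\ref{lem:PhiContinuity}. Hence the two things to establish are $\psi_w(X) \subseteq \chi^{-1}(0)$ and $\psi_w(X) \supseteq \chi^{-1}(0)$.

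First I fix the dynamics. By Lemma~\ref{lem:Important} the essential periods of $x(w)$ are the powers $p^i$, so by Lemma~\ref{lem:FiniteFactors} the finite factors of $X$ are exactly the $\Z_{p^j}$; since $p$ is prime and $0 < q < p$ we have $\gcd(p,q)=1$, and therefore $X$ and $\Z_q$ share no nontrivial finite factor. Theorem~\ref{thm:AllAboutPerp} then gives $X \perp \Z_q$, and Lemma~\ref{lem:FinitePerp} upgrades this to minimality of $(X,\sigma^q)$. In particular $\{\sigma^{kq}(x(w)) : k \in \Z\}$ is dense in $X$. Moreover $x(w)$ is a fixed point of $\psi_w$ (it is the limit $\lim_n \psi_w^n(\bla^\Z)$ and $\psi_w$ is continuous), while $\chi(\sigma^{kp}(x(w))) = kp = 0$ in $\Z_p$. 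Combining the relation $\sigma^{kp}\circ \psi_w = \psi_w \circ \sigma^{kq}$ of Lemma~\ref{lem:PhiContinuity} with $\psi_w(x(w))=x(w)$ yields $\psi_w(\sigma^{kq}(x(w))) = \sigma^{kp}(x(w)) \in \chi^{-1}(0)$ for every $k$. As these points are dense, $\psi_w$ is continuous, and $\chi^{-1}(0)$ is closed, I conclude $\psi_w(X) \subseteq \chi^{-1}(0)$; in the same stroke $\psi_w(X) \subseteq \OC{x(w)} = X$, so $\psi_w$ does map $X$ into itself.

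For surjectivity I show $(\chi^{-1}(0),\sigma^p)$ is minimal and note that $\psi_w(X)$ is a nonempty closed $\sigma^p$-invariant subset of it. Writing $F_c = \chi^{-1}(c)$, these fibers are closed, pairwise disjoint, cover $X$, and satisfy $\sigma(F_c) = F_{c+1}$ (by Lemma~\ref{lem:InducedMapInFactor} applied to $\sigma$, the induced map on $\Z_p$ is $a \mapsto a+1$), so each $F_c$ is $\sigma^p$-invariant; also $\sigma^p(\psi_w(X)) = \psi_w(\sigma^q(X)) = \psi_w(X)$. To see $(F_0,\sigma^p)$ is minimal, let $K \subseteq F_0$ be any nonempty closed $\sigma^p$-invariant set; then $\bigcup_{c=0}^{p-1}\sigma^c(K)$ is closed, nonempty and $\sigma$-invariant, hence equals $X$ by minimality of $(X,\sigma)$, and intersecting with $F_0$ (using $\sigma^c(K)\subseteq F_c$) forces $K = F_0$. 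Consequently $\psi_w(X)$, being a nonempty closed $\sigma^p$-invariant subset of the $\sigma^p$-minimal system $F_0$, must equal $F_0 = \chi^{-1}(0)$.

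Finally, $\psi_w|_X : X \to \chi^{-1}(0)$ is a continuous bijection from a compact space onto a Hausdorff space, hence a homeomorphism, which completes the proof. The only genuinely substantive inputs are the two minimality facts: minimality of $(X,\sigma^q)$, where coprimality of $p$ and $q$ together with the disjointness theory enters, and minimality of the fiber system $(\chi^{-1}(0),\sigma^p)$; everything else is bookkeeping with the intertwining identity. I expect the fiber-minimality argument to be the main step to get right, although it is a standard consequence of $X$ being minimal over its finite factor $\Z_p$.
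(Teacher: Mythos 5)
Your proof is correct, and its first half follows the paper's: both arguments reduce, via compactness and the continuity/injectivity from Lemma~\ref{lem:PhiContinuity}, to showing $\psi_w(X) = \chi^{-1}(0)$, and both obtain the inclusion $\psi_w(X) \subseteq \chi^{-1}(0)$ from minimality of $(X,\sigma^q)$ (which, as you make explicit, rests on Lemmas~\ref{lem:Important} and~\ref{lem:FiniteFactors}, Theorem~\ref{thm:AllAboutPerp} and Lemma~\ref{lem:FinitePerp}; the paper records this chain later as Lemma~\ref{lem:mnPerp}) combined with the intertwining relation $\sigma^{kp}\circ\psi_w = \psi_w\circ\sigma^{kq}$ and the fixed-point identity $\psi_w(x(w)) = x(w)$, which the paper leaves implicit in the phrase ``by the definition of the substitution process.'' Where you genuinely diverge is surjectivity. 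The paper argues combinatorially: given $y \in \chi^{-1}(0)$, it takes the unique sequence $y'$ with $\psi_w(y') = y$, approximates $y$ by shifts $\sigma^{j_i}(x(w))$ with $j_i \equiv 0 \bmod p$, and concludes from the inductive structure of $x(w)$ that the holes of the $p$-skeleton of $y$ are filled with words of $x(w)$, hence $y' \in X$. You instead prove that the fiber system $(\chi^{-1}(0),\sigma^p)$ is minimal---any nonempty closed $\sigma^p$-invariant $K \subseteq \chi^{-1}(0)$ gives the closed $\sigma$-invariant set $\bigcup_{c=0}^{p-1}\sigma^c(K)$, which equals $X$ by minimality, and intersecting with $\chi^{-1}(0)$ forces $K = \chi^{-1}(0)$---and then note that $\psi_w(X)$ is a nonempty, compact (hence closed), $\sigma^p$-invariant subset of that fiber, so it must be the whole fiber. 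Your route buys rigor: it replaces the paper's compressed ``holes are filled with words of $x$'' step by a standard dynamical fact (fibers over a finite factor of a minimal system are minimal under the return map), and it only uses properties already established (the forward inclusion, the intertwining, minimality of $X$). What it gives up is the explicit description of preimages that the paper's construction provides; otherwise the two proofs are of comparable length, and yours is arguably the more robust one.
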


\begin{proof}
Since $\psi_w$ is continuous and injective and $S^\Z$ is compact, we only need to show $\psi_w(X) = \chi^{-1}(0)$. First, suppose $y \in \chi^{-1}(0)$, so that $y \in X$ and $\chi(y) = 0$. Clearly, there is a unique sequence $y' \in S^\Z$ such that $y = \psi_w(y')$. We claim that $y' \in X$. Since $\chi$ is a local rule identifying the $n$-skeleton $x^1(w) = w^\Z$, and $y$ is a limit point of $x(w)$, we must have $\sigma^{j_i}(x(w)) \rightarrow y$ for some $j_1, j_2, \ldots$. Since $\chi(y) = 0$, we may restrict to a subsequence so that $\chi(\sigma^{j_i}(x(w))) = 0$. Thus, by the inductive definition of $x(w)$, the $\bla$-coordinates of the $p$-skeleton of $y$ are filled with words of $x$, so that $y' \in X$.

Then, let us show $\psi_w(y) \in \chi^{-1}(0)$ for all $y \in X$. Let $y \in X$ be arbitrary. By Lemma~\ref{lem:FinitePerp}, $\sigma^q$ is minimal on $X$, so that there is a sequence $j_1, j_2, \ldots$ such that $\sigma^{j_i q}(x(w)) \rightarrow y$. Then by the definition of the substitution process, we have $\psi_w(y) = \lim_{i \rightarrow \infty} \sigma^{j_i p} x(w)$.
\end{proof}

We of course have infinitely many holes after any step of the substitution process. Under a simplifying assumption, these holes become separated in a uniform way.\footnote{This shows that the fibers of the projection to the maximal equicontinuous factor are either singletons or of cardinality $2$.}

\begin{lemma}
\label{lem:Sparse}
Let $w \in (S \cup \{\bla\})^p$ with $w_0 \neq \bla$ and $\bla\bla \not\sqsubset w$. Then if $i' > i$ and $x^j(w)_{[i, i']} \in \bla S^{i'-i-1} \bla$, we have $i'-i \geq 2^j$.
\end{lemma}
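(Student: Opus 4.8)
The plan is to prove the statement by induction on $j$, where I read the conclusion as the assertion that every \emph{gap between consecutive holes} of $x^j(w)$ — that is, every pair $i < i'$ with $x^j(w)_i = x^j(w)_{i'} = \bla$ and $x^j(w)_c \in S$ for all $i < c < i'$ — has length $i' - i \geq 2^j$. The base case $j = 0$ is immediate: since $x^0(w) = \bla^\Z$, the only pairs satisfying the hole condition are adjacent coordinates, so the gap is $1 = 2^0$. (When $|w|_\bla = 0$ the hypothesis of the lemma becomes vacuous for $j \geq 1$, and the induction below handles this case trivially, as there are then no surviving holes to separate.)

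For the inductive step I would exploit the defining property of the substitution, namely that $x^{j+1}(w) = \phi(x^j(w), w^\Z)$ fills the holes of $x^j(w)$, taken in increasing order of position, with the letters of $w^\Z$ in order. Enumerating the holes of $x^j(w)$ as an increasing sequence $(h_k)_{k \in \Z}$, the hole $h_k$ receives the letter $w_{k \bmod p}$ under the alignment fixed by $\phi$ (the precise offset is immaterial to what follows), and $h_k$ survives as a hole of $x^{j+1}(w)$ exactly when $w_{k \bmod p} = \bla$. In particular, the holes of $x^{j+1}(w)$ form a subset of those of $x^j(w)$, and consecutive holes of $x^j(w)$ are assigned consecutive letters of $w^\Z$.

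The key observation is that $w^\Z$ contains no two adjacent occurrences of $\bla$: internal adjacencies are excluded by the hypothesis $\bla\bla \not\sqsubset w$, and the wraparound junction between $w_{p-1}$ and $w_0$ is harmless because $w_0 \neq \bla$. Hence, if $h_k$ and $h_{k'}$ with $k < k'$ are two \emph{consecutive} surviving holes, then $w_{k \bmod p} = w_{k' \bmod p} = \bla$ forces at least one intermediate index to carry a non-$\bla$ letter, so $k' - k \geq 2$. Applying the inductive hypothesis to each gap $h_{m+1} - h_m \geq 2^j$ of $x^j(w)$ and telescoping gives $h_{k'} - h_k = \sum_{m=k}^{k'-1}(h_{m+1} - h_m) \geq (k'-k)\,2^j \geq 2 \cdot 2^j = 2^{j+1}$, which is the desired bound and completes the induction. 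There is no genuinely hard step here; the argument is a clean doubling induction, and the only points needing care are the bookkeeping that consecutive holes of $x^j(w)$ get consecutive letters of $w^\Z$, together with the observation that two surviving holes must be separated by at least one killed hole precisely because $w^\Z$ has no factor $\bla\bla$.
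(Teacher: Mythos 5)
Your proof is correct and takes essentially the same approach as the paper: the same doubling induction, hinging on the observation that $w_0 \neq \bla$ and $\bla\bla \not\sqsubset w$ together give $\bla\bla \not\sqsubset w^\Z$, so that two consecutive surviving holes must be separated by at least two holes of the previous stage. Your version merely spells out the bookkeeping (the enumeration of holes, the survival criterion, and the telescoping sum) that the paper's two-line proof leaves implicit.
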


\begin{proof}
Note that $w_0 \neq \bla$ and $\bla\bla \not\sqsubset w$ together imply $\bla\bla \not\sqsubset w^\Z$.

Since $\bla\bla \not\sqsubset w^\Z$, the minimal distance (the quantity $i' - i$) between two distinct symbols $\bla$ in $x^1(w)$ is at least $2$. If the minimal distance between two symbols $\bla$ in $x^j(w)$ is at least $2^j$, then the minimal distance between two symbols $x^{j+1}(w)$ is at least $2^{j+1}$ since $\bla\bla \not\sqsubset w^\Z$.
\end{proof}

\section{Groups}

We discuss the groups we will implement as endomorphism monoids.

\begin{definition}
\label{def:A}
For $m, n \in \N$, we define a subgroup of $(\Q, +)$ by
\[ A(n, m) = \left\langle \left(\frac{n}{m}\right)^i \;|\; i \in \N \right\rangle. \]
\end{definition}

\begin{lemma}
The group $A(n, m)$ is not finitely generated if $m \nmid n$.
\end{lemma}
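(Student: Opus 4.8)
The plan is to find a single prime whose $p$-adic valuation detects the failure of finite generation. First I would record the general fact that any finitely generated subgroup $G \leq (\Q, +)$ has, for every prime $p$, its elements' $p$-adic valuations bounded below: if $G = \langle g_1, \dots, g_k \rangle$ and $V = \min_j v_p(g_j)$, then for integers $c_j$ the ultrametric inequality gives $v_p(\sum_j c_j g_j) \geq \min_j (v_p(c_j) + v_p(g_j)) \geq V$, since $v_p(c_j) \geq 0$. So it suffices to exhibit one prime $p$ for which $A(n,m)$ contains elements of arbitrarily negative $p$-adic valuation.

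Next I would translate the hypothesis into valuation language. The divisibility $m \mid n$ is equivalent to $v_p(m) \leq v_p(n)$ for every prime $p$; hence $m \nmid n$ provides a prime $p$ with $v_p(m) > v_p(n)$, so that $v_p(n/m) = v_p(n) - v_p(m) =: -e$ with $e > 0$. For this $p$ the generators satisfy $v_p((n/m)^i) = -ie \to -\infty$ as $i \to \infty$. Since each $(n/m)^i$ lies in $A(n,m)$, the group contains elements of unbounded-below $p$-adic valuation, contradicting the first paragraph, and therefore $A(n,m)$ is not finitely generated.

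The hard part is not any single step but choosing the right invariant; once $p$-adic valuation is in play, the two facts used — the elementary equivalence $m \mid n \iff \forall p\; v_p(m) \leq v_p(n)$, and the lower bound on the valuation of an integer combination of finitely many rationals — are both routine. For completeness I would note an equally short valuation-free route: a finitely generated subgroup of $\Q$ is torsion-free of rank $1$, hence cyclic, hence contained in $\tfrac1b\Z$ for some $b$, so every element has lowest-terms denominator dividing $b$; but reducing $n/m$ to lowest terms $n'/m'$ with $m' > 1$ (which is precisely what $m \nmid n$ yields), the element $(n'/m')^i$ has lowest-terms denominator $(m')^i$, unbounded as $i \to \infty$, giving the same contradiction.
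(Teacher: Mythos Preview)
Your proof is correct and follows essentially the same route as the paper: both arguments pick a prime $p$ with $v_p(m)>v_p(n)$, observe that $(n/m)^i$ has unbounded $p$-adic absolute value (equivalently, $p$-adic valuation tending to $-\infty$), and invoke the ultrametric inequality to see that a finitely generated subgroup of $\Q$ must be $p$-adically bounded. The only differences are cosmetic---you phrase things in terms of $v_p$ while the paper uses $|\cdot|_p$, and you spell out the ultrametric bound for integer combinations more explicitly---and your additional denominator-based remark is a nice bonus not present in the paper.
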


\begin{proof}
If $m \nmid n$, then $m$ has a prime divisor $p$ with $p^k | m$ but $p^k \nmid n$ for some $k$. Then $|\frac{n}{m}|_p = \ell \geq p$, where $|\cdot|_p$ denotes the $p$-adic norm. Then $|\left(\frac{n}{m}\right)^i|_p = \ell^i$, so that $A(n, m)$ is not bounded in the $p$-adic norm. The result follows, since the $p$-adic numbers under addition form an ultrametric group in the sense that $|a + b|_p \leq \max(|a|_p, |b|_p)$
\end{proof}

\begin{definition}
Let $G$ be an abelian group with generators $\{g_i \;|\; i \in \N\}$ such that $n g_i = m g_{i+1}$ for all $i$. Then $G$ is said to be \emph{$(n, m)$-lifting}.
\end{definition}

\begin{lemma}
The group $A(n, m)$ is 
$(n, m)$-lifting.
\end{lemma}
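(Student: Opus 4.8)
The plan is to use the obvious candidate generators coming straight from the definition of the group. Set $g_i = (n/m)^i$ for each $i \in \N$. By Definition~\ref{def:A} these elements are exactly the chosen generating set of $A(n,m)$, so the only content in the claim is to verify the one defining relation of an $(n,m)$-lifting group, namely $n g_i = m g_{i+1}$ for all $i$. Since $A(n,m) \leq (\Q, +)$ it is automatically abelian, so that half of the definition needs no argument.

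The verification will be a single computation inside $(\Q,+)$. Writing $kg$ for the $k$-fold sum of $g$ with itself (the group-theoretic multiple appearing in the definition), I would observe that for a rational $g$ this coincides with ordinary multiplication $kg$ in the field $\Q$, and then compute
\[ m g_{i+1} = m \left(\frac{n}{m}\right)^{i+1} = m \cdot \frac{n}{m} \cdot \left(\frac{n}{m}\right)^i = n \left(\frac{n}{m}\right)^i = n g_i. \]
This establishes the relation for every $i$, so the family $\{g_i\}$ witnesses that $A(n,m)$ is $(n,m)$-lifting.

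There is essentially no obstacle: the only point requiring any care is the bookkeeping between the two different multiplications in play, the integer multiples $n g_i$ and $m g_{i+1}$ demanded by the definition (repeated addition in the group) versus the field multiplication used to form the powers $(n/m)^i$. Once one notes these agree on rationals, the displayed cancellation of the single factor $m$ against the denominator of $n/m$ finishes the proof immediately.
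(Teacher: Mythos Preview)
Your proof is correct and essentially identical to the paper's: both choose $g_i = (n/m)^i$ and verify $n g_i = m g_{i+1}$ by the same one-line rational computation. Your extra remarks about abelianity and the agreement of integer multiples with field multiplication are fine and do not constitute a different approach.
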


\begin{proof}
We choose generators $g_i = \left(\frac{n}{m}\right)^i$, and then
\[ n g_i = m g_{i+1} \iff \left(\frac{n}{m}\right) g_i = g_{i+1} \iff \left(\frac{n}{m}\right)\left(\frac{n}{m}\right)^i  = \left(\frac{n}{m}\right)^{i+1}. \]
\end{proof}


\begin{lemma}
\label{lem:GroupStuff}
Let $(n, n', m) \in \N^3$ satisfy
\begin{equation}
\label{eq:Nice}
n = 2n' + 1, 1 < m \leq n' \mbox{ and } \gcd(m, n) = 1,
\end{equation}
and let $G = \langle g_i \;|\; i \in \N \rangle$ be $(n, m)$-lifting. Then every element $g \in G$ can be written as
\[ g = k_1 g_1 + k_2 g_2 + \ldots + k_j g_j, \]
where $k_i \in [-n', n']$ for all $i$, and $k_j \neq 0$. In the group $A(n, m)$, there is a unique such representation for each $g \in G$. Conversely, if there is a unique such representation for all $g \in G$, then $G$ is isomorphic to $A(n, m)$.
\end{lemma}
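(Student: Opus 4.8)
The plan is to read the balanced expansion $g = k_1 g_1 + \cdots + k_j g_j$ with $k_i \in [-n',n']$ as a balanced base-$n$ representation, in which the lifting relation $n g_i = m g_{i+1}$ plays the role of a carry rule, and to prove the three assertions in turn: existence of such a representation in any $(n,m)$-lifting group, uniqueness in $A(n,m)$, and the converse. For existence I would start from any finite integer combination $g = \sum_i a_i g_i$ (the $g_i$ generate $G$) and reduce the coefficients from the lowest index upwards. Since $n = 2n'+1$, every integer $c$ has a unique decomposition $c = qn + r$ with $r \in [-n',n']$; at index $i$ I replace the current coefficient $c$ by its balanced residue $r$ and, using $qn\,g_i = qm\,g_{i+1}$, add the carry $qm$ to the coefficient at index $i+1$. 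Processing indices in increasing order leaves each treated coefficient in $[-n',n']$ and never revisits it, so the only thing left to check is that the carries do not propagate to infinity.

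Termination is the step I expect to be the main obstacle. Once the process passes the largest index $N$ occurring in the original combination, the coefficient $c_{i+1}$ at index $i+1$ equals the incoming carry $q_i m$, and from $|q_i n| = |c_i - r_i| \le |c_i| + n'$ one gets $|c_{i+1}| \le \tfrac{m}{n}(|c_i| + n')$. The affine map $t \mapsto \tfrac{m}{n}(t+n')$ has slope $m/n < 1$ (as $m \le n' < n$) and fixed point $\tfrac{m n'}{n-m}$; the hypotheses $n = 2n'+1$ and $m \le n'$ give $n - m \ge n'+1$, so this fixed point is strictly below $n'$. Hence the carry magnitudes are driven below $n'$ in finitely many steps, and a coefficient already in $[-n',n']$ produces carry $q = 0$ and stops the process. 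Dropping a possible trailing zero yields a representation with $k_j \ne 0$; the case $g = 0$ is the empty representation.

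Uniqueness in $A(n,m)$ should be a clean divisibility argument. Given two balanced representations of the same element, the coefficient differences $d_i = k_i - k_i'$ satisfy $|d_i| \le 2n' = n-1 < n$ and $\sum_i d_i (n/m)^i = 0$. Supposing some $d_i \ne 0$, let $\ell$ be the least such index; clearing denominators by the top power of $m$ and dividing by $n^\ell$ gives an integer identity $\sum_{i \ge \ell} d_i\, n^{\,i-\ell} m^{\,h-i} = 0$. Reducing modulo $n$ kills every term with $i > \ell$ and leaves $d_\ell m^{\,h-\ell} \equiv 0 \pmod n$; since $\gcd(m,n) = 1$ by \eqref{eq:Nice}, this forces $d_\ell \equiv 0 \pmod n$, and $|d_\ell| < n$ then gives $d_\ell = 0$, a contradiction. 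Hence all $d_i = 0$ and the representation is unique; the same computation with $d_i \in [-n',n']$ shows the only balanced representation of $0$ is the trivial one.

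For the converse I would route everything through the universal $(n,m)$-lifting group $F = \langle \gamma_i \mid n\gamma_i = m\gamma_{i+1}\rangle$ (abelian). Both $A(n,m)$ and the given $G$ are $(n,m)$-lifting, so there are surjections $\pi_A : F \to A(n,m)$ and $\pi_G : F \to G$ sending $\gamma_i$ to $(n/m)^i$ and to $g_i$ respectively. The carrying argument applies verbatim inside $F$, so every element of $F$ has a balanced representation; and whenever $\pi_A$ or $\pi_G$ annihilates such a representation, its image is a balanced representation of $0$ in $A(n,m)$ or in $G$, which by uniqueness (proved above for $A(n,m)$, assumed for $G$) must be trivial. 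Hence both $\pi_A$ and $\pi_G$ are injective, so $F \cong A(n,m)$ and $F \cong G$, giving $G \cong A(n,m)$. The merit of passing through $F$ is that it makes the resulting bijection $\pi_A \circ \pi_G^{-1} : G \to A(n,m)$ automatically a homomorphism, avoiding a direct verification that balanced addition-with-carry respects the group law.
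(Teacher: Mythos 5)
Your proposal is correct, and for the first two assertions it follows essentially the paper's route: existence by the same carrying/normalization process using $n g_i = m g_{i+1}$ (your fixed-point analysis of the affine map $t \mapsto \tfrac{m}{n}(t+n')$ usefully fills in the paper's one-line claim that the process ``terminates because $m \leq n'$''), and uniqueness in $A(n,m)$ by the same clear-denominators-and-reduce-modulo-$n$ computation using $\gcd(m,n)=1$. Your uniqueness variant is in fact slightly cleaner: by working directly with the coefficient differences $d_i \in [-(n-1),n-1]$ you avoid the paper's step of re-normalizing the difference of two normal forms, which tacitly uses that the lowest nonzero coefficient survives normalization (its balanced residue mod $n$ is nonzero because $|d_\ell| < n$). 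Where you genuinely diverge is the converse. The paper defines the map $k_1 g_1 + \cdots + k_j g_j \mapsto k_1 h_1 + \cdots + k_j h_j$ directly on normal forms, gets bijectivity from existence and uniqueness on both sides, and argues additivity by noting that summing and re-normalizing applies ``the same transformations'' on both sides --- a correct but somewhat informal verification. You instead factor both $A(n,m)$ and $G$ through the universal abelian group $F$ presented by the lifting relations, and show both projections $\pi_A$, $\pi_G$ are injective because any balanced representation of an element of the kernel maps to a balanced representation of $0$, which is trivial by uniqueness (proved in $A(n,m)$, hypothesized in $G$). This makes the isomorphism $\pi_A \circ \pi_G^{-1}$ a homomorphism for free, at the mild cost of introducing the presented group $F$; both routes are valid, and yours trades the paper's hands-on additivity check for a small amount of extra abstract machinery.
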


\begin{proof}
All elements of $G$ can be put into such form by first adding a suitable multiple of $n g_1 - m g_2 = 0$ to reduce $k_1$, then $n g_2 - m g_3 = 0$ to reduce $k_2$, and so on. This process eventually terminates because $m \leq n'$.

If the form is not unique for some element of the group, then by subtracting two distinct but equivalent forms and putting the result in the normal form, we obtain
\[ k_j g_j + k_{j+1} g_{j+1} + \cdots + k_{j'} g_{j'} = 0 \]
where $k_j \neq 0$, $k_{j'} \neq 0$ and $k_i \in [-n', n']$ for all $i$, and $j \leq j'$, that is, there is at least one nonzero coefficient. Letting $G = A(n, m)$ with generators assigned as in the previous lemma, the equation above cannot hold:
\begin{align*}
(k_j g_j + \cdots + k_{j'} g_{j'})m^{j'} &=
\left(k_j\left(\frac{n}{m}\right)^{j} + k_{j+1}\left(\frac{n}{m}\right)^{j+1} + \cdots + k_{j'}\left(\frac{n}{m}\right)^{j'}\right)m^{j'} \\
&= k_j n^j m^{j'-j} + k_{j+1} n^{j+1} m^{j'-j-1} + \cdots + k_{j'} n^{j'} \\
&\equiv k_j n^j m^{j'-j} \bmod{n^{j+1}} \\
&\not\equiv 0 \bmod{n^{j+1}}
\end{align*}
since $\gcd(m, n) = 1$ and $k_j \in [-n',n']$.

Let $G = A(n, m)$, and let $H = \langle h_i \;|\; i \in \N \rangle$ be another $(n, m)$-lifting group where the representations in normal form are unique. We define a map $\phi$ from $G$ to $H$ by mapping $g_i \mapsto h_i$, and in general mapping
\[ k_1 g_1 + k_2 g_2 + \ldots + k_j g_j \mapsto k_1 h_1 + k_2 h_2 + \ldots + k_j h_j, \]
when the left side is in the normal form. It is clear that this is a bijection between the groups, since we assumed that the representations exist and are unique on both sides. To see that it is a homomorphism, note that if $g, h \in G$, then the unique normal form for $g + h$ is obtained by summing the components of the normal forms of $g$ and $h$, and applying the algorithm described in the first paragraph of the proof. By applying the same transformations to the representation of $\phi(g) + \phi(h)$ obtained by summing the representations of $\phi(g)$ and $\phi(h)$, we obtain precisely $\phi(g + h)$, and thus $\phi(g + h) = \phi(g) + \phi(h)$.
\end{proof}

\section{The example and its automorphism group}
\label{sec:InterestingEndos}

We can now construct our example of a Toeplitz subshift whose automorphism group is not finitely generated. For concreteness, we construct the group $A(5, 2)$. More generally, for any triple $(p,p',q)$ satisfying \eqref{eq:Nice} and $p$ prime, we will find a Toeplitz subshift whose automorphism group (in fact, the whole endomorphism monoid) is isomorphic to the group $A(p, q)$.\footnote{The group $A(5, 2)$ is the simplest interesting example obtained like this. The construction also applies to $A(3, 1)$, but this group is isomorphic to $\Z$.}

We make some standing assumptions for the rest of this section. We fix a triple $(p, p', q)$ satisfying \eqref{eq:Nice} with $p$ prime. We also fix a word $w \in (S \cup \{\bla\})^p$ and the subshift $X = X_w$, with the properties
\begin{itemize}
\item $w_0, w_{|w|-1} \neq \bla$ and $\bla\bla \not\sqsubset w$,
\item $w^\Z$ has least lazy period $p$,
\item $|w|_{\bla} = q$, and
\item $\ID_X$ is the only radius $0$ block map, or \emph{symbol map}, on $X$.
\end{itemize}
Since $p$ is prime, $w^\Z$ has least lazy period $p$ as long as $w$ contains two distinct letters. We also have $\bla\bla \not\sqsubset w^\Z$. One possible word is $w = 1(\bla 0)^q0^{2(p'-q)}$, and setting $p=5$, $q=2$ we obtain $w = 1 \bla 0 \bla 0$. Let $\chi : X \to \Z_p$ be the unique factor map with $x(w) \mapsto 0$. In the following, $w$, $X$, $p$, $p'$, $q$ and $\chi$ are thought of as fixed.

\begin{lemma}
\label{lem:PeriodicStructureOfx}
With the standing assumptions, the finite factors of $X$ are the systems $\Z_{p^\ell}$ where $\ell \in \N$.
\end{lemma}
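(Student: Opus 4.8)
The plan is to combine the two earlier results about finite factors of Toeplitz subshifts: Lemma~\ref{lem:FiniteFactors}, which says that $\Z_n$ is a factor of a Toeplitz subshift $X$ if and only if $n \mid k$ for some essential period $k$ of $X$, and Lemma~\ref{lem:Important}, which (under exactly our standing assumptions on $w$) computes the essential periods of $x(w)$ to be precisely the powers $p^i$ for $i \in \N$. Let me check that the hypotheses of Lemma~\ref{lem:Important} are met: the standing assumptions give $w_0, w_{|w|-1} \neq \bla$, that $w^\Z$ has least lazy period $p$ with $p$ prime and $p > 1$ (since $p = 2p'+1 \geq 3$ by \eqref{eq:Nice}), and $|w|_{\bla} = q$. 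These are exactly the hypotheses of Lemma~\ref{lem:Important}, so the essential periods of $x(w)$, hence of $X = X_w$, are exactly $\{p^i \mid i \in \N\}$.

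First I would invoke Lemma~\ref{lem:FiniteFactors} to reduce the claim to a statement purely about divisibility: $\Z_n$ is a factor of $X$ if and only if $n \mid p^i$ for some $i \in \N$. Since $p$ is prime, the divisors of the powers of $p$ are exactly the powers of $p$ themselves (including $p^0 = 1$). Concretely, $n \mid p^i$ for some $i$ forces every prime dividing $n$ to be $p$, so $n = p^\ell$ for some $\ell \geq 0$; conversely any $n = p^\ell$ divides $p^\ell$, which is an essential period. This establishes that the finite minimal systems $\Z_n$ arising as factors are precisely those with $n = p^\ell$, $\ell \in \N$.

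There is essentially no hard obstacle here: the statement is a direct corollary of the two cited lemmas together with the elementary number-theoretic fact that the divisor set of $\{p^i\}$ is $\{p^\ell\}$ when $p$ is prime. The only point requiring a word of care is the logical direction and the indexing convention for $\ell$ (whether $\ell = 0$, giving the trivial factor $\Z_1$, is included); I would simply note that $\Z_1$ is the trivial one-point factor, corresponding to $p^0 = 1$, and is a factor of any system, so it is consistent to let $\ell$ range over all of $\N$. I would write the argument as the short chain: essential periods are $\{p^i\}$ by Lemma~\ref{lem:Important}, divisors of these are $\{p^\ell\}$ since $p$ is prime, and Lemma~\ref{lem:FiniteFactors} translates the divisibility condition into the factor condition, completing the proof.
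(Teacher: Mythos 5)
Your proof is correct and follows exactly the paper's own argument: check the standing assumptions match the hypotheses of Lemma~\ref{lem:Important} to get that the essential periods are the powers $p^i$, then apply Lemma~\ref{lem:FiniteFactors} to translate this into the statement about finite factors. The extra detail you supply (that divisors of prime powers are again prime powers) is left implicit in the paper but is exactly the intended bridge between the two lemmas.
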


\begin{proof}
The word $w$ satisfies the assumptions of Lemma~\ref{lem:Important}, so the essential periods of $x(w)$ are the numbers $p^\ell$. The result follows from Lemma~\ref{lem:FiniteFactors}.
\end{proof}

\begin{lemma}
\label{lem:mnPerp}
With the standing assumptions, we have that $X \perp \Z_{q^j}$ for all $j$. In particular, $(X, \sigma^{q^j})$ is minimal for all $j$.
\end{lemma}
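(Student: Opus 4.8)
The plan is to reduce the disjointness statement to the ``no common nontrivial finite factor'' criterion of Theorem~\ref{thm:AllAboutPerp}, and then read off the minimality clause from Lemma~\ref{lem:FinitePerp}. Both $X$ and $\Z_{q^j}$ are nontrivial Toeplitz subshifts: $X$ is infinite and aperiodic, while $\Z_{q^j}$ is a genuine finite rotation system for $j \geq 1$ since $q > 1$ by \eqref{eq:Nice} (the case $j = 0$ is trivial, as $\Z_1$ is a one-point system and $\sigma^{q^0} = \sigma$). Hence the theorem applies, and it suffices to verify that $X$ and $\Z_{q^j}$ share no nontrivial finite factor.

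First I would recall from Lemma~\ref{lem:PeriodicStructureOfx} that the finite factors of $X$ are exactly the systems $\Z_{p^\ell}$ with $\ell \in \N$; equivalently, by Lemma~\ref{lem:FiniteFactors}, $\Z_m$ is a factor of $X$ precisely when $m$ divides some power of $p$, i.e.\ when $m$ is itself a power of $p$. On the other side, the finite factors of the rotation system $\Z_{q^j}$ are the systems $\Z_d$ with $d \mid q^j$: any factor map $\Z_{q^j} \to \Z_m$ commuting with $x \mapsto x+1$ is a surjective morphism of rotation systems, which forces $m \mid q^j$. This point is elementary but worth stating explicitly, since the theorem is phrased in terms of finite factors and I want the coprimality hypothesis to bear against the correct list.

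Next I would combine the two descriptions. A common nontrivial finite factor would be some $\Z_m$ with $m > 1$ that is simultaneously a power of $p$ and a divisor of $q^j$. But $\gcd(p, q) = 1$ by \eqref{eq:Nice}, hence $\gcd(p^\ell, q^j) = 1$ for every $\ell$, so the only common factor is the trivial $\Z_1$. Thus $X$ and $\Z_{q^j}$ have no common nontrivial finite factor, and the equivalence $\eqref{nofactors} \Leftrightarrow \eqref{perp}$ of Theorem~\ref{thm:AllAboutPerp} yields $X \perp \Z_{q^j}$.

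Finally, the ``in particular'' clause is immediate: $X$ is minimal, so Lemma~\ref{lem:FinitePerp} converts $X \perp \Z_{q^j}$ into the minimality of $(X, \sigma^{q^j})$. I do not expect any serious obstacle in this argument—it is essentially bookkeeping on top of the machinery already in place. The only step requiring a line of genuine care is the identification of the finite factors of $\Z_{q^j}$; once that is pinned down, the coprimality $\gcd(p, q) = 1$ closes the argument and the rest is a direct appeal to Theorem~\ref{thm:AllAboutPerp} and Lemma~\ref{lem:FinitePerp}.
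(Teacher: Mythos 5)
Your proposal is correct and follows essentially the same route as the paper's own proof: identify the finite factors of $X$ as the $\Z_{p^\ell}$ and those of $\Z_{q^j}$ as the $\Z_d$ with $d \mid q^j$, invoke $\gcd(p,q)=1$ to rule out common nontrivial factors, and then apply Theorem~\ref{thm:AllAboutPerp} followed by Lemma~\ref{lem:FinitePerp}. Your extra care with the $j=0$ case and with justifying the factor list of $\Z_{q^j}$ is fine but does not change the argument.
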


\begin{proof}
The finite factors of $\Z_{q^j}$ are the systems $\Z_\ell$ where $\ell | q^j$. The finite factors of $X$, on the other hand, are the systems $\Z_{p^i}$ for $i \in \N$ by Lemma~\ref{lem:FiniteFactors}. Since $\gcd(q,p) = 1$, there are no common finite factors, and then $X \perp \Z_{q^j}$ by Theorem~\ref{thm:AllAboutPerp}. The latter claim follows from Lemma~\ref{lem:FinitePerp}.
\end{proof}

\begin{lemma}[Pasting Lemma \cite{Mu00}]
Suppose $A_1, A_2, \ldots, A_k$ are closed subsets of a topological space $X$, $f_i : A_i \to Y$ are continuous functions on the $A_i$, and for all $i, j \in [1, k]$, we have $f_i(x) = f_j(x)$ for all $x \in A_i \cap A_j$. Then the function $f : \bigcup_i A_i \to Y$ defined by $f(x) = f_i(x)$ for all $i \in [1,k]$ and $x \in A_i$ is continuous.
\end{lemma}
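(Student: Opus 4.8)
The plan is to reduce continuity of $f$ to the standard characterization in terms of preimages of closed sets, so that the closedness of the pieces $A_i$ can be exploited directly. First I would observe that $f$ is well-defined: the compatibility hypothesis $f_i(x) = f_j(x)$ on each overlap $A_i \cap A_j$ guarantees that the value $f(x)$ does not depend on which piece $A_i$ containing $x$ is chosen to compute it. This is the only place the compatibility condition is used.

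Next, recall that a map $g : Z \to Y$ is continuous if and only if $g^{-1}(C)$ is closed in $Z$ for every closed $C \subseteq Y$. I would apply this with $Z = \bigcup_i A_i$, endowed with the subspace topology from $X$. Fix a closed set $C \subseteq Y$. The key set-theoretic identity is
\[ f^{-1}(C) = \bigcup_{i=1}^k f_i^{-1}(C), \]
which holds because $f$ agrees with $f_i$ on $A_i$ and every point of the domain lies in some $A_i$.

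Then I would upgrade closedness piece by piece. Since each $f_i$ is continuous, $f_i^{-1}(C)$ is closed in the subspace $A_i$; that is, $f_i^{-1}(C) = A_i \cap D_i$ for some closed $D_i \subseteq X$. Because $A_i$ is itself closed in $X$, the intersection $A_i \cap D_i$ is closed in $X$, so $f_i^{-1}(C)$ is closed in $X$ and a fortiori in the subspace $\bigcup_i A_i$. A finite union of closed sets is closed, hence $f^{-1}(C)$ is closed. As $C$ was arbitrary, $f$ is continuous.

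The only point requiring care -- and the only place the hypotheses are genuinely needed -- is the interplay between the finiteness of the index set and the closedness of the $A_i$: both are essential, since an arbitrary union of closed sets need not be closed, and a set closed in the subspace $A_i$ promotes to a closed subset of $X$ precisely because $A_i$ is closed in $X$. I do not anticipate any real obstacle here; the statement is entirely standard point-set topology, and it is included only because it will be invoked repeatedly to assemble continuous maps from finitely many closed pieces.
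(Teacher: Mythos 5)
Your proof is correct; the paper itself gives no proof of this lemma, citing Munkres instead, and your argument via closed preimages (using that closedness in a closed subspace promotes to closedness in the ambient space, plus finiteness of the union) is precisely the standard textbook proof being referenced.
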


To an endomorphism $f$ of $X \subset S^\Z$, we associate the function ${\downarrow f}$ which applies $f$ in the unknown coordinates, that is, the coordinates that do not come from the $p$-skeleton $w^\Z$.

\begin{definition}
For a block map $f : X \to X$, define its corresponding \emph{unlifted} map ${\downarrow f} : \chi^{-1}(i) \to S^\Z$ by
\[ {\downarrow f} = \sigma^i \circ \psi_w \circ f \circ \psi_w^{-1} \circ \sigma^{-i} \]
and then ${\downarrow f} : X \to S^\Z$ by joining the (disjoint) domains.
\end{definition}

\begin{lemma}
\label{lem:Unlifting}
With the standing assumptions, for any endomorphism $f : X \to X$, the unlifted map $\downarrow f$ maps $X$ to $X$, and the codomain restriction $\downarrow f : X \to X$ is an endomorphism of $X$. Furthermore, $\downarrow f(\chi^{-1}(i)) = \chi^{-1}(i)$.
\end{lemma}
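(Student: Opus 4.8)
The plan is to exploit the decomposition $X = \bigsqcup_{i=0}^{p-1} \chi^{-1}(i)$ into the $p$ clopen fibers of $\chi$, to analyze $\downarrow f$ one fiber at a time, and then to glue the pieces. First I would check that on each fiber the defining composition is type-correct. For $y \in \chi^{-1}(i)$ we have $\chi(\sigma^{-i} y) = \chi(y) - i = 0$, so $\sigma^{-i}$ carries $\chi^{-1}(i)$ bijectively onto $\chi^{-1}(0)$; then $\psi_w^{-1}$ sends $\chi^{-1}(0)$ homeomorphically onto $X$ by Lemma~\ref{lem:Homeomorphism}; $f$ maps $X$ into $X$; $\psi_w$ returns $X$ into $\chi^{-1}(0)$; and $\sigma^i$ sends $\chi^{-1}(0)$ bijectively onto $\chi^{-1}(i)$. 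Reading off the composition gives $\downarrow f(\chi^{-1}(i)) \subseteq \chi^{-1}(i)$, hence $\downarrow f(X) \subseteq X$. Since $X$ is minimal, the endomorphism $f$ is surjective (its image is a nonempty subsystem), so every map in the composition is onto its stated target and therefore $\downarrow f(\chi^{-1}(i)) = \chi^{-1}(i)$, which is the last assertion.

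For continuity I would invoke the Pasting Lemma. Each restriction $\downarrow f|_{\chi^{-1}(i)} = \sigma^i \circ \psi_w \circ f \circ \psi_w^{-1} \circ \sigma^{-i}$ is a composition of continuous maps, since $\sigma^{\pm i}$ and $f$ are continuous, $\psi_w$ is continuous by Lemma~\ref{lem:PhiContinuity}, and $\psi_w^{-1}$ is continuous because $\psi_w$ is a homeomorphism onto $\chi^{-1}(0)$ by Lemma~\ref{lem:Homeomorphism}. The fibers $\chi^{-1}(i)$ are clopen, in particular closed, and pairwise disjoint, so the compatibility hypothesis of the Pasting Lemma on overlaps is vacuous, and continuity of $\downarrow f$ on all of $X$ follows.

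The remaining and main point is that $\downarrow f$ commutes with $\sigma$; this is where the substitution structure is essential. I would verify $\downarrow f(\sigma y) = \sigma(\downarrow f(y))$ by cases on the fiber of $y$. If $y \in \chi^{-1}(i)$ with $i < p-1$, then $\sigma y \in \chi^{-1}(i+1)$, and substituting $i+1$ into the definition yields $\downarrow f(\sigma y) = \sigma^{i+1} \psi_w f \psi_w^{-1} \sigma^{-i} y = \sigma(\downarrow f(y))$ directly, as the outer $\sigma$-conjugations telescope. The delicate case is the wrap-around $i = p-1$, where $\sigma y \in \chi^{-1}(0)$ and the exponents no longer match naively. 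Writing $u = \psi_w^{-1}(\sigma^{-(p-1)} y) \in X$, so that $y = \sigma^{p-1}\psi_w(u)$ and $\sigma y = \sigma^{p}\psi_w(u)$, I would use the intertwining relation $\sigma^{p} \circ \psi_w = \psi_w \circ \sigma^{q}$ from Lemma~\ref{lem:PhiContinuity} to rewrite $\sigma y = \psi_w(\sigma^q u)$, whence $\psi_w^{-1}(\sigma y) = \sigma^q u$. Then
\[ \downarrow f(\sigma y) = \psi_w f \psi_w^{-1}(\sigma y) = \psi_w f \sigma^q u = \psi_w \sigma^q f u = \sigma^p \psi_w f u = \sigma(\sigma^{p-1}\psi_w f u) = \sigma(\downarrow f(y)), \]
where the third equality uses that $f$ commutes with $\sigma$ and hence with $\sigma^q$, and the fourth again uses $\sigma^p \psi_w = \psi_w \sigma^q$.

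The main obstacle is precisely this wrap-around computation: it is the only place where one must combine the shift-commutation of $f$ with the rate-change intertwining $\sigma^{p}\psi_w = \psi_w\sigma^{q}$ coming from the substitution, and getting the exponent bookkeeping right is the crux. Once commutation is established, $\downarrow f$ is a continuous shift-commuting self-map of $X$, that is, an endomorphism, which completes the proof.
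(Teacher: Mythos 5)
Your proof is correct and follows essentially the same route as the paper: the Pasting Lemma on the closed fibers $\chi^{-1}(i)$ for continuity, the telescoping case $i < p-1$, and the wrap-around case $i = p-1$ handled via the intertwining relation $\sigma^{p} \circ \psi_w = \psi_w \circ \sigma^{q}$ of Lemma~\ref{lem:PhiContinuity} combined with shift-commutation of $f$. If anything, you are slightly more careful than the paper on the final claim: the paper's argument only yields ${\downarrow f}(\chi^{-1}(i)) \subseteq \chi^{-1}(i)$, whereas you additionally invoke surjectivity of $f$ (from minimality of $X$) to upgrade the containment to the stated equality.
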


\begin{proof}
The function $\downarrow f$ is a continuous map from $X$ to $X$ by the pasting lemma, since the $\chi^{-1}(i)$ are closed, and the partial definitions of $\downarrow f$ are continuous maps on $\chi^{-1}(i)$ by following the chain of domains and codomains (and in particular applying Lemma~\ref{lem:Homeomorphism}). To show that this is an endomorphism of $X$, we only need to show that it commutes with the shift. If $y \in \chi^{-1}(i)$ for $i < p-1$, then
\begin{align*}
{\downarrow f}(\sigma(y)) &= \sigma^{i+1} \circ \psi_w \circ f \circ \psi_w^{-1} \circ \sigma^{-i-1} (\sigma(y)) \\
&= \sigma \circ \sigma^i \circ \psi_w \circ f \circ \psi_w^{-1} \circ \sigma^{-i} (y) \\
&= \sigma({\downarrow f}(y))
\end{align*}
since $\sigma(y) \in \chi^{-1}(i+1)$.

If $y \in \chi^{-1}(p-1)$, then $\chi(\sigma(y)) = 0$. Thus by Lemma~\ref{lem:PhiContinuity},
\begin{align*}
{\downarrow f}(\sigma(y)) &= \psi_w \circ f \circ \psi_w^{-1} (\sigma(y)) \\
&= \psi_w \circ f \circ \psi_w^{-1} \circ \sigma^{p} \circ \sigma^{-p+1}(y) \\
&= \psi_w \circ f \circ \sigma^{q} \circ \psi_w^{-1} \circ \sigma^{-p+1}(y) \\
&= \psi_w \circ \sigma^{q} \circ f \circ \psi_w^{-1} \circ \sigma^{-p+1}(y) \\
&= \sigma^p \circ \psi_w \circ f \circ \psi_w^{-1} \circ \sigma^{-p+1}(y) \\
&= \sigma (\sigma^{p-1} \circ \psi_w \circ f \circ \psi_w^{-1} \circ \sigma^{-p+1}(y)) \\
&= \sigma({\downarrow f}(y)).
\end{align*}

For the last claim, $y \in \chi^{-1}(i)$ implies
\[ {\downarrow f}(y) = \sigma^i \circ \psi_w \circ f \circ \psi_w^{-1} \circ \sigma^{-i}(y). \]
Since $\psi_w(z) \in \chi^{-1}(0)$ for all $z \in X$, ${\downarrow f}(y) \in \chi^{-1}(i)$.
\end{proof}

The following is now easy to verify.

\begin{lemma}
With the standing assumptions, for any endomorphisms $g, h : X \to X$ we have
\[ {\downarrow (g \circ h)} = {\downarrow g} \circ {\downarrow h}. \]
\end{lemma}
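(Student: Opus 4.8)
The plan is to prove that $\downarrow$ is a homomorphism of the endomorphism monoid, i.e. ${\downarrow(g \circ h)} = {\downarrow g} \circ {\downarrow h}$, by working fiber-by-fiber on the partition $X = \bigsqcup_{i=0}^{p-1} \chi^{-1}(i)$ and unwinding the definitions. The key observation is that by Lemma~\ref{lem:Unlifting}, each $\downarrow f$ preserves the fibers $\chi^{-1}(i)$, so for a fixed $i$ and a point $y \in \chi^{-1}(i)$, both sides of the claimed identity land in $\chi^{-1}(i)$, and it suffices to check the equation there. On this fiber, $\downarrow f$ is given by the explicit formula ${\downarrow f} = \sigma^i \circ \psi_w \circ f \circ \psi_w^{-1} \circ \sigma^{-i}$.

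First I would take $y \in \chi^{-1}(i)$ and compute ${\downarrow g}({\downarrow h}(y))$ directly. The inner application gives ${\downarrow h}(y) = \sigma^i \circ \psi_w \circ h \circ \psi_w^{-1} \circ \sigma^{-i}(y)$, and since ${\downarrow h}(y) \in \chi^{-1}(i)$ by Lemma~\ref{lem:Unlifting}, applying ${\downarrow g}$ uses the same value of $i$:
\begin{align*}
{\downarrow g}({\downarrow h}(y)) &= \sigma^i \circ \psi_w \circ g \circ \psi_w^{-1} \circ \sigma^{-i} \left( \sigma^i \circ \psi_w \circ h \circ \psi_w^{-1} \circ \sigma^{-i}(y) \right) \\
&= \sigma^i \circ \psi_w \circ g \circ \psi_w^{-1} \circ \psi_w \circ h \circ \psi_w^{-1} \circ \sigma^{-i}(y).
\end{align*}
The crucial cancellation is $\psi_w^{-1} \circ \psi_w = \ID$, which is legitimate because $\psi_w$ is injective (Lemma~\ref{lem:PhiContinuity}), and because $h(\psi_w^{-1}(\sigma^{-i}(y)))$ lies in $X$ with $\psi_w$ defined on all of $X$ mapping into $\chi^{-1}(0)$ (Lemma~\ref{lem:Homeomorphism}), so $\psi_w^{-1}$ applied to $\psi_w(\cdots)$ recovers the argument exactly. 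After cancelling, the middle collapses to $\sigma^i \circ \psi_w \circ g \circ h \circ \psi_w^{-1} \circ \sigma^{-i}(y)$, which is precisely ${\downarrow(g \circ h)}(y)$ by definition.

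The only subtlety — and the step I would be most careful about — is verifying that the fiber indices match up so that the same $\sigma^{\pm i}$ appears throughout, rather than having to track how $g$ or $h$ shift the fiber index. This is exactly what Lemma~\ref{lem:Unlifting} guarantees: ${\downarrow h}$ maps $\chi^{-1}(i)$ back into $\chi^{-1}(i)$, so when I apply ${\downarrow g}$ to the result I reuse the same index $i$ in its defining formula, and the $\sigma^{-i} \circ \sigma^i$ in the middle of the composition cancels cleanly. Since the identity holds on each fiber $\chi^{-1}(i)$ separately and the fibers partition $X$, the pasting lemma (or simply the fact that the fibers are disjoint and cover $X$) gives the equality of the two endomorphisms on all of $X$.
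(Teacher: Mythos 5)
Your proof is correct and is exactly the verification the paper has in mind: the paper omits it entirely ("The following is now easy to verify"), and the intended check is precisely your fiber-by-fiber computation, where Lemma~\ref{lem:Unlifting}'s fiber-preservation ${\downarrow h}(\chi^{-1}(i)) = \chi^{-1}(i)$ guarantees the same index $i$ is used for both unlifted maps, so that $\sigma^{-i}\circ\sigma^{i}$ and $\psi_w^{-1}\circ\psi_w$ cancel. Nothing further is needed.
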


\begin{lemma}
\label{lem:FixedPoint}
Let $0 < a < 1$ and $b \in \R$ be arbitrary. If $R \in \R$ is large enough, then $aR + b < R$.
\end{lemma}

Next, we show that not only can endomorphisms be unlifted, but also lifted once shifted. The important property of this lifting is that it decreases the radius if the radius is large.

\begin{lemma}
\label{lem:InductionStep}
With the standing assumptions, there exists $r$ such that for every endomorphism $f : X \to X$ with radius $R \geq r$, there exists a morphism $h : X \to X$ with radius less than $R$ such that
$f = {\downarrow h} \circ \sigma^{-k}$
for some $0 \leq k < p$.
\end{lemma}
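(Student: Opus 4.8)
The plan is to invert the unlifting construction: given $f$ with large radius $R$, I want to produce $h$ of smaller radius so that $f = {\downarrow h} \circ \sigma^{-k}$. Rearranging the definition of $\downarrow$, the natural candidate is $h = \psi_w^{-1} \circ \sigma^{-i} \circ (\sigma^k f) \circ \sigma^i \circ \psi_w$ restricted to the appropriate skeleton class, glued together across the $p$ classes via the Pasting Lemma exactly as in the proof of Lemma~\ref{lem:Unlifting}. The shift $\sigma^{-k}$ is there to absorb the fact that lifting through $\psi_w$ only makes sense on $\chi^{-1}(0)$, and indeed by Lemma~\ref{lem:Homeomorphism} the map $\psi_w$ is a homeomorphism $X \to \chi^{-1}(0)$, so $\psi_w^{-1}$ makes sense precisely on that fiber; the various $\sigma^{\pm i}$ rotate the other fibers into place. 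So first I would write down this candidate $h$ and check, by the same shift-commutation computation as in Lemma~\ref{lem:Unlifting} (using Lemma~\ref{lem:PhiContinuity}, $\sigma^{p}\psi_w = \psi_w \sigma^{q}$, at the wraparound from $\chi^{-1}(p-1)$ to $\chi^{-1}(0)$), that $h$ is a well-defined endomorphism of $X$ and that ${\downarrow h} \circ \sigma^{-k} = f$ for the right choice of $k$.

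The substance of the lemma is the radius estimate. The key geometric fact is Lemma~\ref{lem:Sparse}: after one application of the substitution, the $\bla$-holes of $w^\Z$ (equivalently the non-skeleton coordinates that $h$ must fill) are spread out, consecutive holes being at distance at least $2$, and more importantly the inverse operation $\psi_w^{-1}$ \emph{contracts} distances. Concretely, $\psi_w$ takes a coordinate $j$ of the argument to roughly coordinate $(p/q)\,j$ of the image, since $w$ has length $p$ and $q$ holes; so $\psi_w^{-1}$ scales coordinates by a factor of about $q/p < 1$. Therefore, when $h$ reads input symbols within radius $R$ of a coordinate through the formula above, after passing through $\psi_w^{-1}$ the relevant input window in $X$ shrinks by the factor $q/p$, up to an additive constant coming from the bounded overhead of the substitution (the $\sigma^{\pm i}$ shifts, with $|i| < p$, and the finite detection radius from Lemma~\ref{lem:ContinuityThing} needed to compute $\chi$ and locate the skeleton). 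This is exactly the shape $aR + b$ with $a = q/p < 1$, so Lemma~\ref{lem:FixedPoint} gives a threshold $r$ beyond which $aR + b < R$, yielding $\mathrm{radius}(h) < R$.

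Thus the order of steps is: (1) define the candidate $h$ fiberwise and glue via the Pasting Lemma; (2) verify $h$ is a shift-commuting continuous self-map of $X$ and that $f = {\downarrow h}\circ\sigma^{-k}$, essentially re-running the computation of Lemma~\ref{lem:Unlifting} in reverse; (3) bound the radius of $h$ by tracking how a window of radius $R$ transforms under $\sigma^i$, $\psi_w^{-1}$, the application of $f$, and back, using Lemma~\ref{lem:Sparse} to get the contraction factor $q/p$ and a uniform additive constant; (4) invoke Lemma~\ref{lem:FixedPoint} to extract the threshold $r$.

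The main obstacle I expect is step (3): making the contraction estimate precise and \emph{uniform}. It is easy to say heuristically that $\psi_w^{-1}$ scales by $q/p$, but to get a clean bound on the local rule of $h$ I need to control exactly how many input cells of $X$ are consulted to determine one output cell of $h$, and this requires knowing how far apart the skeleton and non-skeleton coordinates sit and how large a window is needed to compute $\psi_w^{-1}$ reliably and to evaluate $\chi$. The additive constant $b$ must be bounded independently of $R$ (it depends only on $w$, $p$, $q$, and the skeleton-detection radius from Lemma~\ref{lem:ContinuityThing}), and the multiplicative constant must be strictly $q/p < 1$ rather than merely $\le 1$; pinning these down carefully, rather than waving at them, is where the real work lies.
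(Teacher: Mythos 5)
Your overall skeleton (conjugate by $\psi_w$, track the radius contraction by the factor $q/p$, finish with Lemma~\ref{lem:FixedPoint}) matches the paper, and your steps (3)--(4) are essentially the paper's radius estimate: the paper bounds the radius of the lift by $\lceil q(R+3p)/p\rceil$, i.e.\ $(q/p)R+b$, and invokes Lemma~\ref{lem:FixedPoint}. (A minor misattribution: the contraction comes directly from the definition of $\psi_w$ -- a window of length $\ell$ in $x$ determines a window of length roughly $(p/q)\ell$ in $\psi_w(x)$ -- not from Lemma~\ref{lem:Sparse}, which the paper needs only later, in Lemma~\ref{lem:Recursion}.)

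However, your step (2) has a genuine gap, and it sits exactly where the central idea of the paper's proof lies. The candidate $h=\psi_w^{-1}\circ f_1\circ\psi_w$ (with $f_1=f\circ\sigma^k$ chosen via Lemma~\ref{lem:InducedMapInFactor} so that $\chi\circ f_1=\chi$) is a well-defined continuous self-map of $X$, but the ``same shift-commutation computation as in Lemma~\ref{lem:Unlifting}'' does not show it commutes with $\sigma$: the identity $\sigma^p\circ\psi_w=\psi_w\circ\sigma^q$ only yields $h\circ\sigma^q=\sigma^q\circ h$, and commutation with $\sigma$ itself is not formal, since a map commuting with $\sigma^q$ need not commute with $\sigma$. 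This is precisely where the paper brings in its independence machinery: it assembles the $q$ conjugates $g_1(\cdot,i)=\sigma^i\circ\psi_w^{-1}\circ f_1\circ\psi_w\circ\sigma^{-i}$, $i\in\Z_q$, into a single morphism $g_1:X\times\Z_q\to X$ (this is where the routine computation actually goes), and then uses $X\perp\Z_q$ (Lemma~\ref{lem:mnPerp}) together with Theorem~\ref{thm:AllAboutPerp} to conclude that $g_1$ is right-independent, i.e.\ all the conjugates coincide; only then does $h$ commute with $\sigma$, after which ${\downarrow h}=f_1$ is immediate. Relatedly, the Pasting Lemma cannot play the role you assign to it: unlike in Lemma~\ref{lem:Unlifting}, where the pieces of ${\downarrow f}$ live on the disjoint closed fibers $\chi^{-1}(i)$, your per-phase candidates $\psi_w^{-1}\circ\sigma^{-i}\circ f_1\circ\sigma^i\circ\psi_w$ are each defined on all of $X$, so there is nothing to paste -- what must be proved is that they all agree, which is again exactly the independence statement your proposal skips over.
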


\begin{proof}
Suppose $R$ is the radius of $f$, and $R \geq p$. Let $k$ with $0 \leq k < p$ satisfy $\chi \circ (f \circ \sigma^k) = \chi$ (see Lemma~\ref{lem:InducedMapInFactor}), and let $f_1 = f \circ \sigma^k$. Then $f_1$ has radius less than $R + p$. Let $g_1 : X \times \Z_q \to X$ be defined by
\[ g_1(y, i) = (\sigma^i \circ \psi_w^{-1} \circ f_1 \circ \psi_w \circ \sigma^{-i})(y). \]

Since all the maps in the composition are continuous, $g_1$ is continuous. To check that $g_1$ is a morphism, we now only have to check that it is shift-commuting. The calculation is very similar to the one in Lemma~\ref{lem:Unlifting}. If $i < q-1$, then this is true basically by definition:
\begin{align*}
g_1(\sigma(y), i+1) &= (\sigma^{i+1} \circ \psi_w^{-1} \circ f_1 \circ \psi_w \circ \sigma^{-i-1})(\sigma(y)) \\
&= \sigma((\sigma^i \circ \psi_w^{-1} \circ f_1 \circ \psi_w \circ \sigma^{-i})(y)) \\
&= \sigma(g_1(y, i)).
\end{align*}

If $i = q-1$, then
\begin{align*}
g_1(\sigma(y), 0) &= (\psi_w^{-1} \circ f_1 \circ \psi_w)(\sigma(y)) \\
&= (\sigma^q \circ \psi_w^{-1} \circ f_1 \circ \psi_w \circ \sigma^{-q})(\sigma(y)) \\
&= \sigma((\sigma^{q-1} \circ \psi_w^{-1} \circ f_1 \circ \psi_w \circ \sigma^{-q+1})(y)) \\
&= \sigma(g_1(y, q-1)).
\end{align*}
The second equality follows from Lemma~\ref{lem:PhiContinuity}.

Because $X \perp \Z_q$, it follows from Theorem~\ref{thm:AllAboutPerp} that $X$ is independent of $\Z_q$. Thus, there exists a map $h_1 : X \to X$ such that $g_1(y, i) = h_1(y)$ for all $y \in X$ and $i \in \Z_q$. But then, by the definition of the unlifting operation, we have $f_1 = {\downarrow h_1}$. It is easy to see that $h_1$ can be taken to have the same radius as $g_1$, since $h_1(x) = g_1(x, 0)$ for all $x \in X$.

Now, let us compute an upper bound for the radius of $h_1$, that is, we need to determine $\ell$ such that $x_{[-\ell,\ell]}$ determines $h_1(x)_0$ for $x \in X$. Let $j$ be minimal such that $w_j = \bla$. Then since
\[ h_1(x) = g_1(x, 0) = \psi_w^{-1} \circ f_1 \circ \psi_w, \]
by the definition of $\phi_w$ and $\chi \circ f_1 = \chi$ we have $h_1(x)_0 = f_1(\psi_w(x))_j$
If $x_{[-\ell,\ell]}$ is known, then the word $\psi_w(x)_{[-\ell',\ell']}$ is uniquely determined for at least any $\ell' \leq \frac{\ell p}{q}-p$, and $f_1(\phi(y, x))_j$ is determined if $\ell' \geq R + p + j$ , where $R + p$ is the upper bound for the radius of $f_1$. Thus, we can take
\[ \ell = \left\lceil \frac{q(R + 3p)}{p} \right\rceil \]

In particular, for some constant $b$, $h_1$ has radius at most $\frac{q}{p}R + b$. By Lemma~\ref{lem:FixedPoint}, this is smaller than $R$ if $R$ is large enough.
\end{proof}

We write $\sigma_j = {\downarrow^j \sigma}$, so that $\sigma_0 = \sigma$, and for $j \geq 1$, $\sigma_j(y)$ equals $y$ in the coordinates $i$ where $\ske(p^j, y)_i = y_i$, and the subsequence of coordinates $i$ with $\ske(p^j, y)_i = \bla$ is shifted one step to the left.

\begin{lemma}
\label{lem:Abelian}
With the standing assumptions, for all $j, k \in \N$,
\[ \sigma_j \circ \sigma_k = \sigma_k \circ \sigma_j. \]
\end{lemma}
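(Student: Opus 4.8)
The plan is to prove that the maps $\sigma_j$ commute with one another by understanding precisely what each one does to a point $y \in X$ and then checking the two composite actions agree. By the description given just before the statement, $\sigma_j$ fixes every coordinate lying in the $p^j$-skeleton (the coordinates $i$ with $\ske(p^j,y)_i = y_i$) and shifts the complementary subsequence of coordinates one step to the left. The key structural fact, established in Lemma~\ref{lem:Important} and its proof (and recorded in the $\ske(p^j,x)=x^j$ computation), is that these skeletons are \emph{nested}: the coordinates appearing in $\ske(p^{j},y)$ are exactly those of $\ske(p^{j-1},y)$ together with a new sparse layer, and the complementary ``hole'' set for $p^j$ is contained in the hole set for $p^{j-1}$. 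I would make this nesting and the combinatorial action of each $\sigma_j$ on the hole-subsequences the backbone of the argument.

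First I would reduce to a single generic point: since $X$ is minimal and each $\sigma_j$ is a block map (being a composition of the block map $\sigma$ with unliftings, which preserve the block-map property by Lemma~\ref{lem:Unlifting}), it suffices to verify $\sigma_j \circ \sigma_k = \sigma_k \circ \sigma_j$ on the dense set of Toeplitz points, where the skeleton structure is cleanest; continuity then extends the identity to all of $X$. Without loss of generality assume $j \le k$. Next I would set up explicit index bookkeeping: for a Toeplitz point $y$, partition $\Z$ into the $p^j$-skeleton coordinates, the coordinates that are holes at level $j$ but filled at level $k$, and the coordinates that remain holes at level $k$. The map $\sigma_k$ acts as the identity on the first two classes and left-shifts the last; $\sigma_j$ acts as the identity on the first class and left-shifts the union of the second and third classes (re-indexed as one subsequence).

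The heart of the calculation is to track how a coordinate is relabeled under each order of composition and check the two bijections of $\Z$ underlying the two composites coincide, and that the symbol transported to a given output coordinate is the same either way. Because $\sigma_j$ and $\sigma_k$ each act by ``collapse one chosen subsequence of holes by one step and leave the rest fixed,'' and because the level-$k$ holes form a sub-subsequence of the level-$j$ holes, the two shift operations act on compatible index sets, so their effects commute in the same way that shifting two nested sparse subsequences commutes. I would phrase this via the explicit substitution maps: using $\sigma_j = {\downarrow}^j\sigma$ and the relation $\sigma^{p}\circ\psi_w = \psi_w\circ\sigma^{q}$ from Lemma~\ref{lem:PhiContinuity} together with the fact that unlifting is a monoid homomorphism (the immediately preceding lemma, ${\downarrow}(g\circ h) = {\downarrow}g\circ{\downarrow}h$), one can attempt a purely algebraic route: it suffices to show $\sigma_{\min(j,k)}$ commutes with $\sigma_{\max(j,k)}$, and by applying ${\downarrow}^{\,\min(j,k)}$ and its homomorphism property, reduce to commuting $\sigma$ with $\sigma_{|j-k|}$.

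The main obstacle I anticipate is this last reduction and the base case it produces: showing $\sigma$ commutes with $\sigma_n$ for $n \ge 1$. Here $\sigma_n$ and $\sigma$ act on genuinely different index subsequences (the full shift moves every coordinate, while $\sigma_n$ moves only the hole-subsequence), so the ``obvious'' algebraic cancellation is not immediate and one must genuinely confront the interaction between the global shift and the level-$n$ skeleton. I would handle this by combining the homomorphism property ${\downarrow}(g\circ h)={\downarrow}g\circ{\downarrow}h$ with the conjugation identity defining ${\downarrow}$ and the intertwining $\sigma^{p}\circ\psi_w = \psi_w\circ\sigma^{q}$; concretely, writing $\sigma_n = {\downarrow}^n\sigma$ and pushing one copy of $\sigma$ through the unlifting layers using Lemma~\ref{lem:PhiContinuity} should let the shifts pass each other at the cost of controlled powers of $\sigma$ that ultimately cancel. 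I expect verifying that these bookkeeping powers cancel exactly — rather than leaving a residual shift — to be the delicate point, and it is where the precise arithmetic of $p$ versus $q$ in Lemma~\ref{lem:PhiContinuity} must be used carefully.
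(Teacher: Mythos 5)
Your reduction is exactly the one the paper uses: by the homomorphism property ${\downarrow}(g\circ h) = {\downarrow}g \circ {\downarrow}h$, for $j < k$ one writes $\sigma_j \circ \sigma_k = {\downarrow^j}(\sigma \circ {\downarrow^{k-j}}\sigma)$, so everything comes down to commuting $\sigma$ with $\sigma_{k-j}$. But at precisely this point your proposal stalls: you flag the identity $\sigma \circ \sigma_n = \sigma_n \circ \sigma$ as the main obstacle, propose to push $\sigma$ through the unlifting layers via the intertwining $\sigma^{p} \circ \psi_w = \psi_w \circ \sigma^{q}$, and worry about whether residual powers of the shift cancel. None of that is needed, and leaving it as an anticipated, ``delicate'' computation is a genuine gap: you never actually establish the base case, and the route you sketch for it is far more complicated than the situation demands.

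The missing observation is a one-liner. Lemma~\ref{lem:Unlifting} states that the unlifted map of an endomorphism of $X$ is again an endomorphism of $X$; iterating, $\sigma_n = {\downarrow^n}\sigma$ is an endomorphism of the dynamical system $(X,\sigma)$. By the very definition of a morphism of dynamical systems, every endomorphism commutes with $\sigma$. Hence $\sigma \circ {\downarrow^{k-j}}\sigma = {\downarrow^{k-j}}\sigma \circ \sigma$ holds for free, and applying ${\downarrow^j}$ to both sides (again via the homomorphism property) gives $\sigma_j \circ \sigma_k = \sigma_k \circ \sigma_j$. That is the paper's entire proof. Your first, combinatorial route (tracking nested hole-subsequences on Toeplitz points and extending by minimality and continuity) could likely be made rigorous, but it is much heavier than necessary and is likewise left unexecuted; the index bookkeeping you describe is exactly what the algebraic argument lets you avoid.
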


\begin{proof}
Suppose $j < k$. Then 
\begin{align*}
\sigma_j \circ \sigma_k &= {\downarrow^j} (\sigma \circ \downarrow^{k - j} \sigma) \\
&= {\downarrow^j} (\downarrow^{k - j} \sigma \circ \sigma) \\
&= \sigma_k \circ \sigma_j.
\end{align*}
\end{proof}

\begin{lemma}
\label{lem:Recursion}
With the standing assumptions, for any endomorphism $h : X \to X$, if $n > 0$ and
\[ h = \sigma_0^{\ell_0} \circ \sigma_1^{\ell_1} \circ \cdots \circ \sigma_{n-1}^{\ell_{n-1}} \circ {\downarrow^n h}, \]
then $h = \sigma^i$ for some $i \in \Z$.
\end{lemma}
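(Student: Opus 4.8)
The plan is to reduce the whole statement to a single divisibility fact about the \emph{translation numbers} that an endomorphism induces on the finite factors. By Lemma~\ref{lem:InducedMapInFactor} every endomorphism $f$ acts on each factor $\Z_{p^m}$ as a translation $a\mapsto a+c_m(f)$, and these numbers are compatible as $m$ grows, so they define an element $t(f)$ of the ring of $p$-adic integers (the inverse limit of the $\Z_{p^m}$). Then $t$ is a homomorphism into the additive group of $p$-adic integers and $t(\sigma^i)=i$. The key point is that $t$ is \emph{faithful}: by the remark after Lemma~\ref{lem:InducedMapInFactor} (Proposition~1 of \cite{Ol13}) an endomorphism is determined by the maps it induces on the finite factors. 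Hence it suffices to prove that $t(h)$ is an ordinary integer, for then $h$ and $\sigma^{t(h)}$ induce the same translations and must coincide.

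First I would compute $t$ on the maps in sight. Since $\sigma$ induces $+1$ everywhere, $t(\sigma)=1$. Using the intertwining $\sigma^{p}\circ\psi_w=\psi_w\circ\sigma^{q}$ of Lemma~\ref{lem:PhiContinuity} together with the definitions of $\downarrow$ and $\chi$, a short computation on the factors $\Z_{p^{m}}$ shows $t(\downarrow f)=(p/q)\,t(f)$, where $p/q$ makes sense as a $p$-adic integer because $\gcd(p,q)=1$; in particular $t(\sigma_j)=(p/q)^{j}$. Applying $t$ to the hypothesis gives
\[ t(h)\bigl(1-(p/q)^{n}\bigr)=\sum_{j=0}^{n-1}\ell_j\,(p/q)^{j}, \]
and since $\lvert 1-(p/q)^{n}\rvert_p=1$ the left factor is a unit, so
\[ t(h)=\frac{\sum_{j=0}^{n-1}\ell_j\,p^{j}q^{\,n-j}}{q^{n}-p^{n}}\in\Q. \]
The denominator divides $q^{n}-p^{n}$, which is coprime both to $p$ (it is $\equiv q^{n}\bmod p$) and to $q$ (it is $\equiv -p^{n}\bmod r$ for every prime $r\mid q$). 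Consequently, once I also know that $t(h)\in\Z[1/q]$, i.e. that its denominator is a power of $q$, that denominator will be simultaneously a power of $q$ and coprime to $q$, hence $1$, giving $t(h)\in\Z$ and finishing by faithfulness.

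So the real content is to show $t(h)\in\Z[1/q]$, and here I would use that $\downarrow$ \emph{expands} radius. Before doing so I would normalise the exponents into $[-p',p']$: using the lifting relation $\sigma_i^{\,p}=\sigma_{i+1}^{\,q}$ (which holds by faithfulness, since both sides have translation number $p\,(p/q)^i=q\,(p/q)^{i+1}$) one runs the reduction in the proof of Lemma~\ref{lem:GroupStuff} from the bottom up, and the overflow past index $n-1$ is absorbed into the inner factor through $\sigma_n^{\,c}\circ\downarrow^{n}h=\downarrow^{n}(\sigma^{c}\circ h)$; the carried amount shrinks by the factor $q/p<1$ each time it moves up an index, so this converges and only replaces $h$ by a shift of $h$. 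After this, $g=\sigma_0^{\ell_0}\cdots\sigma_{n-1}^{\ell_{n-1}}$ has radius bounded purely in terms of $n$. Inverting the estimate of Lemma~\ref{lem:InductionStep} gives $\mathrm{rad}(\downarrow f)\ge (p/q)\,\mathrm{rad}(f)-O(1)$ with $p/q>1$, so the rewritten equation $\downarrow^{n}h=g^{-1}\circ h$ forces $\bigl((p/q)^{n}-1\bigr)\mathrm{rad}(h)=O_n(1)$ and hence bounds $\mathrm{rad}(h)$.

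The main obstacle is the final identification. Having bounded the radius of $h$, one must conclude that $h$ lies in the group generated by the maps $\sigma_j$ (whose translation numbers all lie in $\Z[1/q]$), so that $t(h)\in\Z[1/q]$. This is the delicate point, since the descent of Lemma~\ref{lem:InductionStep} only lowers the radius below its threshold rather than all the way down to a symbol map, and pinning down the bounded-radius maps is entangled with the global computation of the automorphism group. I expect that controlling the additive constants in the expansion estimate, verifying that the normalisation of the exponents terminates, and carrying out this bounded-radius analysis constitute the bulk of the work; the number-theoretic collapse of the denominator in the second paragraph is then exactly what upgrades the resulting membership $t(h)\in\Z[1/q]$ to $t(h)\in\Z$, and faithfulness yields $h=\sigma^{t(h)}$.
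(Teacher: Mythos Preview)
Your $p$-adic translation-number setup is correct and elegant: the homomorphism $t$ into the $p$-adic integers is faithful by the almost $1$-$1$ extension argument you cite, the identity $t(\downarrow f)=(p/q)\,t(f)$ follows from $\sigma^p\circ\psi_w=\psi_w\circ\sigma^q$, and the resulting formula
\[
t(h)=\frac{\sum_{j=0}^{n-1}\ell_j\,p^j q^{\,n-j}}{q^n-p^n}
\]
is right, with denominator coprime to both $p$ and $q$.

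The genuine gap is the step you yourself flag: proving $t(h)\in\Z[1/q]$. Your plan is to bound the radius of $h$ and then argue that bounded-radius endomorphisms lie in $\langle\sigma_j\rangle$. But this is circular: Lemma~\ref{lem:Recursion} is exactly the device the paper uses, in the proof of Theorem~\ref{thm:Main}, to dispose of the endomorphisms whose radius has stabilised below the threshold $r$ of Lemma~\ref{lem:InductionStep}. If one could already classify bounded-radius endomorphisms independently, this lemma would be superfluous. Moreover, since $q^n-p^n$ is coprime to $q$, the condition $t(h)\in\Z[1/q]$ is \emph{equivalent} to $t(h)\in\Z$, so the detour through $\Z[1/q]$ does not actually weaken the target.

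The paper's proof avoids this circularity by a direct combinatorial argument that does not use faithfulness or the equicontinuous factor at all. One first iterates the hypothesis to make $n$ as large as desired. Choosing $n>\log_2(2R+1)$, Lemma~\ref{lem:Sparse} guarantees that consecutive $\bla$-coordinates of $\ske(p^n,\cdot)$ are more than $2R$ apart. Embedding an arbitrary $z\in X$ into the $\bla$-slots of a fixed skeleton via $\xi(z)=\phi(\ske(p^n,y),z)$, the relation $h=g\circ\downarrow^n h$ gives $h(z)_0=h(\xi(z))_k$ for a fixed $k$. But the window $[k-R,k+R]$ of $\xi(z)$ contains at most one $\bla$-slot, so $h(z)_0$ depends on at most one coordinate $z_i$; the standing assumption that the only symbol map is the identity then forces $h=\sigma^i$. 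This is what replaces your unresolved bounded-radius analysis.
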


\begin{proof}
First, note that we can make $n$ as large as we like by repeatedly substituting this expression for $h$ on the right-hand side and using ${\downarrow (g \circ h)} = {\downarrow g} \circ {\downarrow h}$. Let $g = \sigma_0^{\ell_0} \circ \sigma_1^{\ell_1} \circ \cdots \circ \sigma_{n-1}^{\ell_{n-1}}$.


If $R$ is the radius of $h$, we take $n$ larger than $\lceil \log (2R + 1) \rceil$, so that by Lemma~\ref{lem:Sparse}, if $x^n(w)_{[i, i']} \in \bla S^{i'-i-1} \bla$ we have $i'-i \geq 2^{2R+1}$. Choose a point $y$ with $\ske(p^n, y)_0 = \bla$ and define a function $\xi : X \to X$ where
\[ \xi(x) = \phi(\ske(p^n, y), x) \]
for all $x \in X$, that is, $\xi(x)$ is the point where the $\bla$-coordinates of $\ske(p^n, y)$ are replaced by symbols of $x$ in order, so that in particular $\xi(y)_0 = x_0$.

Now, we note that there exists $k$ such that $g(\xi(z))_k = z_0$ for all $z \in X$. Namely, whatever $z \in X$ is, it will be shifted in the exact same way, as a subsequence of $\xi(z)$, by all the maps $\sigma_i^{\ell_i}$ for $i < n$, since by definition, $\sigma^i$ simply shifts the contents of the $\bla$-coordinates of $\ske(p^n, y)$ to the left, jumping over other coordinates.

The equation $({\downarrow^n h})(\xi(z)) = \xi(h(z))$ holds basically by the definition of the $\downarrow$-operation, as ${\downarrow^n h}$ applies $h$ to the subsequence of $y$ found in the $\bla$-coordinates of $\ske(p^n, y)$. Thus, since we assumed $h = g \circ {\downarrow^n h}$, the equality
\[ h(\xi(z))_k = (g \circ {\downarrow^n h}) (\xi(z))_k = g(\xi(h(z)))_k = h(z)_0 \]
holds for all $z \in X$.

Now, we have $|\ske(p^n, y)_{[k-R,k+R]}|_{\bla} \leq 1$, by the assumption on $n$. If there is no $t \in {[k-R,k+R]}$ with $\ske(p^n, y)_t = \bla$, then $h$ is a constant map:
\[ \forall z \in X: h(z)_0 = h(\xi(z))_k = \loc{h}(y_{[k-R,k+R]}). \]
Due to the assumption that $X$ supports no symbol maps other than the identity, this is impossible. If there is such $t$, then we note that, by the definition of $\xi$, there exists $i$ such that for all $z \in X$, $\xi(z)_t = z_i$. Then,
\[ \forall z \in X: h(z)_0 = h(\xi(z))_k = \loc{h}(y_{[k-r,t-1]},z_i,y_{[t+1,k+r]}), \]
so $h = \sigma^i \circ \pi$ for some symbol map $\pi$. Again, by the assumption that $X_w$ has no symbol maps other than the identity, we have that $h$ is a shift map.
\end{proof}

\begin{theorem}
\label{thm:Main}
For every tuple $(p,p',q)$ satisfying \eqref{eq:Nice} with $p$ prime, there exists a two-way Toeplitz subshift whose endomorphism monoid is isomorphic to the group $A(p, q)$.
\end{theorem}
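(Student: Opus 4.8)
The plan is to realize $\mathrm{End}(X)$ as the group $G$ generated by the maps $\sigma_j := {\downarrow^j \sigma}$ of Lemma~\ref{lem:Abelian}, and to match $G$ with $A(p,q)$. First I would check that each $\sigma_j$ is an automorphism (unlifting an automorphism gives a bijective endomorphism, by Lemma~\ref{lem:Unlifting} and Lemma~\ref{lem:Homeomorphism}), that the $\sigma_j$ commute (Lemma~\ref{lem:Abelian}), and that they satisfy the single lifting relation $\sigma_j^p = \sigma_{j+1}^q$. The last point is the key computation: since $\downarrow$ is a monoid homomorphism and $\psi_w \circ \sigma^q = \sigma^p \circ \psi_w$ by Lemma~\ref{lem:PhiContinuity}, one gets ${\downarrow(\sigma^q)} = \sigma^p$ on each fiber $\chi^{-1}(i)$ hence on all of $X$, and applying $\downarrow^{j}$ yields $\sigma_{j+1}^q = {\downarrow^{j+1}(\sigma^q)} = {\downarrow^{j}(\sigma^p)} = \sigma_j^p$. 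Thus $G := \langle \sigma_j \mid j \in \N \rangle \le \mathrm{Aut}(X)$ is an abelian $(p,q)$-lifting group.

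To see $G \cong A(p,q)$ rather than a proper quotient, I would pass to the finite factors. By Lemma~\ref{lem:PeriodicStructureOfx} these are exactly the $\Z_{p^\ell}$, and by Lemma~\ref{lem:InducedMapInFactor} each endomorphism acts on $\Z_{p^\ell}$ by a translation; since these translations determine the endomorphism completely, they assemble into an injective additive map $c : \mathrm{End}(X) \to \varprojlim \Z_{p^\ell} = \Z_p$. As $c(\sigma) = 1$, the relation above forces $q\,c(\sigma_{j+1}) = p\,c(\sigma_j)$ and hence $c(\sigma_j) = (p/q)^j$, so $c(G) = A(p,q)$ and $c|_G$ is the desired isomorphism; equivalently, the normal form of Lemma~\ref{lem:GroupStuff} is unique in $G$ because it is unique in $A(p,q) \subseteq \Z_p$.

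The substance of the theorem is the reverse inclusion $\mathrm{End}(X) \subseteq G$. Given an endomorphism $f$, I would use Lemma~\ref{lem:InductionStep} repeatedly to write, for every $n$, ${\downarrow^n f} = g_n \circ {\downarrow^{m_n} h_n}$ with $g_n \in G$ and $h_n$ of radius below the threshold $r$; this is possible because each application of Lemma~\ref{lem:InductionStep} strips off one $\downarrow$ and a shift while shrinking the radius (Lemma~\ref{lem:FixedPoint}), and $\downarrow$ maps $G$ into $G$ via $\sigma_j \mapsto \sigma_{j+1}$. There are only finitely many endomorphisms of radius $< r$, so the cores $h_n$ repeat; taking $n_1 < n_2$ with $h_{n_1} = h_{n_2}$ and comparing the two expressions for ${\downarrow^{n_2} f}$ (one from the reduction at $n_2$, one from applying $\downarrow^{n_2 - n_1}$ to the reduction at $n_1$) yields a self-similar identity $H = W \circ {\downarrow^{m} H}$ with $W \in G$ and $m > 0$, to which Lemma~\ref{lem:Recursion} applies, forcing $H$ to be a shift. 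Tracing $c$ back through this chain gives $(p/q)^{s} c(f) \in A(p,q)$ for some $s$, and the elementary $p$-adic observation that $c(f) \in \Z_p$ together with $(p/q)^{s} c(f) \in A(p,q)$ forces $c(f) \in A(p,q)$ (the potential negative powers of $p/q$ must carry $p$-divisible coefficients, hence are integers, and $\Z \subseteq A(p,q)$). By injectivity of $c$ and $c(G) = A(p,q)$ this yields $f \in G$, completing $\mathrm{End}(X) = G \cong A(p,q)$.

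The mechanical parts — that the $\sigma_j$ are commuting automorphisms, that $c$ is a well-defined injective homomorphism, and the group bookkeeping through Lemma~\ref{lem:GroupStuff} — I expect to go through without trouble. The main obstacle is the surjectivity argument: Lemma~\ref{lem:Recursion} only fires on a map satisfying the exact recursion $H = W \circ {\downarrow^m H}$, so the real work is to manufacture this equation from the reduction process. Concretely, I must control the reduction depths $m_n$ so that the pigeonhole coincidence of cores produces a genuine $m > 0$ (and a word $W$ in admissible generators $\sigma_0, \ldots, \sigma_{m-1}$), and then transport the conclusion back through the non-invertible operation $\downarrow$; it is precisely here that the injectivity of $c$ into $\Z_p$ and the uniqueness of normal forms are indispensable, since knowing ${\downarrow^s h} \in G$ does not by itself place $h$ in $G$.
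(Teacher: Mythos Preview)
Your $p$-adic framework for identifying $G \cong A(p,q)$ --- embedding $\mathrm{End}(X)$ injectively into $\Z_p$ via the induced translations on the finite factors and reading off $c(\sigma_j)=(p/q)^j$ --- is correct and is a clean alternative to what the paper does (the paper instead checks normal-form uniqueness directly from the level structure of $X$ and then invokes Lemma~\ref{lem:GroupStuff}). Both routes work for this half.

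The gap is exactly where you locate it, and your outline does not close it. Reducing $\downarrow^n f$ separately for each $n$ and pigeonholing on the cores $h_n$ produces the depth mismatch $\downarrow^{m_{n_2}} H = W \circ \downarrow^{m_{n_1}+n_2-n_1} H$, and nothing you have written forces $m_{n_1}+n_2-n_1 - m_{n_2} > 0$ or lets you cancel the outer $\downarrow$'s to reach the exact hypothesis of Lemma~\ref{lem:Recursion}. The paper sidesteps this entirely by organizing the reduction differently: it iterates Lemma~\ref{lem:InductionStep} along a \emph{single} chain $f=:h_0,\,h_1,\,h_2,\ldots$ with $h_i = \sigma^{k_{i+1}} \circ {\downarrow h_{i+1}}$ at each step, so that
\[
f \;=\; \sigma_0^{k_1}\circ\sigma_1^{k_2}\circ\cdots\circ\sigma_{n-1}^{k_n}\circ{\downarrow^n h_n}
\]
for every $n$. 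The radii of the $h_i$ strictly decrease until they drop below $r$, and then pigeonhole on the \emph{tail} of this one sequence gives $h_n=h_{n+m}$ with $m>0$; but now the recursion
\[
h_n \;=\; \sigma_0^{k_{n+1}}\circ\cdots\circ\sigma_{m-1}^{k_{n+m}}\circ{\downarrow^m h_n}
\]
falls out immediately, already in the precise shape Lemma~\ref{lem:Recursion} demands. That lemma gives $h_n=\sigma^i$, hence ${\downarrow^n h_n}=\sigma_n^{\,i}\in G$ and $f\in G$ directly. There is no need to transport anything back through the non-invertible $\downarrow$, and the $p$-adic back-substitution argument you sketch (which is correct: $(p/q)^s c(f)\in A(p,q)$ together with $c(f)\in\Z_p$ forces the negative-index part of the expansion to be an integer) is simply unnecessary once the chain is set up this way.
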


\begin{proof}
With the standing assumptions and the notation above (in particular, choosing a suitable $w$ corresponding to $p$ and $q$ as discussed in the beginning of this section), we show that the endomorphism monoid of $X$ is
$\langle \sigma_i \;|\; i \in \N \rangle$,
and it is isomorphic to the group $A(p, q)$ by the isomorphism $\sigma_i \mapsto \left( \frac{p}{q} \right)^i$. By Lemma~\ref{lem:PhiContinuity} and Lemma~\ref{lem:Abelian}, the $\sigma_i$ indeed generate an $(p,q)$-lifting group.

Given any $f : X \to X$, we start iterating Lemma~\ref{lem:InductionStep} on $f$ to obtain
\[ f = \sigma^{k_1} \circ {\downarrow h_1} = \sigma^{k_1} \circ {\downarrow (\sigma^{k_2} \circ {\downarrow h_2})} = \sigma^{k_1} \circ {\downarrow (\sigma^{k_2} \circ {\downarrow (\sigma^{k_3} \circ {\downarrow h_3})})} = \cdots, \]
which can be rewritten, using the equality ${\downarrow (g \circ h)} = {\downarrow g} \circ {\downarrow h}$, as
\[ f = \sigma_0^{k_1} \circ {\downarrow h_1} = \sigma_0^{k_1} \circ \sigma_1^{k_2} \circ {\downarrow^2 h_2} = \sigma_0^{k_1} \circ \sigma_1^{k_2} \circ \sigma_2^{k_3} \circ {\downarrow^3 h_3} = \cdots. \]
By Lemma~\ref{lem:InductionStep}, the radii of the $h_i$ eventually decrease below some constant $r$, and then for some $n$, we have $h_n = h_{n+m}$ for some $m > 0$.

Then we have 
\[ h_n = \sigma_0^{k_{n+1}} \circ \sigma_1^{k_{n+2}} \cdots \sigma_{m-1}^{k_{n+m}} \circ {\downarrow^m h_n}. \]
It follows from Lemma~\ref{lem:Recursion} that $h_n$ is a shift map, and then $f \in \langle \sigma_i \;|\; i \in \N \rangle$.

Now, we have seen that there are no endomorphisms other than the maps in $\langle \sigma_i \;|\; i \in \N \rangle$. To see that this group is isomorphic to $A(p, q)$, by Lemma~\ref{lem:GroupStuff} it is enough to show
\[ \sigma_j^{k_j} \circ \sigma_{j+1}^{k_{j+1}} \circ \cdots \circ \sigma_{j'}^{k_{j'}} = \ID_X \]
where $k_j \neq 0$, $k_{j'} \neq 0$ and $k_i \in [-p', p']$ for all $i$ is impossible. But this is again clear from the periodic structure of $X$: $\sigma_j^{k_j}$ shifts the $j$th level of its input, and does not change other coordinates.
\end{proof}

\begin{corollary}
There exists a minimal Toeplitz subshift whose automorphism group is not finitely generated.
\end{corollary}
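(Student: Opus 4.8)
The final statement is the corollary asserting the existence of a minimal Toeplitz subshift whose automorphism group is not finitely generated. The plan is to assemble it directly from the machinery already built, since essentially all the work has been done in Theorem~\ref{thm:Main} and the preceding lemmas.

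First I would invoke Theorem~\ref{thm:Main} with a concrete valid triple. The triple $(p,p',q) = (5,2,2)$ satisfies \eqref{eq:Nice}, since $5 = 2\cdot 2 + 1$, we have $1 < 2 \leq 2$, and $\gcd(2,5) = 1$, and moreover $p = 5$ is prime. Theorem~\ref{thm:Main} then yields a Toeplitz subshift $X$ whose endomorphism monoid is isomorphic to $A(5,2) = \langle (5/2)^i \;|\; i \in \N \rangle$. Concretely, one takes $w = 1\bla 0\bla 0$ as exhibited in the standing assumptions, so $X = X_{1\bla 0\bla 0}$.

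Next I would note two things. Since $X$ is a Toeplitz subshift, it is by definition generated by a Toeplitz point, hence minimal, giving the minimality in the statement. And since the endomorphism monoid $A(5,2)$ is already a group (every element $(5/2)^i$ has the additive inverse $-(5/2)^i$, which is realized by the inverse map $\sigma_i^{-1}$), the automorphism group coincides with the endomorphism monoid and equals $A(5,2)$. Finally, I would appeal to the lemma showing that $A(n,m)$ is not finitely generated whenever $m \nmid n$: here $m = 2$ does not divide $n = 5$, so $A(5,2)$ is not finitely generated.

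There is essentially no main obstacle here, as the corollary is a direct specialization of Theorem~\ref{thm:Main}; the only care needed is to confirm that the chosen triple meets all the hypotheses of \eqref{eq:Nice} and the primality requirement, and to make explicit that the endomorphism monoid being a group means the automorphism group is the same object. The proof therefore amounts to a short citation chain: Theorem~\ref{thm:Main} produces the subshift and computes its automorphism group as $A(5,2)$, and the non-finite-generation lemma finishes the argument.

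\begin{proof}
The triple $(p,p',q) = (5,2,2)$ satisfies \eqref{eq:Nice} and has $p = 5$ prime, so Theorem~\ref{thm:Main} provides a Toeplitz subshift $X$ (for instance $X = X_{1\bla 0\bla 0}$) whose endomorphism monoid is isomorphic to $A(5, 2)$. Being Toeplitz, $X$ is minimal. The group $A(5, 2)$ is generated by the invertible maps $\sigma_i$, so the endomorphism monoid is in fact a group and coincides with the automorphism group. Since $2 \nmid 5$, the group $A(5, 2)$ is not finitely generated.
\end{proof}
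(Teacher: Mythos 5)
Your proof is correct and follows exactly the route the paper intends: the corollary is stated without proof as an immediate consequence of Theorem~\ref{thm:Main} (applied, e.g., to $(p,p',q)=(5,2,2)$), combined with the earlier lemma that $A(n,m)$ is not finitely generated when $m \nmid n$ and the fact that Toeplitz subshifts are minimal. Your additional remark that the endomorphism monoid, being (isomorphic to) a group, coincides with the automorphism group is a worthwhile explicit step that the paper leaves implicit.
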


\subsection{Some comparisons and additional information}

We give some comparisons with existing literature and answer some natural questions about the subshift.

First, we discuss the approach of \cite{Co71} for attacking the automorphism group, which is the main tool also in the more recent papers about automorphism groups \cite{Ol13,DoDuMaPe14}: this is the study of the automorphism group through the induced group on the maximal equicontinuous factor.

Toeplitz subshifts are \emph{almost 1-1 extensions} of their maximal equicontinuous factors, that is, the factor map always has at least one singleton fiber, and thus a dense set of such fibers \cite{Su84}. In such a case, every automorphism is uniquely determined by the map it induces on the maximal equicontinuous factor, see for example Proposition 1 of \cite{Ol13}. The maximal equicontinuous factor of our example $X$ is the $5$-adic adding machine $Y$, and the endomorphisms of such a system are easily seen to be additions of $5$-adic integers. Thus, to compute the endomorphism monoid of our example, one only needs to compute the $5$-adic numbers $c$ such that the endomorphism $y \mapsto y+c$ of the maximal equicontinuous factor lifts to a continuous map on $X$.

The group of such $c$ is indeed precisely the subgroup of the rationals shown in the abstract, considered as a subgroup of the group of $5$-adic integers. By the algebraic properties of the $n$th order shift maps we define, these numbers $c$ must be precisely those obtained as finite $\Z$-linear combinations of the $5$-adic numbers $1$, $\frac{5}{2} = \ldots 22230$, $\left(\frac{5}{2}\right)^2 = \ldots 1113400$ and so on. The special structure of the subshift guarantees that such maps on $Y$ lift to continuous maps on $X$, but no other maps do. Unlike, for example, \cite{Co71,Ol13}, we do not work out the automorphism group by studying the fibers of this factor map, but work directly with the $n$th order shift maps. This seems like a natural approach for this particular subshift: while the $5$-adic expansions of these numbers look somewhat complicated, the $n$th order shift maps satisfy natural relations (that is, they form a lifting group).

One might ask whether there is a simple way to see what the automorphism group of our example is through the study of the set of fibers. We do not know whether this is the case, but mention some simple observations. First, it is a simple consequence of Lemma~\ref{lem:Sparse} that in our example, the fibers of the factor map to the maximal equicontinuous factor are all of cardinality $1$ or $2$. The set of fibers with cardinality $2$ changes in a somewhat complicated way whenever the holes in $w$ are moved, but the automorphism group is not affected by such movements, in the sense we have much freedom in the choice of $w$ in the proof of Theorem~\ref{thm:Main}, for large $p$ and $q$, but the automorphism group is a function of $p$ and $q$ only.

We now discuss the connection between our example and the result of \cite{CyKr14} stating that a transitive subshift whose language has a subquadratic growth has a periodic automorphism group, where a group $G$ is \emph{periodic} if
\[ \forall g \in G: \exists n: g^n = 1.\]
Namely, we show that our Toeplitz subshift has subquadratic growth.


\begin{lemma}
Suppose $T : \R \to \R$ is nondecreasing and $T(n) \leq aT(n/b + c)$ for all large enough $n$, where $a \geq 1, b > 1, \log_b a > 1$ and $c \in \N$. Then $T(n) = O(n^{\log_b a})$.
\end{lemma}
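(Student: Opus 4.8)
The plan is to prove this by a standard Master-Theorem-style recurrence unrolling. The hypothesis $T(n) \leq a T(n/b + c)$ is a divide-and-conquer recurrence with an additive perturbation $c$ inside the argument, and the conclusion $T(n) = O(n^{\log_b a})$ is the ``leaf-dominated'' case where the exponent $\log_b a > 1$ dominates. Since there is no additive work term outside the recursive call (the right-hand side is purely $a$ times a single recursive value), the cost is governed entirely by the number of leaves in the recursion tree, which grows like $a^{\text{depth}}$, and the depth to shrink $n$ down to a constant is about $\log_b n$.

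First I would absorb the $+c$ perturbation by a change of variables. The trouble with $n/b + c$ is that it is not a clean geometric contraction, so I would pick a constant $c' > 0$ large enough that the substitution $S(n) = T(n + c')$ (or equivalently shifting to the variable $n - \frac{bc}{b-1}$, the fixed point of the map $n \mapsto n/b + c$) turns the argument bound into a genuine multiplicative one of the form $S(n) \leq a\, S(n/b)$ for all large $n$. Concretely, with $d = \frac{bc}{b-1}$ the affine map $x \mapsto x/b + c$ fixes $d$, so writing everything relative to $d$ gives $(n/b + c) - d = (n-d)/b$, which is exactly the contraction I want on the shifted variable.

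Next I would unroll the clean recurrence $S(n) \leq a\, S(n/b)$. Iterating $k$ times gives $S(n) \leq a^k S(n/b^k)$, and I would choose $k \approx \log_b n$ so that $n/b^k$ falls below the threshold $N_0$ beyond which the hypothesis holds, at which point $S(n/b^k)$ is bounded by a constant $M = \max_{x \leq N_0} S(x)$ (finite because $T$, hence $S$, is nondecreasing and we only need the supremum over a bounded interval). This yields $S(n) \leq M \cdot a^{\log_b n} = M \cdot n^{\log_b a}$, using $a^{\log_b n} = n^{\log_b a}$. Since $S$ and $T$ differ only by a bounded horizontal shift and $T$ is nondecreasing, this transfers back to $T(n) = O(n^{\log_b a})$.

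The main obstacle I expect is purely bookkeeping rather than conceptual: handling the fact that $n/b^k$ need not be an integer and that the recurrence hypothesis is only assumed ``for all large enough $n$.'' Monotonicity of $T$ is the tool that smooths both issues, since it lets me replace $n$ by the nearest convenient value and bound $T$ on the initial segment by a single constant without worrying about exact arithmetic. The hypotheses $a \geq 1$ and $b > 1$ ensure the unrolling is well-behaved (the argument genuinely shrinks and the multiplier does not vanish), and $\log_b a > 1$ is not even strictly needed for the $O(n^{\log_b a})$ bound itself -- it is presumably recorded because the intended application compares this exponent against the quadratic $n^2$ barrier, giving the subquadratic complexity claim for the subshift.
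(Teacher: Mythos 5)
Your proposal is correct and follows essentially the same route as the paper: unroll the recurrence about $\log_b n$ times and use monotonicity of $T$ to bound the value at the bottom by a constant, giving $T(n) \leq a^{\log_b n} \cdot O(1) = O(n^{\log_b a})$. The only difference is cosmetic bookkeeping: you absorb the $+c$ perturbation by shifting to the fixed point $\frac{bc}{b-1}$ of $x \mapsto x/b + c$, whereas the paper keeps the perturbation and tracks the accumulated geometric series $c + c/b + c/b^2 + \cdots$, bounding it by the same constant.
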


\begin{proof}
We have
\[ T(n) \leq aT(n/b + c) \leq a^2 T(n/b^2+c/b+c) \leq a^3 T(n/b^3+c/b^2+c/b+c) \leq \cdots, \]
so that $T(n) \leq a^k T\left(n/b^k + c\frac{1-b^{-k}}{1-b^{-1}}\right)$ for all $k$. Setting $k = \log_b n$ we have
\[ T(n) \leq a^{\log_b n} T\left(1 + c\frac{1-b^{-k}}{1-b^{-1}}\right) = O(n^{\log_b a}). \]
\end{proof}

\begin{lemma}
For $w = 1\bla0\bla0$, $X_w$ has subquadratic growth.
\end{lemma}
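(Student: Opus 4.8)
The plan is to bound the complexity function $P(n)$, the number of length-$n$ words appearing in $X_w$, by setting up a recursion that exploits the self-similar substitution structure already established for $X = X_{1\bla0\bla0}$, and then to apply the Master-Theorem-style lemma proved immediately above (the one concluding $T(n) = O(n^{\log_b a})$). Since $p = 5$ and $q = 2$ here, the skeleton $\ske(p^j, x)$ occupies a $\frac{3}{5}$-fraction of coordinates at each level and the remaining $\frac{2}{5}$-fraction of ``hole'' coordinates carries a shifted copy of $x(w)$ itself; this is exactly the mechanism that should produce a recursion of the form $P(n) \leq a\,P(n/b + c)$ with $b = 5/2$ and $a$ chosen so that $\log_b a = \log_{5/2} a \approx 1.757$, matching the exponent $O(n^{1.757})$ claimed in the introduction.

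First I would fix a window of length $n$ and ask how a length-$n$ word $u \sqsubset X_w$ is determined. By Lemma~\ref{lem:ContinuityThing}, the positions of the level-$1$ skeleton $w^\Z = (1\bla0\bla0)^\Z$ are locally detectable, so within any window the skeleton symbols (three out of every five positions, in a fixed phase depending on the window's position modulo $5$) are forced once the phase is known. The remaining $\bla$-coordinates are filled, in order, by a subword of $x(w)$ via the substitution map $\psi_w$: concretely, a window of length $n$ sees roughly $\lceil 2n/5 \rceil$ hole-coordinates, and these are a contiguous length-$\approx 2n/5$ factor of $x(w)$ (up to a bounded additive error from boundary effects and the choice of phase). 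Thus a length-$n$ word is specified by (i) a bounded amount of phase data (the position modulo $5$, giving a constant factor) and (ii) a factor of length about $2n/5 = n/(5/2)$ of the same subshift. This yields
\begin{equation}
\label{eq:ComplexityRecursion}
P(n) \leq C \cdot P\!\left(\frac{n}{5/2} + c\right)
\end{equation}
for suitable constants $C$ and $c$, where the factor $C$ absorbs the finitely many phases together with the bounded boundary ambiguity.

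The remaining step is to pin down the constant $C$ so that $\log_{5/2} C < 2$ — indeed equal to the advertised $1.757$ — and then invoke the lemma above with $a = C$, $b = 5/2$, $c$ as in \eqref{eq:ComplexityRecursion}. Since $(5/2)^{1.757} \approx 5$, I expect $C$ to come out around $5$ (five phases, each contributing one descendant word, with boundary effects folded into the additive constant $c$ rather than into $a$); one checks $\log_{5/2} 5 = \ln 5/\ln 2.5 \approx 1.756$, which is subquadratic and matches the stated figure. The lemma's hypotheses ($a \geq 1$, $b > 1$, $\log_b a > 1$) are then all satisfied, and monotonicity of $P$ is immediate since longer windows determine at least as many words, so $T = P$ qualifies.

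The main obstacle is the careful accounting in step (ii): I must verify that the hole-coordinates within a window really do form an honest contiguous factor of $x(w)$ of controlled length, and that the number of admissible (phase, descendant-word) pairs is bounded by a clean constant times $P(n/(5/2)+c)$ rather than, say, a growing number of phases or an uncontrolled boundary overlap. The substitution $\psi_w$ satisfies $\sigma^{kp}\circ\psi_w = \psi_w\circ\sigma^{kq}$ (Lemma~\ref{lem:PhiContinuity}), which is precisely the scaling identity that makes the factor of $x(w)$ seen through the holes well-defined and lets the $n \mapsto 2n/5$ contraction go through; the delicate part is bookkeeping the additive slack $c$ so that \eqref{eq:ComplexityRecursion} holds for all large $n$ with a single constant $C$, after which the asymptotic is purely mechanical.
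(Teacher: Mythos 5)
Your proposal is correct and follows essentially the same route as the paper: decompose each length-$n$ word into its phase modulo $5$ (at most $5$ choices, unique for long words since the skeleton is locally detectable) plus the factor of $X_w$ of length about $\frac{2n}{5}$ carried in the hole-coordinates, yielding the recursion $P(n) \leq 5\,P\bigl(\lceil 2n/5\rceil + c\bigr)$, and then apply the preceding Master-Theorem-style lemma with $a = 5$, $b = 5/2$ to get $O(n^{\log_{5/2} 5}) = O(n^{1.757})$. The paper carries out exactly this bookkeeping, taking the additive constant $c = 10$ as a crude bound on the boundary effects you flag as the delicate point.
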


\begin{proof}
Let us count the number $n(k)$ of words of length $k$ that occur in $X_w$ for large $k$. Let $\chi$ have right radius $r$, so that a word of length $r+1$ has a unique phase, in the sense that it matches a unique subpattern of $(1 \bla 0 \bla 0)^\Z$ in a unique way. There are $5$ possible phases, so if $k > r$ there are at most $5 \ell$ words of length $n$, where $\ell$ is the number of words of length $\lceil\frac{2k}{5}\rceil + 10$ (a trivial upper bound for the number of holes left after filling the skeleton), that is,
\[ n(k) \leq 5 n\left(\left\lceil\frac{2k}{5}\right\rceil + 10\right). \]

By the previous lemma, setting $a = 5$, $b = \frac{5}{2}$ and $c = 10$, we have
\[ n(k) = O(n^{\log_b a}) = O(k^{1.757}). \]
\end{proof}

Since $X_w$ has subquadratic growth, the result of \cite{CyKr14} should hold, and indeed it does. Namely, the automorphism group we obtained is periodic when the shift maps are quotiented out: the automorphism group of $X_w$ is isomorphic to $A(p,q)$ where the subgroup $\Z$ corresponds to the shift maps. Since the group is abelian, and for the generators $\left(\frac{5}{2}\right)^i$ we have $2^i\left(\frac{5}{2}\right)^i \in \Z$ for all $i$, the group is of the required form. In particular, this shows that while the subshifts of the type considered in \cite{CyKr14} always have a periodic automorphism group up to powers of the shift, they need not have a finite automorphism group up to the shift.

\section*{Acknowledgements}

The author would like to thank Luca Zamboni for suggesting the study of automorphism groups of Toeplitz subshifts, Ilkka T\"orm\"a for many discussions on the topic, and Pierre Guillon and Jarkko Kari for detailed comments on the chapter of \cite{Sa14} corresponding to the present article. The author was partially supported by the Academy of Finland Grant 131558, CONICYT Proyecto Anillo ACT 1103, by Basal project CMM, Universidad de Chile and by FONDECYT Grant 3150552.

\bibliographystyle{plain}
\bibliography{../../bib/bib}{}

\end{document}